\newcommand{\E}{\mathbf E}
\newcommand{\R}{\mathbb R}
\newcommand{\sB}{\mathcal B}
\newcommand{\sK}{\mathcal K}
\newcommand{\sI}{\mathcal I}
\newcommand{\sF}{\mathcal F}
\newcommand{\sG}{\mathcal G}
\newcommand{\sM}{\mathcal M}
\newcommand{\bM}{\mathbf M}
\newcommand{\MM}{\mathbb M}
\newcommand{\eps}{\varepsilon}
\newcommand{\USC}{\mathsf{USC}_0}
\newcommand{\contf}{\mathsf{C}_0}
\newcommand{\ind}{\mathds{1}}
\newcommand{\eqd}{\overset{d}{\sim}}
\newcommand{\Prob}[1]{\mathbf{P}\left\{#1\right\}}
\renewcommand{\P}{\mathbf{P}}
\newcommand{\salg}{\mathfrak{F}}
\newcommand{\sme}{\mathfrak{S}}
\newcommand{\smi}{\mathfrak{S}_{\mathrm{ind}}}
\newcommand{\carrier}{\mathbb E}
\newcommand{\cpc}{\phi}
\theoremstyle{plain} \newtheorem{theorem}{Theorem}[section]
\theoremstyle{plain} \newtheorem{proposition}[theorem]{Proposition}
\theoremstyle{plain} \newtheorem{lemma}[theorem]{Lemma}
\theoremstyle{plain} \newtheorem{corollary}[theorem]{Corollary}
\theoremstyle{definition} \newtheorem{definition}[theorem]{Definition}
\theoremstyle{definition} 
\theoremstyle{remark} \newtheorem{remark}[theorem]{Remark}
\theoremstyle{remark} \newtheorem{example}[theorem]{Example}
\let\phi\varphi
\newlength{\querylen}
\tikzset{
  b arrow/.style={
    decoration={markings,mark=at position 1 with {\arrow[scale=2]{>}}},
    postaction={decorate}}
}
\tikzset{
  bb arrow/.style={
    decoration={markings,
    mark=at position 0.1 with {\arrow[scale=2]{<}}},
    decoration={markings,
    mark=at position 1 with {\arrow[scale=2]{>}}},
    postaction={decorate}}
}
\begin{document}

\title[Max-stable random sup-measures]{Max-stable random sup-measures\\ with comonotonic tail dependence}

\author[I. Molchanov]{Ilya Molchanov}
\address{University of Bern \\
 Institute of Mathematical Statistics and Actuarial Science \\
 Sidlerstrasse 5 \\
 CH-3012 Bern \\
 SWITZERLAND}
\email{ilya.molchanov@stat.unibe.ch}

\author[K. Strokorb]{Kirstin Strokorb}
\address{University of Mannheim \\
 Institute of Mathematics\\
 A5, 6 B 116\\
 D-68131 Mannheim\\
 GERMANY }
\email{strokorb@math.uni-mannheim.de}

\thanks{IM supported in part by Swiss National Science Foundation
  grant 200021-153597.}

\subjclass[2010]{60G70 60D05 60G57 91B16}

\keywords{capacity; Choquet integral; Choquet random sup-measure; comonotonic additive functional; complete alternation; continuous choice; extremal coefficient; extremal integral; LePage series; max-stability; random set; sup-measure; tail dependence}

\date{\today}

\begin{abstract}
  Several objects in the Extremes literature are 
  special instances of max-stable random sup-measures.
  This perspective opens connections to the theory of random sets
  and the theory of risk measures and makes it possible to
  extend corresponding notions and results from the literature 
  with streamlined proofs. In particular, it clarifies the role of 
  Choquet random sup-measures and their stochastic dominance property. 
  Key tools are the LePage representation of a max-stable random sup-measure
  and the dual representation of its tail dependence functional.  
  Properties such as complete randomness, 
  continuity, separability, coupling, continuous choice,
  invariance and transformations are also analysed.
\end{abstract}

\maketitle

\vspace{10mm}
\section{Introduction}

{Random sup-measures} provide a unified framework for dealing with a
number of objects that naturally appear in the Extremes literature
including temporal extremal processes \cite{obr:tor:ver90,lac:sam15},
continuous choice models \cite{res:roy91} or extremal loss in portfolios
\cite{yuen:st14}; \mbox{$\alpha$-Fr{\'e}chet} sup-measures are the
building blocks of max-stable processes \cite{stoev:taq05}. In
general, any stochastic process with upper semicontinuous paths can be
viewed as a random sup-measure 
\cite{obr:tor:ver90,nor86,ver97}. That is, the suprema
of the process over sets yield a random sup-measure, while the values of the random sup-measure at singletons yield the upper semicontinuous process. The max-stability property of the process immediately translates into the same property
of the random sup-measure.

Surprisingly, the notion of capacities and sup-measures has almost
vanished from the theoretical developments on extremal processes over
the past 20 years.
This paper aims to clarify, extend and complement a number of results  
from the unifying perspective of sup-measures and capacities with
streamlined proofs and connections to the theory of random sets and
utility functions (or risk measures).  The necessary preliminaries
concerning capacities, random closed sets, random sup-measures, Choquet and
extremal integrals are presented in Section~\ref{sec:capac-sup-meas}.

Section~\ref{sec:max-stable-random} introduces \emph{max-stable random
sup-measures} $X$ on a carrier space $\carrier$ and their 
\emph{tail dependence functional} $\ell$, which
are the central objects in this paper.  They are natural
generalisations of max-stable random vectors and their (stable) tail
dependence functions. For a given function $f$ on $\carrier$, the tail
dependence functional $\ell(f)$ characterises the distribution of the
extremal integral of $f$ with respect to $X$ and so uniquely
determines the distribution of the random sup-measure $X$. The values
$\ell(\ind_K)$ of $\ell$ on indicator functions $\ind_K$ are called
\emph{extremal coefficients} and denoted by $\theta(K)$.

Generalising \cite{res13,res11m,mo08e} we give a \emph{complete
  characterisation} of the tail dependence functional as an upper
semicontinuous homogeneous max-completely alternating functional
and of the extremal coefficient functional as an upper semicontinuous
union-completely alternating functional.
Motivated by the family of stochastic processes studied in
\cite{str:sch15} and characterised by the fact that their
distributions are in a one-to-one correspondence with extremal coefficient functionals, we identify the family of max-stable random sup-measures that have the same property.  While in \cite{str:sch15} such processes were called
Tawn--Molchanov processes (TM processes), 
here we call their sup-measure analogues \emph{Choquet random sup-measures (CRSMs)}. The key argument relies on the fact that the \emph{comonotonic additivity} property of the 
tail dependence functional $\ell$ ensures that
$\ell$ equals the Choquet integral with respect to $\theta$, and so
the distribution of the random sup-measure is uniquely determined by
$\theta$. 
This observation clarifies a number of properties of TM
processes from \cite{str:sch15} and establishes connections with 
the studies of coherent risk
measures that also appear as such Choquet
integrals, see \cite{delb12,foel:sch04}.
The following graph illustrates the one-to-one correspondence between 
extremal coefficient functionals and CRSMs, cf.\
\cite{str:sch15}.

\begin{center}
\begin{tikzpicture}[>=latex']
  \tikzstyle{block} = [inner sep= 1mm, rectangle, draw=black, semithick, text centered]
  \node[block,draw] (nodeA)           
       {\begin{minipage}{4.8cm}
           \centering 
           $\phantom{a}$\\[-2mm] (Laws of) max-stable\\ random sup-measures $X$\\[2mm]
           \begin{minipage}{4.6cm}
             \centering
             \begin{tikzpicture}
               \node[block,draw] {\begin{minipage}{4.4cm}
                   \centering
                   $\phantom{a}$ \\[-2mm] (Laws of) Choquet random sup-measures \\[-2mm] $\phantom{a}$ \end{minipage}};
             \end{tikzpicture}
           \end{minipage}\\[-1mm] 
           $\phantom{a}$
       \end{minipage}};
       \node[block,draw,node distance=5.8cm] (nodeB) [right of=nodeA]           
            {\begin{minipage}{3.8cm}
                \centering 
                $\phantom{a}$\\[-2mm] Tail dependence\\ functionals $\ell$ \\[2mm] 
                \begin{minipage}{3.6cm}
                  \centering
                  \begin{tikzpicture}
                    \node[block,draw] {\begin{minipage}{3.3cm}
                        \centering
                        $\phantom{a}$ \\[-2mm] Comonotonic additive {$\ell$}'s \\[-2mm] $\phantom{a}$ \end{minipage}};
                  \end{tikzpicture}
                \end{minipage}\\[-1mm] 
                $\phantom{a}$
            \end{minipage}};
            \node[block,draw,node distance=5cm] (nodeC) [right of=nodeB] 
                 {\begin{minipage}{3.1cm}
                     \centering 
                     $\phantom{a}$\\[-2mm]
                         {Extremal}\\{coefficient}\\
                         {functionals} $\theta$\\[-2mm]
                         $\phantom{a}$
                 \end{minipage}};
                 \draw[bb arrow,thick] (nodeA.east)+(0.22,0.4) to node[auto] {1:1\phantom{a}} +(1.3,0.4);
                 \draw[b arrow,thick] (nodeB.east)+(0,0.4) to  +(1.35,0.4);
                 \draw[b arrow,thick] (nodeC.west)+(0,-0.6) to 
                 node[below] {} node[below] {1:1} +(-1.6,-0.6);
                 \draw[bb arrow,thick] (nodeB.west)+(0.1,-0.6) to 
                 node[below] {} node[below] {\phantom{a}1:1} +(-1.5,-0.6);
\end{tikzpicture} 
\end{center}

The classical \emph{LePage series} representation
\cite{lep:wood:zin81} asserts that a symmetric stable random vector
equals in distribution the sum of i.i.d.\ integrable random vectors
scaled by successive points of the unit intensity Poisson process on
the positive half-line. Its variant for max-stable processes is
derived in \cite{dehaan84}. In Section~\ref{sec:seri-repr-stable}, we
derive such a representation of a general max-stable random sup-measure 
as the maximum of i.i.d.\ copies of a random sup-measure scaled by successive
Poisson points. The difficulty lies in the absence of a norm and a reference 
sphere in the space of (locally finite) sup-measures.
Subsequently, CRSMs are characterised by the fact that the
i.i.d.\ summands become scaled indicator random sup-measures.

Section~\ref{sec:dual-representation} provides the \emph{dual representation} 
of the general and CRSM tail dependence functionals
as supremum over the Lebesgue integrals with respect a certain family
of Radon measures.  In the CRSM case, this family has an
interpretation as distributions for selections of a random closed set.
Such dual representations are related to dual representations of coherent
risk measures in mathematical finance.
For random vectors (when the carrier space $\carrier$ is finite), 
these families of measures are convex bodies that were called 
dependency sets or max-zonoids in \cite{mo08e} and \cite{str:sch15}.
Among all tail dependence functionals with fixed values on indicator
functions (that is, with fixed extremal coefficients), 
the CRSM tail dependence functional is the largest one.

Random sup-measures with independent values on disjoint sets are
called \emph{completely random} or having independent peaks
\cite{nor86,stoev:taq05}. They are now well understood including the
corresponding integration theory that relies on the concept of the
\emph{extremal integral} \cite{stoev:taq05}. The distribution of a
max-stable completely random sup-measure is completely identified by
its \emph{control measure}, similarly to the situation with
conventional $\alpha$-stable completely random measures studied in
details in \cite{sam:taq94}.  In the completely random case, the tail
dependence functional $\ell(f)$ equals the Lebesgue integral $\int f
d\mu$ with respect to the control measure $\mu$ and so it is
comonotonic additive.
Therefore, max-stable completely random sup-measures belong to the family
of CRSMs.  In Section~\ref{sec:complete-randomness} it is shown that,
conversely, a CRSM can always be realised as a max-stable completely random
sup-measure if uplifted to the (much richer) space of all closed sets.

Section~\ref{sec:max-stable-processes} addresses max-stable processes
that appear by taking the values of max-stable random sup-measures at singletons and their \emph{separability properties}. It also characterises the stochastic
continuity of a CRSM and the corresponding TM process. 
Section~\ref{sec:ordering-coupling} deals with \emph{coupling} of general max-stable random sup-measures with CRSMs. 
In particular, by means of an appropriate coupling, it is possible to recover the independence of the argmax-set of a max-stable random sup-measure from its
maximal value (and their distributions) in a streamlined proof and in
a broader setup compared to the separable \emph{continuous choice}
models on compact spaces in \cite{res:roy91}.

Finally, Section~\ref{sec:invar-transf} elaborates on further properties of both general max-stable random sup-measures and CRSMs related to
\emph{transformations} of their distributions \emph{using Bernstein functions},
\emph{rearrangement invariance} that corresponds to the law invariance
property of risk measures in finance and is related to exchangeability
properties, \emph{stationarity} and \emph{self-similarity}.
Several \emph{examples} of CRSMs are presented in Section~\ref{sec:examples-tm-sup}, in particular, related to the recent study of random sup-measures in \cite{lac:sam15}.

\vspace{10mm}
\section{Capacities, random sets and random sup-measures}
\label{sec:capac-sup-meas}

Let $\carrier$ be a locally compact Hausdorff second countable space,
that we often assume to be the line $\R$ or the Euclidean space
$\R^d$. Denote by $\sK$, $\sF$, $\sG$, and $\sB$ respectively the
families of compact, closed, open, and Borel sets in $\carrier$.

A \emph{(Choquet) capacity} $\cpc$ is a non-decreasing function $\cpc$
on the family of all subsets of $\carrier$ with values in
$[0,\infty]$, such that $\cpc(\emptyset)=0$,
$\cpc(A_n)\uparrow\cpc(A)$ if $A_n\uparrow A$ (inner regularity), and
$\cpc(K_n)\downarrow\cpc(K)$ for compact sets $K_n\downarrow K$ (upper
semicontinuity on compact sets), see \cite[Appendix~A.II]{doob84} and
\cite[Appendix~E]{mo1}. It is assumed throughout that all capacities
take finite values on compact sets. 
The capacity $\cpc$ is said to be
\emph{finite} if $\cpc(\carrier)<\infty$ and \emph{normalised} if
$\cpc(\carrier)=1$.\\

\paragraph{\textbf{\upshape Complete alternation.}}

A set-function $\cpc:\sK \rightarrow [0,\infty]$ is said to be \emph{completely alternating} on $\sK$ if the following recursively defined functionals
\begin{align*}
  \Delta_{K_1}\cpc(K)&=\cpc(K)-\cpc(K\cup K_1)\,,\\
  \Delta_{K_n}\cdots\Delta_{K_1}\cpc(K)
  &=\Delta_{K_{n-1}}\cdots\Delta_{K_1}\cpc(K) 
  -\Delta_{K_{n-1}}\cdots\Delta_{K_1}\cpc(K\cup K_n)
\end{align*}
are non-positive for all $n\geq1$ and all $K,K_1,\dots,K_n\in\sK$, see
\cite[Def.~1.1.8]{mo1}. This definition corresponds to the complete
alternation of the set-function $\cpc$ on $\sK$ considered a
semigroup with the union operation, see \cite{ber:c:r}. In particular,
the non-positivity of $\Delta_{K_1}\cpc(K)$ is equivalent to the
monotonicity of $\cpc$; together with the non-positivity of
$\Delta_{K_2}\Delta_{K_1}\cpc(K)$ they identify strongly
subadditive (or concave) set-functions. 
If a set-function $\cpc$ on $\sK$ is strongly subadditive and upper semicontinuous,  then it can be consistently extended to a Choquet capacity on the
family of all subsets of $\carrier$, see \cite[Th.~A.II.7]{doob84}.\\

\paragraph{\textbf{\upshape Random closed sets.}}

A \emph{random closed set} $\Xi$ in $\carrier$ is a measurable map
from a probability space $(\Omega,\salg,\P)$ to $\sF$ endowed with the
$\sigma$-algebra generated by the family
\begin{align*}
  \sF_K=\{F\in\sF : \; F\cap K\neq\emptyset\}\,,\quad K\in\sK.
\end{align*}
The \emph{Choquet theorem} from the theory of random sets (see
\cite[Sec.~1.2]{mo1} and \cite[Th.~6.6.19]{ber:c:r}) modified for not
necessarily finite capacities as in \cite[Th.~2.3.2]{sch:weil08}
states that $\cpc$ is a completely alternating upper semicontinuous
capacity if and only if there exists a unique locally finite measure
$\nu_\cpc$ on the family $\sF'=\sF\setminus\{\emptyset\}$ of non-empty
closed sets such that
\begin{align}
  \label{eq:choquet-th}
  \nu_\cpc(\sF_K)=\cpc(K)\,,\quad K\in\sK\,.
\end{align}
If $\cpc$ is normalised, then $\nu_\cpc$ is a probability
measure on $\sF'$. This probability measure is the distribution of a
random closed set $\Xi$ in $\carrier$ such that
\begin{align*}
  \Prob{\Xi\cap K\neq\emptyset}=\cpc(K)\,,\quad K\in\sK,
\end{align*}
and $\cpc$ is then called the \emph{capacity functional} of $\Xi$. \\

\paragraph{\textbf{\upshape Maxitive capacities.}}

A capacity $\cpc$ is called a \emph{sup-measure} if 
\begin{align*}
  \cpc(\cup_{j\in J} G_j)=\sup_{j\in J} \cpc(G_j)
\end{align*}
for any family $\{G_j, j\in J\}$ of open sets. This is the case if and
only if $\cpc$ is obtained as the extension of an upper semicontinuous
set-function on $\sK$ such that 
\begin{align}
  \label{eq:maxitive}
  \cpc(K_1\cup K_2)=\cpc(K_1)\vee \cpc(K_2),\qquad K_1,K_2\in\sK,
\end{align}
and so $\phi$ is called \emph{maxitive} on $\sK$, see
\cite{nor86}. Note that $\vee$ denotes the maximum operation, for
random vectors it denotes the coordinatewise maximum, and for
functions their pointwise maximum. Each maxitive capacity $\cpc$ is
completely alternating, see \cite[Th.~1.1.17]{mo1}, and
\begin{align}
  \label{eq:sup-der}
  \cpc(K)=\sup\{g(x):\; x\in K\}, \qquad K\in\sK,
\end{align}
for an upper semicontinuous function $g$, see \cite[Prop.~1.1.16]{mo1}
and \cite[Th.~2.5]{obr:tor:ver90}.  The right-hand side of
\eqref{eq:sup-der} is denoted by $g^\vee(K)$ and is called the
\emph{sup-integral} of $g$, while the function $g(x)=\cpc(\{x\})$,
$x\in\carrier$, is the \emph{sup-derivative} of $\cpc$, see e.g.\
\cite{obr:tor:ver90}.  A particularly important maxitive capacity is
the \emph{indicator capacity} $\cpc(K)=\ind_{F\cap K\neq\emptyset}$
for any fixed $F\in\sF$. \\

\paragraph{\textbf{\upshape Choquet and extremal integrals.}}

The \emph{Choquet integral} of a function
$f:\carrier\mapsto\R_+=[0,\infty)$ with respect to a capacity $\cpc$
is defined by
\begin{align}
  \label{eq:choquet-int}
  \int fd\cpc=\int_0^\infty \cpc(\{f\geq t \})d t \,,
\end{align}
where $\{f\geq t \}=\{x \in \carrier:\; f(x)\geq t \}$, see
\cite{den94} and \cite[Sec.~5.1]{mo1}. If $\cpc$ is a measure and $f$
is a measurable non-negative function, this integral coincides with
the Lebesgue integral. 
Furthermore,
\begin{equation}
  \label{eq:2.1}
  \int fd\cpc=\int_0^\infty \cpc(\{f>t\})dt \,,
\end{equation}
since the function $\cpc(\{f\geq t \})$ is monotone in $t$, and so has
at most a countable number of discontinuities if $\cpc(\{f\geq t \})$
is finite for all $t$, while otherwise the both sides are infinite,
see \cite[Eq.~(6)]{ger96v} and \cite[Th.~42, p.~123]{delb12}.

The \emph{extremal integral} 
\begin{align}
  \label{eq:int-gerritse}
  \int^e fd\cpc=\sup\{\cpc(K)\inf_{x\in K} f(x):\; K\in\sK\}
\end{align}
was introduced in \cite{ger96v} in view of applications
to the theory of large deviations. It is shown in
\cite[Prop.~3]{ger96v} that
\begin{align}
  \label{eq:sup-integral-strict}
  \int^e fd\cpc=\sup_{t > 0} t\, \cpc(\{f\geq t \})
  =\sup_{t > 0} t\, \cpc(\{f> t \})\,.
\end{align}
If $\cpc=g^\vee$ is the sup-integral of an upper semicontinuous
function $g$, then
\begin{align*}
  \int^e fdg^\vee =\sup_{x \in \carrier} f(x)g(x)\,,
\end{align*}
which justifies calling this integral the extremal one. In particular,
if $\cpc$ is a sup-measure and $f=\bigvee_{i=1}^n a_i\ind_{A_i}$, then
\begin{equation}
  \label{eq:2.2}
  \int^e f d\cpc =\max_{i=1,\dots,n} a_i\cpc(A_i)\,.
\end{equation}

By $\USC$ (respectively $\contf$) we denote the family of all
non-negative bounded upper semicontinuous (respectively continuous)
functions on $\carrier$ with relatively compact support $\{x \in
\carrier:\; f(x)\neq 0\}$.  Both the Choquet integral and extremal
integral are finite if the integrand belongs to $\USC$ or if the
integrand is bounded and $\cpc(\carrier)$ is finite.

\begin{lemma}
  \label{lemma:int-theta-nu}
  If $\cpc$ and $\nu_\cpc$ are related by \eqref{eq:choquet-th}, then,
  for each $f\in\USC$,
  \begin{align*}
    \int f d\cpc = \int f^\vee d\nu_\cpc
    \qquad \text{and} \qquad
    \int^e f d\cpc = \int^e f^\vee d\nu_\cpc .
  \end{align*}
\end{lemma}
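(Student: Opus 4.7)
The plan rests on a single measure-theoretic identity. For $f\in\USC$ and $t>0$, I claim
\[
\{F\in\sF': f^\vee(F)\geq t\}=\sF_{\{f\geq t\}}.
\]
Since $f$ is upper semicontinuous, the level set $L_t:=\{f\geq t\}$ is closed, and since $f$ has relatively compact support and $L_t\subset\overline{\{f\neq 0\}}$ for $t>0$, the set $L_t$ is in fact compact, so $\sF_{L_t}$ is a well-defined element of the generating $\sigma$-algebra on $\sF'$. The inclusion $\sF_{L_t}\subset\{f^\vee\geq t\}$ is trivial: a point $x\in F$ with $f(x)\geq t$ forces $f^\vee(F)\geq t$. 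For the reverse inclusion, suppose $f^\vee(F)\geq t>0$. Because $f$ vanishes off its compact support, $f^\vee(F)=\sup\{f(x):x\in F\cap\overline{\supp f}\}$, and the supremum of the upper semicontinuous function $f$ on the compact set $F\cap\overline{\supp f}$ is attained, yielding some $x_0\in F$ with $f(x_0)=f^\vee(F)\geq t$, hence $F\in\sF_{L_t}$.

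Combined with the defining property \eqref{eq:choquet-th} of $\nu_\cpc$, this identity gives
\[
\cpc(L_t)=\nu_\cpc(\sF_{L_t})=\nu_\cpc\bigl(\{F: f^\vee(F)\geq t\}\bigr), \qquad t>0.
\]
Substituting into definition \eqref{eq:choquet-int} of the Choquet integral and recognising the right-hand side as the layer-cake formula for the Lebesgue integral of $f^\vee$ against the measure $\nu_\cpc$ yields
\[
\int f\,d\cpc=\int_0^\infty \cpc(L_t)\,dt=\int_0^\infty\nu_\cpc\bigl(\{f^\vee\geq t\}\bigr)\,dt=\int f^\vee\,d\nu_\cpc.
\]
The extremal identity is then obtained by the same substitution into \eqref{eq:sup-integral-strict} applied first to $\cpc$ and then (in reverse) to the measure $\nu_\cpc$:
\[
\int^e f\,d\cpc=\sup_{t>0} t\,\cpc(L_t)=\sup_{t>0} t\,\nu_\cpc\bigl(\{f^\vee\geq t\}\bigr)=\int^e f^\vee\,d\nu_\cpc.
\]

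The only place where work is done is the verification of the set identity; once it is established, both halves follow by mechanical substitution. The main obstacle, such as it is, lies in using both hypotheses on $f$ (upper semicontinuity and compact support) to guarantee that the supremum defining $f^\vee(F)$ is attained, so that the strict/non-strict distinction between $\{f\geq t\}$ and $\{f^\vee\geq t\}$ collapses. Finiteness of both integrals (needed to avoid $\infty-\infty$ issues in intermediate steps) is already guaranteed by the remark preceding the lemma.
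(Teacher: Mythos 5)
Your proof is correct and follows essentially the same route as the paper, whose entire argument is the identity $\cpc(\{f\geq t\})=\nu_\cpc(\{F:\,F\cap\{f\geq t\}\neq\emptyset\})=\nu_\cpc(\{F:\,f^\vee(F)\geq t\})$ followed by substitution into the definitions of the two integrals. You merely spell out the verification of the second equality (compactness of the level sets and attainment of the supremum via upper semicontinuity), which the paper leaves implicit.
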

\begin{proof} 
  It suffices to note that 
  \begin{align*}
    \cpc(\{f\geq t \})
    =\nu_\cpc(\{F:\; F\cap\{f\geq t \} \neq \emptyset \})
    =\nu_\cpc(\{F:\; f^\vee(F) \geq t \})
  \end{align*}
  and to apply the definitions of the Choquet and extremal integrals.
\end{proof}

The following result is known for the Choquet integral from
\cite[Th.~43, p.~124]{delb12} and \cite[Ch.~8]{den94} and for the
extremal integral it follows from the upper semicontinuity 
of~$\cpc$.

\begin{lemma}
  \label{lemma:approx}
  If $f_n(x)\downarrow f(x)$ for all $x\in \carrier$, and
  $f,f_1,f_2,\ldots\in\USC$, then
  \begin{align*}
    \int f_n d\cpc\downarrow \int f d\cpc\quad
    \text{and}\quad \int^e f_n d\cpc\downarrow \int^e f d\cpc\quad 
    \text{as}\; n\to\infty\,.
  \end{align*}
  In particular, the values of the integrals on any $f \in \USC$ can
  be approximated by their values on step-functions that approximate
  $f$ from above. 
\end{lemma}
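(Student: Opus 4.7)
The plan is to separate the two integrals. The Choquet-integral claim reduces, via the layer-cake formula \eqref{eq:choquet-int}, to pointwise monotone convergence of $t \mapsto \cpc(\{f_n \geq t\})$ combined with dominated convergence. Indeed, for each fixed $t>0$ the compact sets $\{f_n \geq t\}$ decrease to $\{f \geq t\}$ (since $f_n,f\in\USC$ with compact support), so upper semicontinuity of $\cpc$ on compact sets yields $\cpc(\{f_n \geq t\}) \downarrow \cpc(\{f \geq t\})$; these are dominated by the $dt$-integrable $\cpc(\{f_1 \geq t\})$ since $\int f_1\,d\cpc<\infty$, and Lebesgue's dominated convergence concludes. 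This is really a rephrasing of the cited results.

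For the extremal integral, write $I_n = \int^e f_n\, d\cpc$, $I = \int^e f\, d\cpc$, and note $I_n \downarrow L \geq I$ by monotonicity. I intend to prove $L \leq I$ by contradiction: assume $L > I + 2\delta$ for some $\delta > 0$, and use \eqref{eq:sup-integral-strict} to pick $t_n > 0$ with $t_n\cpc(\{f_n \geq t_n\}) > I + \delta$. The $t_n$ are trapped in a compact subinterval of $(0,\infty)$: bounded above by $\sup f_1 < \infty$ and below by $(I+\delta)/\cpc(K_0) > 0$, where $K_0$ is any fixed compact set containing $\supp f_1$. Extract a convergent subsequence $t_{n_k} \to t_* > 0$. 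For any $t' < t_*$, eventually $\{f_{n_k} \geq t_{n_k}\} \subseteq \{f_{n_k} \geq t'\}$, and two applications of the USC of $\cpc$ on decreasing compact sets---first as $n_k \to \infty$ with $t'$ fixed, yielding $\cpc(\{f_{n_k} \geq t'\}) \downarrow \cpc(\{f \geq t'\})$, then as $t' \uparrow t_*$, yielding $\cpc(\{f \geq t'\}) \downarrow \cpc(\{f \geq t_*\})$---give
\begin{align*}
I + \delta \;\leq\; \limsup_{k} t_{n_k}\cpc(\{f_{n_k} \geq t_{n_k}\}) \;\leq\; t_*\cpc(\{f \geq t_*\}) \;\leq\; I,
\end{align*}
a contradiction.

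The main obstacle is that the supremum over $t$ in \eqref{eq:sup-integral-strict} does not \emph{a priori} commute with the limit in $n$; the subsequence extraction together with the double USC argument above is what handles this. For the final ``in particular'' assertion, an explicit dyadic step function sequence such as
\[
f_n \;:=\; \bigvee_{k=0}^{\lfloor 2^n \sup f\rfloor} \tfrac{k+1}{2^n}\,\ind_{K_0 \cap \{f \geq k/2^n\}},
\]
with $K_0 \supseteq \supp f$ compact, supplies USC step functions (finite max of indicators of closed sets) with $f_n \downarrow f$ (the dyadic refinement ensures monotonicity in $n$), after which the main convergence statement applies directly.
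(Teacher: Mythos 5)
Your argument is correct and follows the route the paper itself indicates: the paper gives no detailed proof, deferring the Choquet-integral part to \cite{delb12} and \cite{den94} (where it is proved exactly by your layer-cake plus dominated-convergence argument, using that $\{f_n\geq t\}$ are compact and decrease to $\{f\geq t\}$) and asserting that the extremal-integral part ``follows from the upper semicontinuity of $\cpc$''. Your subsequence extraction with the double application of upper semicontinuity is a correct and complete implementation of that one-line remark (the only unstated triviality being the degenerate case $\cpc(K_0)=0$), and the dyadic step-function construction for the ``in particular'' clause is likewise sound.
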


By approximating $f\in\USC$ with step-functions, it is easy to see
that, for all $f \in \USC$ and any sup-measure $\cpc$, the integral
given by \eqref{eq:int-gerritse} coincides with the extremal integral
introduced in \cite{stoev:taq05}.\\

\paragraph{\textbf{\upshape Comonotonic additivity.}}

Both the Choquet integral and the extremal integral are 
homogeneous, e.g.\
\begin{align*}
  \int (cf)d\cpc=c\int fd\cpc\,,\qquad c\geq 0\,.
\end{align*}
While the Choquet integral is not a linear
functional of $f$, it is \emph{comonotonic additive} meaning that
\begin{align*}
  \int(f+g)d\cpc=\int fd\cpc+\int gd\cpc
\end{align*}
for two comonotonic functions $f$ and $g$. Recall that $f$ and $g$ are
\emph{comonotonic} if
\begin{align*}
  (f(x)-f(y))(g(x)-g(y))\geq 0
\end{align*}
for all $x,y\in \carrier$, see \cite[Prop.~5.1]{den94} and
\cite{schmeid86}, where it is also shown that each normalised
comonotonic additive monotone functional can be represented as a
Choquet integral. The subadditivity property of the Choquet integral
\begin{align*}
  \int (f+g)d\cpc\leq \int fd\cpc+\int gd\cpc
\end{align*}
is equivalent to the concavity property of $\cpc$, see
\cite[Th.~6.3]{den94}.  \\
 
\paragraph{\textbf{\upshape Random sup-measures.}}

A sequence $\{\cpc_n, n\geq1\}$ of sup-measures converges
\emph{sup-vaguely} to $\cpc$ if $\limsup_{n\to\infty} \cpc_n(K)\leq
\cpc(K)$ and $\liminf_{n\to\infty} \cpc_n(G)\geq \cpc(G)$ for
all $K\in\sK$ and $G\in\sG$, see e.g.\ \cite[Def.~2.6]{obr:tor:ver90}
and \cite{ver97}.
The sup-vague topology generates the Borel $\sigma$-algebra on the family
of sup-measures and so makes it possible to define a \emph{random
  sup-measure} $X$. 
Its distribution is determined by the joint
distributions of random vectors $X(K_1),\dots,X(K_m)$ for all finite
collections of compact sets $K_1,\dots,K_m$. These distributions form
the system of \emph{finite-dimensional distributions} of $X$.  A
random sup-measure $X$ is said to be \emph{integrable} if $X(K)$ is
integrable for all $K\in\sK$.

By approximating $f\in\USC$ from above using step-functions it is easy
to see that the Choquet and extremal integrals of $f$ with respect to
a random sup-measure are random variables.

\begin{lemma}
  \label{lemma:distr-sup-measure}
  The distribution of a random sup-measure $X$ is uniquely determined by
  the distributions of $\int^e f dX$ for all $f\in\USC$.
\end{lemma}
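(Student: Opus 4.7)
The plan is to reduce the statement to the already-noted fact that the distribution of a random sup-measure is determined by its finite-dimensional distributions on compact sets, i.e., by the joint laws of $(X(K_1),\dots,X(K_m))$ for arbitrary $K_1,\dots,K_m\in\sK$ and $m\geq1$. Thus it suffices to recover each such joint law from the one-dimensional distributions of the random variables $\int^e f\,dX$ as $f$ ranges over $\USC$.

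For fixed $K_1,\dots,K_m\in\sK$ and nonnegative coefficients $a_1,\dots,a_m$, the function $f=\bigvee_{i=1}^m a_i\ind_{K_i}$ lies in $\USC$ (each $\ind_{K_i}$ is upper semicontinuous with compact support, and the finite pointwise maximum preserves these properties). By \eqref{eq:2.2} we have
\begin{align*}
  \int^e f\,dX \;=\; \max_{i=1,\dots,m} a_i\, X(K_i)\,.
\end{align*}
Hence the hypothesis gives us the law of $\max_i a_i X(K_i)$ for every choice of $a_1,\dots,a_m\geq 0$.

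Next I would recover the joint CDF of $(X(K_1),\dots,X(K_m))$ from this family. For strictly positive $a_1,\dots,a_m$,
\begin{align*}
  \P\!\left\{\max_i a_i X(K_i)\leq 1\right\}
  \;=\; \P\!\left\{X(K_i)\leq a_i^{-1}\text{ for all }i\right\}\,,
\end{align*}
and letting $t_i=a_i^{-1}$ range over $(0,\infty)^m$ determines the joint distribution function $(t_1,\dots,t_m)\mapsto \P\{X(K_i)\leq t_i,\ \forall i\}$ on $(0,\infty)^m$. Since $X(K_i)\geq 0$ for each $i$, right-continuity of the joint distribution function extends the values to $[0,\infty)^m$, and so the full joint law of $(X(K_1),\dots,X(K_m))$ is pinned down.

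The only subtlety is the extension to coordinates equal to zero, which is handled by the monotone convergence of distribution functions; otherwise the argument is essentially the elementary observation that maxima along arbitrary directions in $[0,\infty)^m$ determine the multivariate distribution function, exactly as in the classical recovery of the law of a nonnegative random vector from its max-linear functionals.
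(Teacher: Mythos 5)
Your proof is correct and follows essentially the same route as the paper: reduce to finite-dimensional distributions, evaluate $\int^e f\,dX$ on max-linear combinations $f=\bigvee_i a_i\ind_{K_i}$ via \eqref{eq:2.2}, and recover the joint law of $(X(K_1),\dots,X(K_m))$ by varying the coefficients. The only difference is that you spell out the elementary recovery of the joint CDF (including the boundary case of zero coordinates), which the paper leaves implicit.
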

\begin{proof}
  If $f=\bigvee u_i\ind_{K_i}$, then $\int^e f dX=\bigvee u_iX(K_i)$
  by \eqref{eq:2.2}. Thus, it is possible to obtain 
  the joint distribution of
  $X(K_1),\dots,X(K_m)$, i.e.\ the finite dimensional distribution of
  $X$, from the distribution of $\int^e fdX$ for
  varying coefficients $u_1,\dots,u_m\in\R_+$.
\end{proof}

A random sup-measure is said to be \emph{completely random} if its
values on disjoint sets are jointly independent, see
\cite{stoev:taq05}; it is said to have independent peaks in
\cite{nor86}. 

\vspace{10mm}
\section{Max-stable random sup-measures\\ and their tail dependence functionals}
\label{sec:max-stable-random}

\paragraph{\textbf{\upshape Max-stable random vectors.}}

A random variable has a unit Fr{\'e}chet distribution if its
cumulative distribution function is $\exp\{-at^{-1}\}$, $t>0$,
where $a>0$ is called the scale parameter.  A random vector
$\xi=(\xi_1,\dots,\xi_d)$ is called \emph{semi-simple max-stable} if,
for all $u \in \R^d_+=[0,\infty)^d$, the max-linear combination
$\bigvee_{j=1}^d u_j \xi_j$ is a unit Fr{\'e}chet variable with scale
parameter denoted by $\ell(u)$, see \cite{dehaan78}.
The function $\ell: \R^d_+ \mapsto \R_+$ is called \emph{(stable) tail
  dependence function} and has the following properties, see
\cite{beir:goeg:seg:04,mo08e,res13}.
\begin{enumerate}[(i)]
\item $\ell$ is \emph{homogeneous}, i.e.\ $\ell(c u)=c \ell(u)$ for
  all $u \in \R^d_+$ and all $c >0$.
\item $\ell$ is \emph{subadditive}, i.e.\ $\ell(u+v) \leq \ell(u) +
  \ell(v)$ for all $u,v \in \R^d_+$.
\item $\ell$ is \emph{max-completely alternating}, i.e.\ the
  successive differences
  \begin{align*}
    \Delta^{\vee}_{u_1}\ell(u)&=\ell(u)-\ell(u \vee u_1)\,,\\
    \Delta^{\vee}_{u_n}\cdots\Delta^{\vee}_{u_1}\ell(u)
    &=\Delta^{\vee}_{u_{n-1}}\cdots\Delta^{\vee}_{u_1}\ell(u) 
    -\Delta^{\vee}_{u_{n-1}}\cdots\Delta^{\vee}_{u_1}\ell(u\vee u_n)
  \end{align*}
  are all non-positive for all $n\geq1$ and all
  $u,u_1,\dots,u_n\in\R^d_+$.
\end{enumerate}
Since $\ell$ is a sublinear (homogeneous and subadditive) function, it
defines a norm on $\R^d_+$ called a $D$-norm in
\cite{aul:fal:hof13,falk06}.  In fact, the homogeneity and
max-complete alternation suffice to characterise the tail
dependence function as can be seen from a slight modification of
\cite[Th.~6]{res13} and \cite[Th.~4]{res11m}, see also
\cite[Th.~7]{mo08e}.

\begin{theorem}
  \label{thr:ell-sub}
  A function $\ell:\R^d_+\mapsto\R_+$ is a tail dependence function of
  a semi-simple max-stable random vector $\xi$ in $\R^d$ if and only if
  $\ell$ is homogeneous and max-completely alternating.
\end{theorem}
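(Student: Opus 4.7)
The plan is to prove both directions separately, with the forward direction a direct computation and the reverse direction an invocation of the Choquet theorem~\eqref{eq:choquet-th}. For the ``only if'' direction, homogeneity follows from Fr\'echet scaling: for $c > 0$, $\bigvee_j (c u_j)\xi_j = c\bigvee_j u_j \xi_j$ is unit Fr\'echet with scale $c\,\ell(u)$. For max-complete alternation, fix $t > 0$ and consider the events $A_u := \{\bigvee_j u_j \xi_j > t\}$, which satisfy $A_{u \vee v} = A_u \cup A_v$ because $\bigvee_j (u_j \vee v_j)\xi_j = (\bigvee_j u_j \xi_j) \vee (\bigvee_j v_j \xi_j)$ on $\R^d_+$. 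Hence the set-function $\sigma_t(u) := \P(A_u) = 1 - \exp(-\ell(u)/t)$ is the composition of the semigroup homomorphism $u \mapsto A_u$ from $(\R^d_+,\vee)$ into the semigroup of events under union with the probability measure $\P$, which is itself completely alternating on events under union by iterated strong subadditivity (a standard consequence of inclusion-exclusion). Therefore $t\,\sigma_t$ is max-completely alternating on $(\R^d_+,\vee)$, and the pointwise limit $t\sigma_t(u) \to \ell(u)$ as $t \to \infty$ preserves the sign of the $\Delta^\vee$-differences, yielding max-complete alternation of $\ell$.

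For the ``if'' direction, I would realize $\ell$ via the Poisson representation of a max-stable vector. Set $L_u := \{z \in \R^d_+ \setminus \{0\}: \bigvee_j u_j z_j \ge 1\}$ and observe that $L_{u \vee v} = L_u \cup L_v$ by the same coordinatewise argument. Thus $\cpc(L_u) := \ell(u)$ is well-defined and, by max-complete alternation of $\ell$, completely alternating on the sub-lattice $\{L_u\}$ of closed sets. I would extend $\cpc$ to a completely alternating upper semicontinuous capacity on all compact subsets of $\R^d_+ \setminus \{0\}$ and invoke~\eqref{eq:choquet-th} to obtain a locally finite measure $\mu$ on closed sets with $\mu(\sF_{L_u}) = \ell(u)$. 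Homogeneity of $\ell$ forces the one-homogeneous Fr\'echet-type scaling of $\mu$; after polar decomposition of the resulting intensity, define $\xi_j := \sup_k \Gamma_k^{-1} Z_{k,j}$ from the associated Poisson process on $(0,\infty) \times \R^d_+$, and verify $\P(\bigvee_j u_j \xi_j \le t) = \exp(-\ell(u)/t)$ by the standard Poisson avoidance calculation.

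The main obstacle will be the extension of $\cpc$ from the special family $\{L_u\}$ to all of $\sK$ while preserving both complete alternation and upper semicontinuity. The family $\{L_u\}$ is closed under union but far from rich enough to span $\sK$, so this amounts to a regularity/approximation argument that relies on homogeneity of $\ell$ (which in particular forces monotonicity and a Lipschitz-type control on $\ell$, hence continuity of $\cpc$ along monotone sequences of $L_u$'s). The theorem is presented in the text as a slight modification of \cite{res13,res11m,mo08e}, where this extension is carried out under the slightly stronger sublinearity assumption; the new technical content here is checking that max-complete alternation alone already suffices to drive the construction.
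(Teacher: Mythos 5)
Your ``only if'' direction is correct and is essentially the standard argument: $A_{u\vee v}=A_u\cup A_v$ because the $\xi_j$ are non-negative, the set function $A\mapsto\P(A)$ is completely alternating on the union semigroup of events, pre-composition with the homomorphism $u\mapsto A_u$ preserves complete alternation, and the pointwise limit $t(1-e^{-\ell(u)/t})\to\ell(u)$ preserves the finitely many sign conditions in each $\Delta^\vee$-difference. Note that the paper gives no proof of this theorem at all; it is quoted as a slight modification of \cite[Th.~6]{res13}, \cite[Th.~4]{res11m} and \cite[Th.~7]{mo08e}, so the relevant comparison is with the route those references take.

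The ``if'' direction has a genuine gap, and it sits exactly where you place ``the main obstacle''. Setting $\cpc(L_u):=\ell(u)$ and observing $L_{u\vee v}=L_u\cup L_v$ merely transports the hypothesis to an isomorphic copy of the semigroup $(\R_+^d,\vee)$ sitting inside $(\sF,\cup)$; it adds no information. The extension of $\cpc$ to a completely alternating, upper semicontinuous capacity on all of $\sK$ is not a regularity or approximation issue: a general compact set cannot be approximated monotonically by sets of the form $L_u$ (the family $\{L_u\}$ is a $d$-parameter subfamily of $\sF$, closed under unions but not under intersections with arbitrary compacts), and complete alternation on this sub-semigroup imposes no constraints on, hence provides no way to define or verify, the alternation inequalities involving compacts outside the family. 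The only natural candidate extension is $\cpc(K):=\mu(\sF_K)$ for a measure $\mu$ satisfying $\mu(\sF_{L_u})=\ell(u)$ --- but producing such a $\mu$ is precisely the conclusion sought, so the argument is circular. The true content of the theorem is the integral representation of completely alternating (equivalently, negative definite) functions on the idempotent semigroup $(\R_+^d,\vee)$, which identifies the bounded semicharacters as $u\mapsto\ind_{\bigvee_j u_j z_j\le 1}$ and yields $\ell(u)=\mu(L_u)$ directly; this is what \cite{res13,res11m} supply via \cite{ber:c:r}, and it is what the paper leans on. Granting that representation, your concluding Poisson construction is fine, modulo two details: $L_u$ is closed but not compact, so passing from \eqref{eq:choquet-th} (stated for compact argument sets) to $\mu(\sF_{L_u})=\ell(u)$ needs an inner-regularity step, and the boundary set $\{z:\bigvee_j u_j z_j=1\}$ must be shown $\mu$-null (this follows from the continuity of $\ell$, which you correctly extract from homogeneity together with first- and second-order alternation). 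As written, the sufficiency half reconstructs only the scaffolding around the hard step and leaves the hard step unproved.
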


\paragraph{\textbf{\upshape Max-stable random sup-measures.}}
\label{sec:semi-simple-max}

A random sup-measure $X$ is called \emph{semi-simple max-stable} (in
the sequel we say that $X$ is a \emph{max-stable random sup-measure}) 
if its finite-dimensional distributions $X(K_1),\dots,X(K_m)$ 
are semi-simple max-stable random vectors for all $K_1,\dots,K_m\in\sK$, $m\geq1$.

\begin{lemma}
  A random sup-measure $X$ is semi-simple max-stable if and only if
  $\int^e f dX$ is a unit Fr\'echet random variable for all $f \in
  \USC$.
\end{lemma}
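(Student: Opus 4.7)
The plan is to handle the two directions separately. The ``if'' direction will follow immediately from formula~\eqref{eq:2.2}, while the ``only if'' direction will proceed through approximation of general upper semicontinuous functions by step-functions.

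For the ``if'' direction, I would take arbitrary $K_1,\dots,K_m\in\sK$ and $u\in\R^m_+$ and consider the step-function $f=\bigvee_{j=1}^m u_j\ind_{K_j}$, which clearly belongs to $\USC$. By \eqref{eq:2.2}, $\int^e f\,dX=\bigvee_{j=1}^m u_j X(K_j)$, and this is unit Fr\'echet by hypothesis, so $(X(K_1),\dots,X(K_m))$ is semi-simple max-stable. As $K_1,\dots,K_m$ were arbitrary, $X$ is a max-stable random sup-measure.

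For the ``only if'' direction, I would first use \eqref{eq:2.2} again to observe that for any step-function $f=\bigvee_{i=1}^n u_i\ind_{K_i}\in\USC$ the integral $\int^e f\,dX=\bigvee_{i=1}^n u_i X(K_i)$ is a max-linear combination of $X(K_1),\dots,X(K_n)$, hence unit Fr\'echet by the semi-simple max-stability hypothesis. For a general $f\in\USC$, Lemma~\ref{lemma:approx} provides step-functions $f_n\downarrow f$ with $\int^e f_n\,dX\downarrow \int^e f\,dX$ pointwise, and therefore in distribution.

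The main (though modest) obstacle is to verify that a decreasing pointwise limit of unit Fr\'echet random variables is again unit Fr\'echet. Writing $a_n$ for the scale parameter of $\int^e f_n\,dX$, the sequence $(a_n)$ is non-increasing with some limit $a\ge 0$, and taking $n\to\infty$ in $\Prob{\int^e f_n\,dX\le t}=\exp(-a_n/t)$ for each $t>0$ pins down the distribution of $\int^e f\,dX$ as unit Fr\'echet with parameter $a$. The only caveat is the admission of the degenerate case $a=0$ under this label, which is consistent with the convention implicit in the paper's definition of semi-simple max-stability, where the max-linear combination is allowed to degenerate for $u=0$.
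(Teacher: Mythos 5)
Your proposal is correct and follows essentially the same route as the paper: the sufficiency direction via \eqref{eq:2.2} applied to $f=\bigvee u_j\ind_{K_j}$, and the necessity direction via step-functions and downward approximation using Lemma~\ref{lemma:approx}. The only difference is that you spell out the (routine) verification that a decreasing a.s.\ limit of unit Fr\'echet variables is again unit Fr\'echet, which the paper leaves implicit.
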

\begin{proof}
  \textsl{Necessity}.  The statement is true for functions taking a
  finite number of values and then by approximation for $f \in \USC$
  using Lemma~\ref{lemma:approx}.

  \textsl{Sufficiency}.  For any $u_1,\dots,u_m\geq0$ and
  $K_1,\dots,K_m\in\sK$, the random variable $\bigvee u_iX(K_i)$ is
  equal to $\int^e f dX$ for $f=\bigvee u_i\ind_{K_i} \in \USC$, which
  is unit Fr{\'e}chet distributed. Hence $(X(K_1),\dots,X(K_m))$ is a
  semi-simple max-stable random vector.
\end{proof}

\paragraph{\textbf{\upshape Tail dependence functional.}}

In the sequel the scale parameter of the unit Fr\'echet variable
$\int^e f dX$ will be denoted by $\ell(f)$.
The function $\ell:\USC\mapsto\R_+$ is called the \emph{tail
  dependence functional} of $X$. By
Lemma~\ref{lemma:distr-sup-measure}, the tail dependence functional
uniquely determines the law of $X$.

\begin{theorem}
  \label{thr:ell-sup-measure}
  A functional $\ell:\USC\mapsto\R_+$ is the tail dependence functional of
  a (necessarily unique) max-stable random sup-measure if and only if $\ell$ is
  homogeneous, completely alternating on $\USC$ equipped with the
  maximum operation, and upper semicontinuous on $\USC$ meaning that
  $\ell(f_n)\downarrow \ell(f)$ for $f_n \downarrow f$.
\end{theorem}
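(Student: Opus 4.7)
The plan is to treat the two directions separately, in both cases reducing to the finite-dimensional characterisation in Theorem~\ref{thr:ell-sub} by testing $\ell$ on finite families of step functions.

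For \emph{necessity}, let $X$ be a max-stable random sup-measure and let $\ell$ be its tail dependence functional. Homogeneity is immediate from the homogeneity of the extremal integral together with the scaling of the unit Fr\'echet law. To obtain max-complete alternation, fix $f_0,f_1,\dots,f_n\in\USC$ and consider the random vector $\eta=(\int^e f_i\,dX)_{i=0}^n$. For $u\in\R_+^{n+1}$, using that $\int^e$ is homogeneous and maxitive,
\begin{align*}
  \bigvee_{i=0}^n u_i\int^e f_i\,dX=\int^e\Bigl(\bigvee_{i=0}^n u_if_i\Bigr) dX
\end{align*}
is unit Fr\'echet with scale $\ell(\bigvee_i u_if_i)$. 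Hence $\eta$ is semi-simple max-stable with tail dependence function $\ell^\ast(u)=\ell(\bigvee_i u_if_i)$, which by Theorem~\ref{thr:ell-sub} is max-completely alternating on $\R_+^{n+1}$. Evaluating $\ell^\ast$ at standard basis vectors recovers $\Delta^\vee_{f_n}\cdots\Delta^\vee_{f_1}\ell(f_0)\le 0$, the desired relation on $(\USC,\vee)$. Upper semicontinuity follows from Lemma~\ref{lemma:approx}: if $f_n\downarrow f$ in $\USC$, then $\int^e f_n\,dX\downarrow \int^e f\,dX$ pathwise, so a decreasing sequence of unit Fr\'echet variables with scales $\ell(f_n)$ has a unit Fr\'echet limit whose scale is $\lim\ell(f_n)=\ell(f)$.

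For \emph{sufficiency}, I would first build finite-dimensional distributions. Given $K_1,\dots,K_m\in\sK$, set $\ell^K(u):=\ell(\bigvee_i u_i\ind_{K_i})$ for $u\in\R_+^m$. Homogeneity and max-complete alternation on $(\USC,\vee)$ restrict to the corresponding properties of $\ell^K$ on $(\R_+^m,\vee)$, so by Theorem~\ref{thr:ell-sub} there is a semi-simple max-stable random vector $(X(K_1),\dots,X(K_m))$ with tail dependence function $\ell^K$. Consistency under permutation and deletion of coordinates follows directly from the definition of $\ell^K$, so Kolmogorov's extension yields a process $\{X(K):K\in\sK\}$. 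Two kinds of relations hold almost surely for each fixed tuple: monotonicity $X(K_1)\le X(K_2)$ when $K_1\subset K_2$ and maxitivity $X(K_1\cup K_2)=X(K_1)\vee X(K_2)$, both obtained by noting that the two sides have the same Fr\'echet scale (via $\ind_{K_1}\vee \ind_{K_2}=\ind_{K_1\cup K_2}$ and the definition of $\ell^K$) with one side dominating the other. Upper semicontinuity on compact sets, $X(K_n)\downarrow X(K)$ for $K_n\downarrow K$, combines this monotonicity with the pointwise convergence $\ind_{K_n}\downarrow\ind_K$ and the stated upper semicontinuity of $\ell$.

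The main obstacle will be promoting these almost-sure identities, each valid on a full-measure set depending on the finite tuple chosen, to a single event on which $X$ is genuinely a sup-measure on $\sK$. I would handle this by using the second countability of $\carrier$ to fix a countable base of relatively compact open sets, closed under finite unions, and work with the countable subfamily $\sK_0\subset\sK$ of closures of finite unions of such base sets. Discarding a single null set makes monotonicity, maxitivity, and sequential upper semicontinuity hold simultaneously on $\sK_0$. Extending $X$ to arbitrary compact $K$ by $X(K)=\inf\{X(K'):K\subset\mathrm{int}(K'),\,K'\in\sK_0\}$ and then to open sets by $X(G)=\sup\{X(K):K\in\sK_0,\,K\subset G\}$ produces a sup-measure whose finite-dimensional laws on $\sK$ coincide with those built via Kolmogorov. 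Finally, to identify the tail dependence functional of this $X$ with $\ell$, I would approximate any $f\in\USC$ from above by step functions $\bigvee_i u_i\ind_{K_i}$ using Lemma~\ref{lemma:approx} and pass to the limit, invoking the upper semicontinuity of both $\ell$ and the tail dependence functional of the constructed $X$. Uniqueness of the law of $X$ given $\ell$ is already guaranteed by Lemma~\ref{lemma:distr-sup-measure}.
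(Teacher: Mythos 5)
Your proposal is correct and follows essentially the same route as the paper: both directions reduce to Theorem~\ref{thr:ell-sub} by testing $\ell$ on finite families of functions (resp.\ step functions $\bigvee_i u_i\ind_{K_i}$), with Lemma~\ref{lemma:approx} supplying upper semicontinuity and the final identification of the tail dependence functional by approximation from above. The only difference is that you spell out the passage from a consistent, maxitive, u.s.c.\ family of finite-dimensional distributions to an actual random sup-measure (via a countable base and inner/outer regularisation), a step the paper simply asserts as the existence of a unique max-stable random sup-measure with the specified finite-dimensional distributions; your elaboration is sound and fills in that implicit appeal to the standard existence theory.
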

\begin{proof}
  \textsl{Necessity}. The homogeneity property is trivial. The values
  of $\ell$ on $f_1,\dots,f_m\in\USC$ and their partial maxima are the
  extremal coefficients of the semi-simple max-stable random vector
  $(\int^e f_1dX,\dots,\int^e f_mdX)$ and their complete alternation
  property follows from Theorem~\ref{thr:ell-sub}. If $f_n\downarrow
  f$, then $\int^e f_n dX\downarrow \int^e fdX$ a.s.\ by
  Lemma~\ref{lemma:approx}, so that $\ell(f_n)\downarrow \ell(f)$, see
  also \cite[Lemma~2.1~(iv)]{stoev:taq05}.

  \textsl{Sufficiency}. Let $X(K_1),\dots,X(K_m)$ for $K_1,\dots,K_m
  \in \sK$ be the semi-simple random vector with the tail dependence
  function $\ell_{K_1,\dots,K_m}(u)=\ell(\bigvee_{i=1}^m
  u_i\ind_{K_i})$, $u \in \R_+^m$.  The function
  $\ell_{K_1,\dots,K_m}$ is indeed a tail dependence function, since it
  inherits max-completely alternation and homogeneity from
  $\ell$. This system of finite-dimensional distributions is
  consistent, since
  \begin{align*}
    \ell\bigg(\bigvee_{i=1}^{m+1} u_i\ind_{K_i}\bigg)\downarrow 
    \ell\bigg(\bigvee_{i=1}^{m} u_i\ind_{K_i}\bigg)\qquad \text{as }
    u_{m+1}\downarrow 0.
  \end{align*}
  Thus, there exists a unique max-stable random sup-measure $X$ with the
  specified finite-dimensional distributions.  By
  Lemma~\ref{lemma:approx} and the upper semi-continuity of $\ell$,
  the tail dependence functional of $X$ coincides with $\ell$.
\end{proof}

\begin{proposition}
  The tail dependence functional $\ell$ is subadditive,
  i.e.\ $\ell(f+g)\leq \ell(f)+\ell(g)$ for all $f,g\in\USC$.
\end{proposition}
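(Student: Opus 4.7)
The plan is to reduce from general $\USC$ functions to step functions via the upper semicontinuity of $\ell$ supplied by Theorem~\ref{thr:ell-sup-measure}, and then to derive the step-function case from the subadditivity of the finite-dimensional tail dependence function in Theorem~\ref{thr:ell-sub}, after rewriting $f + g$ as a single max-linear combination over a joint refinement of the supports.

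First I would approximate $f$ and $g$ from above by step functions $f_n, g_n \in \USC$ of the form $\bigvee_i a_i \ind_{K_i}$ with finitely many compact sets, as used in Lemma~\ref{lemma:approx}. Then $f_n + g_n \downarrow f + g$ pointwise, and the sum stays in $\USC$. The upper semicontinuity of $\ell$ in Theorem~\ref{thr:ell-sup-measure} gives $\ell(f_n + g_n) \downarrow \ell(f+g)$, $\ell(f_n) \downarrow \ell(f)$ and $\ell(g_n) \downarrow \ell(g)$, so it suffices to establish subadditivity when $f$ and $g$ are step functions.

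For step functions, by enlarging the index set to include all compact sets appearing in either representation (padding with zero coefficients when needed), I can write $f = \bigvee_{\alpha \in A} u_\alpha \ind_{C_\alpha}$ and $g = \bigvee_{\alpha \in A} v_\alpha \ind_{C_\alpha}$ over a common finite collection of compact sets $\{C_\alpha\}_{\alpha \in A}$. The key combinatorial identity is
\[
  f + g = \bigvee_{(\alpha, \beta) \in A \times A} (u_\alpha + v_\beta) \, \ind_{C_\alpha \cap C_\beta},
\]
which follows by pointwise evaluation: at each $x$ the right-hand side equals $\bigl(\bigvee_{\alpha : x \in C_\alpha} u_\alpha\bigr) + \bigl(\bigvee_{\beta : x \in C_\beta} v_\beta\bigr) = f(x) + g(x)$.

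Now consider the semi-simple max-stable random vector $\bigl(X(C_\alpha \cap C_\beta)\bigr)_{(\alpha, \beta) \in A \times A}$ with its tail dependence function $\tilde\ell\bigl((w_{\alpha\beta})\bigr) = \ell\bigl(\bigvee_{\alpha, \beta} w_{\alpha \beta} \ind_{C_\alpha \cap C_\beta}\bigr)$. By Theorem~\ref{thr:ell-sub}, $\tilde\ell$ is subadditive, and decomposing $w_{\alpha \beta} = u_\alpha + v_\beta$ (each summand depending on only one index) yields
\[
  \ell(f + g) = \tilde\ell\bigl((u_\alpha + v_\beta)\bigr) \leq \tilde\ell\bigl((u_\alpha)_{\alpha, \beta}\bigr) + \tilde\ell\bigl((v_\beta)_{\alpha, \beta}\bigr).
\]
Finally, $\tilde\ell\bigl((u_\alpha)_{\alpha,\beta}\bigr)$ is the scale parameter of $\bigvee_\alpha u_\alpha \bigvee_\beta X(C_\alpha \cap C_\beta) = \bigvee_\alpha u_\alpha X(C_\alpha)$, where the last equality uses the sup-measure property $\bigvee_\beta X(C_\alpha \cap C_\beta) = X(C_\alpha)$ (because $C_\alpha \subseteq \bigcup_\beta C_\beta$); this scale equals $\ell(f)$, and analogously the second term equals $\ell(g)$. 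The only mildly non-obvious ingredient is the refinement $\{C_\alpha \cap C_\beta\}$ that converts the additive combination of coefficients into a single max-linear combination on a larger random vector, after which Theorem~\ref{thr:ell-sub} applies directly.
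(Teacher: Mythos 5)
Your proof is correct, and it follows the same overall strategy as the paper's --- reduce to step functions and then invoke the subadditivity of the finite-dimensional stable tail dependence function (property (ii) listed before Theorem~\ref{thr:ell-sub}) --- but the reduction itself is carried out differently. The paper assumes ``without loss of generality'' that $f$ and $g$ are combinations of indicators of \emph{disjoint} compact sets $K_1,\dots,K_m$, so that $f$, $g$ and $f+g$ are simultaneously max-linear combinations over the single random vector $(X(K_1),\dots,X(K_m))$; you instead keep arbitrary compact sets and pass to the product refinement $\{C_\alpha\cap C_\beta\}$, using the pointwise identity $f+g=\bigvee_{\alpha,\beta}(u_\alpha+v_\beta)\ind_{C_\alpha\cap C_\beta}$ together with the maxitivity of $X$ on $\sK$ to identify the two marginal terms with $\ell(f)$ and $\ell(g)$. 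Your device is arguably the more careful one: disjointifying a finite family of compact sets is not in general possible within $\sK$ (differences of compact sets need not be closed), whereas pairwise intersections stay compact, so your refinement avoids a genuine wrinkle hidden in the paper's ``without loss of generality''. Likewise, your approximation of $f$ and $g$ by step functions from above is the direction actually licensed by Lemma~\ref{lemma:approx} and the upper semicontinuity of $\ell$ in Theorem~\ref{thr:ell-sup-measure}, whereas the paper invokes ``approximation from below'', for which no corresponding continuity of $\ell$ is stated. The only cosmetic slip is attributing the finite-dimensional subadditivity to Theorem~\ref{thr:ell-sub} itself rather than to the listed property (ii) of tail dependence functions; the mathematics is unaffected.
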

\begin{proof}
  By approximation from below, it suffices to derive the
  result for step-functions $f$ and $g$ that, without loss of
  generality, can be taken as $f=\sum a_i\ind_{K_i}$ and $g=\sum
  b_i\ind_{K_i}$ for disjoint $K_1,\dots,K_m \in \sK$.  Then $\ell(f)$
  equals the tail dependence function of the random vector
  $(X(K_1),\dots,X(K_m))$ in direction $(a_1,\dots,a_m)$ and similar
  interpretations hold for $\ell(g)$ and $\ell(f+g)$. It suffices to
  refer to the subadditivity property of the tail dependence function.
\end{proof}

\paragraph{\textbf{\upshape Extremal coefficient functional.}}

Let $X$ be a max-stable random sup-measure with tail dependence functional
$\ell$.  The set-function $\theta(K)=\ell(\ind_K)$, $K\in\sK$, will be
termed the \emph{extremal coefficient functional} of $X$. It is necessarily 
a capacity on $\sK$ as the following lemma shows. Note that
$\theta(K)$ is the scale parameter of the unit Fr\'echet law of
$X(K)$. 

\begin{lemma}
  \label{lemma:theta-ca-usc}
  A functional $\theta:\sK\mapsto\R_+$ is the extremal coefficient
  functional of a stable sup-measure if and only if it is completely
  alternating, upper semicontinuous and satisfies
  $\theta(\emptyset)=0$. 
\end{lemma}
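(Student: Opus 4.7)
The plan is to reduce the characterisation for $\theta$ on $\sK$ to the established characterisation of tail dependence functionals on $\USC$ (Theorem~\ref{thr:ell-sup-measure}), using the Choquet integral as the bridge.

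For necessity, suppose $\theta(K)=\ell(\ind_K)$ for some tail dependence functional $\ell$ given by Theorem~\ref{thr:ell-sup-measure}. Then $\theta(\emptyset)=\ell(0)=0$ by homogeneity. Complete alternation transfers directly because $\ind_{K\cup K'}=\ind_K\vee\ind_{K'}$, so the recursive differences $\Delta_{K_i}$ acting on $\theta$ become exactly the differences $\Delta^\vee_{\ind_{K_i}}$ acting on $\ell$ and are hence non-positive. Upper semicontinuity follows because $K_n\downarrow K$ with $K_n,K\in\sK$ yields $\ind_{K_n}\downarrow\ind_K$ pointwise, and upper semicontinuity of $\ell$ on $\USC$ then gives $\theta(K_n)\downarrow\theta(K)$.

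For sufficiency, given $\theta$ completely alternating, upper semicontinuous on $\sK$ with $\theta(\emptyset)=0$, the modified Choquet theorem \eqref{eq:choquet-th} produces a locally finite measure $\nu_\theta$ on $\sF'$ with $\nu_\theta(\sF_K)=\theta(K)$. The natural candidate is the Choquet integral $\ell(f):=\int f\,d\theta$, which by Lemma~\ref{lemma:int-theta-nu} equals the ordinary integral $\int f^\vee\,d\nu_\theta$ for $f\in\USC$. I would then verify the three hypotheses of Theorem~\ref{thr:ell-sup-measure} for this $\ell$: homogeneity is immediate from the definition of the Choquet integral, and upper semicontinuity on $\USC$ is exactly Lemma~\ref{lemma:approx}.

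The only genuinely non-trivial step is max-complete alternation. I would prove it by passing through the measure representation $\ell(f)=\int f^\vee\,d\nu_\theta$ and using the pointwise identity $(f\vee g)^\vee=f^\vee\vee g^\vee$ on $\sF'$, together with the real-number inequality
\begin{align*}
  \sum_{I\subseteq\{1,\dots,n\}}(-1)^{|I|}\Bigl(a\vee\bigvee_{i\in I}b_i\Bigr)\leq 0
  \qquad (a,b_1,\dots,b_n\in\R_+),
\end{align*}
which one checks by the indicator identity $\ind_{a\vee\bigvee_{i\in I}b_i\geq t}=1-(1-\ind_{a\geq t})\prod_{i\in I}(1-\ind_{b_i\geq t})$ and integration in $t$. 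Applying this pointwise in $F\in\sF'$ to $a=f_0^\vee(F)$, $b_i=f_i^\vee(F)$ and integrating against $\nu_\theta$ gives $\Delta^\vee_{f_n}\cdots\Delta^\vee_{f_1}\ell(f_0)\leq 0$. Theorem~\ref{thr:ell-sup-measure} then yields a max-stable random sup-measure $X$ with tail dependence functional $\ell$, and $\ell(\ind_K)=\nu_\theta(\sF_K)=\theta(K)$ identifies $\theta$ as its extremal coefficient functional. The main obstacle is thus the max-complete alternation; everything else is either immediate from the Choquet integral formalism or a direct application of the preceding lemmas.
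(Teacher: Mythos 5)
Your proof is correct and follows essentially the same route as the paper: necessity is transferred verbatim from the properties of $\ell$ via $\ind_{K_1}\vee\ind_{K_2}=\ind_{K_1\cup K_2}$, and sufficiency takes $\ell(f)=\int f\,d\theta$ and checks the hypotheses of Theorem~\ref{thr:ell-sup-measure}. The only difference is that the paper leaves the verification of max-complete alternation of the Choquet integral implicit, whereas you supply it explicitly (and correctly) by passing to $\ell(f)=\int f^\vee\,d\nu_\theta$ and using the inclusion--exclusion identity for indicators.
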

\begin{proof}
  \textsl{Necessity.}  The (union-)complete alternation property of
  $\theta$ follows from the max-complete alternation property of the
  tail dependence functional $\ell$, cf.\
  Theorem~\ref{thr:ell-sup-measure} for functions
  $f_i(x)=\ind_{K_i}(x)$, $i=1,\dots,m$, taking into account that
  $\ind_{K_i}\vee\ind_{K_j}=\ind_{K_i\cup K_j}$. The upper
  semicontinuity of $\theta$ follows from the upper semicontinuity of
  $\ell$ noticing that $\ind_{K_n}\downarrow\ind_K$ as $K_n\downarrow
  K$. Since $\ell$ is homogeneous, $\theta(\emptyset)=0$.
  
  \textsl{Sufficiency.} Setting $\ell(f)=\int f d\theta$ the Choquet integral
  with respect to $\theta$, we see that the functional $\ell$ satisfies the 
  properties of a tail dependence functional, cf.\ 
  Theorem~\ref{thr:ell-sup-measure}.
\end{proof}

\paragraph{\textbf{\upshape Choquet random sup-measures (CRSMs).}}
In general, the information contained in the extremal coefficient functional 
$\theta$ is not sufficient to
recover the tail dependence function $\ell$ and so the distribution of
the corresponding max-stable random sup-measure. Now we single out particular
max-stable random sup-measures, whose distributions are completely characterised
by the extremal coefficient functional.

\begin{definition}
  A stable sup-measure $X$ is said to be a \emph{Choquet random sup-measure} 
  (CRSM) if its tail dependence functional $\ell$ is comonotonic additive.
\end{definition}

\begin{theorem}
  \label{thr:ell} 
  A stable sup-measure $X$ is a CRSM if and only if
  its tail dependence functional $\ell$ is given by the Choquet
  integral
  \begin{align}
    \label{eq:choquet-infinite}
    \ell(f)=\int fd\theta
  \end{align}
  with respect to its extremal coefficient functional $\theta$.  The
  functional $\theta$ uniquely determines the distribution of $X$.
\end{theorem}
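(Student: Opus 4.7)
The plan is to prove the two implications separately and then derive the uniqueness claim as a corollary of Lemma~\ref{lemma:distr-sup-measure}.

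For the ``if'' direction, suppose $\ell(f)=\int f\,d\theta$. I will note that the Choquet integral is automatically comonotonic additive (the classical Schmeidler--Denneberg fact recalled in the ``Comonotonic additivity'' paragraph), so $X$ is a CRSM by definition. That this $\ell$ is indeed the tail dependence functional of some max-stable random sup-measure $X$ with $\theta(K)=\ell(\ind_K)$ is precisely the sufficiency half of Lemma~\ref{lemma:theta-ca-usc}, so nothing more is needed here.

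For the ``only if'' direction, assume $X$ is a CRSM, i.e.\ $\ell$ is comonotonic additive. The core step is to establish $\ell(f)=\int f\,d\theta$ on nested step-functions of the form $f=\sum_{i=1}^n c_i\ind_{K_i}$ with $K_1\supseteq K_2\supseteq\cdots\supseteq K_n$ in $\sK$ and $c_i\geq 0$. A direct case check using the nestedness shows that any two summands $c_i\ind_{K_i}$ and $c_j\ind_{K_j}$ are comonotonic, and, more importantly, that each partial sum $\sum_{i=1}^{k-1} c_i\ind_{K_i}$ remains comonotonic with the next summand $c_k\ind_{K_k}$ (because on the set where $c_k\ind_{K_k}$ jumps up, the partial sum already attains its maximum value $c_1+\cdots+c_{k-1}$). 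Iterated application of comonotonic additivity then yields $\ell(f)=\sum_{i=1}^n c_i\,\theta(K_i)$, and a direct evaluation of \eqref{eq:choquet-int} for this nested step-function gives the same sum, so $\ell(f)=\int f\,d\theta$ on this class.

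To pass to arbitrary $f\in\USC$, I approximate $f$ from above by such nested step-functions $f_n\downarrow f$ using the standard dyadic layer-cake construction with upper level sets $\{f\geq k/2^n\}$; these sets are compact because $f$ is upper semicontinuous with relatively compact support, so each $f_n$ lies in $\USC$ and has the required nested structure. The upper semicontinuity of $\ell$ from Theorem~\ref{thr:ell-sup-measure} gives $\ell(f_n)\downarrow\ell(f)$, while Lemma~\ref{lemma:approx} gives $\int f_n\,d\theta\downarrow\int f\,d\theta$, so the identity on step-functions transfers to the limit. Uniqueness of the law of $X$ then follows at once: $\theta$ determines $\ell$ through the Choquet integral representation, and $\ell$ determines the finite-dimensional distributions of $X$ by Lemma~\ref{lemma:distr-sup-measure}. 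The main obstacle is the bookkeeping in Step~2, namely checking that comonotonicity really does propagate through sums of nested scaled indicators and that the dyadic approximation preserves both the nested-level structure and membership in $\USC$; once these are in hand, everything else is an application of already-established convergence and representation lemmas.
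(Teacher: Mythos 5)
Your proof is correct, but your necessity argument takes a genuinely different route from the paper's. The paper invokes Schmeidler's representation theorem (each comonotonic additive monotone functional is a Choquet integral) on an exhausting sequence of compact sets $K_n\uparrow\carrier$, obtains capacities $\theta_n$ with $\ell(f)=\int f\,d\theta_n$ for $f$ supported by $K_n$, and then checks that the $\theta_n$ are consistent so that they glue to a single $\theta$. You instead bypass Schmeidler entirely: you verify the identity $\ell(f)=\sum_i c_i\theta(K_i)=\int f\,d\theta$ by hand on nested step-functions via iterated comonotonic additivity (your observation that the partial sum $\sum_{i<k}c_i\ind_{K_i}$ attains its maximum wherever $c_k\ind_{K_k}$ is positive is exactly what makes the iteration legitimate), and then pass to general $f\in\USC$ using the upper semicontinuity of $\ell$ and Lemma~\ref{lemma:approx}. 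Your route is longer but more self-contained and sidesteps the compact-exhaustion bookkeeping; the paper's route is shorter at the cost of citing an external representation theorem. One small imprecision: the step-functions $\sum_{k\geq1}2^{-n}\ind_{\{f\geq k2^{-n}\}}$ built literally from the level sets $\{f\geq k2^{-n}\}$ equal $2^{-n}\lfloor 2^nf\rfloor$ and approximate $f$ from \emph{below}; to approximate from above while keeping $\USC$ membership you need to shift the layers, e.g.\ use $2^{-n}\ind_{\overline{\supp f}}+\sum_{k\geq1}2^{-n}\ind_{\{f\geq k2^{-n}\}}$, whose level sets are still nested and compact. Since Lemma~\ref{lemma:approx} already guarantees the existence of such approximations from above, this is a cosmetic fix rather than a gap.
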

\begin{proof}
  Sufficiency follows from the comonotonic additivity of the Choquet
  integral. For the proof of necessity, consider a sequence
  $\{K_n,n\geq1\}$ of compact sets that grows to $\carrier$. For each
  $n\geq1$, the tail dependence functional $\ell$ is comonotonic
  additive on functions $f$ supported by $K_n$ if and only if it can
  be represented as the Choquet integral with respect to a capacity
  $\theta_n$ (see \cite{schmeid86}), i.e.\ $\ell(f)=\int fd\theta_n$
  for $f$ supported by $K_n$, where $\theta_n(K)=\ell(\ind_K)$,
  $K\in\sK$, $K\subset K_n$. Thus, $\theta_n$ is the extremal
  coefficient functional of $X(K)$, $K\subset K_n$. Noticing that
  $\theta_n(K)=\theta_m(K)$ for $m>n$ and $K\subset K_n$,
  \eqref{eq:choquet-infinite} holds for $\theta(K)=\theta_n(K)$ with
  $K\subset K_n$.
\end{proof}

\begin{remark}
  CRSMs appear as weak limits for the scaled maxima of indicator
  random sup-measures. This also relates to their series representation
  derived in the following section. 
\end{remark}

\vspace{10mm}
\section{Series representations}
\label{sec:seri-repr-stable}

\paragraph{\textbf{\upshape Series representation of max-stable random sup-measures.}}
A useful tool for the study of stable random elements is their series
representation in terms of the sum (or maximum) of i.i.d.\ random
elements scaled by the successive points of the unit intensity Poisson
process, see \cite{dehaan84} and \cite{lep:wood:zin81} for the
max and sum-stable cases, and \cite{dav:mol:zuy08} for general
semigroups.  The following result provides a series decomposition for
max-stable random sup-measures.

Denote by $\lambda$ the Lebesgue measure on $\R_+$ and let
$\{\Gamma_i,i\geq1\}$ be the sequence of successive points of the unit
intensity Poisson process on $\R_+$.  Denote by $\sme$ the family of
all sup-measures on $\carrier$, and by $\smi$ the family of scaled
indicator sup-measures $c\ind_{F\cap K\neq\emptyset}$ for $c>0$ and
$F\in\sF$. Their non-trivial subsets will be denoted by
$\sme'=\sme\setminus\{0\}$ and $\smi'=\smi\setminus\{0\}$,
respectively.

\begin{theorem}
  \label{thr:lepage-sup-measures}
  A random sup-measure $X$ is max-stable if and only if it can be decomposed as a max-series
  \begin{align}
    \label{eq:lepage-sm}
    X \eqd \bigvee_{i\geq1} \Gamma_i^{-1} Y_i,
  \end{align}
  where $\{Y_i,i\geq1\}$ is a sequence of i.i.d.\ copies of an integrable random
  sup-measure $Y$ and independent of the sequence $\{\Gamma_i,i\geq 1\}$.
  The tail dependence functional $X$ is then given by
  \begin{align}
    \label{eq:tail-x-lepage}
    \ell(f)=\E \int^e fdY,\qquad f\in\USC. 
  \end{align}
  The random sup-measure $X$ is a.s.\ non-trivial if and only if $Y$ is a.s.\
  non-trivial.
\end{theorem}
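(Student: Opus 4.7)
The plan is to treat sufficiency and necessity separately, concentrating nearly all the work in the necessity direction.

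For \textbf{sufficiency}, assume $X \eqd \bigvee_{i\geq 1} \Gamma_i^{-1} Y_i$ with $Y_i$ i.i.d.\ copies of an integrable sup-measure $Y$ independent of $\{\Gamma_i\}$. For any $f\in\USC$, I would use the distributivity of the extremal integral over pointwise suprema of sup-measures (most cleanly derived via sup-derivatives) to obtain $\int^e f\, dX \eqd \bigvee_{i\geq 1} \Gamma_i^{-1} \int^e f\, dY_i$. The right-hand side is the supremum of a marked Poisson point process on $(0,\infty)$ whose expected number of points exceeding $u>0$ equals
\begin{equation*}
\int_0^\infty \P\bigl\{t^{-1}\textstyle\int^e f\, dY > u\bigr\}\, dt \;=\; u^{-1}\E\int^e f\, dY,
\end{equation*}
so that $\int^e f\, dX$ is unit Fr\'echet with scale $\E\int^e f\, dY$. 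By the preceding lemma characterising max-stability via unit-Fr\'echet extremal integrals, this simultaneously yields max-stability of $X$ and formula~\eqref{eq:tail-x-lepage}.

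For \textbf{necessity}, the plan is to construct an exponent intensity $\mu$ of $X$ on $\sme'$, exploit its 1-homogeneity under scaling, and disintegrate it radially to read off the distribution of $Y$. As a first step I would invoke classical multivariate extreme value theory: each finite-dimensional law $(X(K_1),\dots,X(K_m))$ admits an exponent measure on $\R^m_+\setminus\{0\}$ determined by $\ell$, and these are consistent under coordinate projections. Since the sup-vague topology makes $\sme$ a Polish space, an extension-type argument together with the upper semicontinuity of $\ell$ (which prevents mass from escaping to ``infinity'') assembles them into a locally finite Borel measure $\mu$ on $\sme'$ satisfying
\begin{equation*}
\mu\bigl(\bigl\{m\in\sme':\ \textstyle\int^e f\, dm > u\bigr\}\bigr) \;=\; u^{-1}\ell(f), \qquad f\in\USC,\ u>0,
\end{equation*}
and obeying $\mu(c\,\cdot\,)=c^{-1}\mu$ under $m\mapsto cm$, inherited from $\ell(cf)=c\ell(f)$.

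The main obstacle is the next step: disintegrating $\mu$ radially despite the absence of a norm or a canonical sphere on $\sme'$. I plan to replace the missing norm by the 1-homogeneous, upper semicontinuous gauge $h(m):=\int^e f_0\, dm$, choosing $f_0\in\contf$ positive on a relatively compact set large enough that $h>0$ holds $\mu$-a.e.\ on $\sme'$. Since $\mu\{h>u\}=u^{-1}\ell(f_0)<\infty$ and $\mu$ is scaling invariant, one obtains a polar representation of $\mu$ as the image of $\lambda\otimes P_Y$ under $(t,y)\mapsto t^{-1}y$, where $P_Y$ is a probability measure supported on $\{h=1\}$. Letting $Y\sim P_Y$, the Poisson construction $\bigvee_{i\geq 1}\Gamma_i^{-1}Y_i$ has exponent measure $\mu$ and therefore the same law as $X$. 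Integrability of $Y$ follows from $\E Y(K)=\theta(K)<\infty$, and the triviality dichotomy reduces to $\ell\equiv 0 \iff Y\equiv 0$ a.s.
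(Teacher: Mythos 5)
Your sufficiency argument matches the paper's and is fine. The necessity direction, however, has a genuine gap at exactly the point you identify as the main obstacle: the radial disintegration. You propose to norm the L\'evy measure $\mu$ by the gauge $h(m)=\int^e f_0\,dm$ for a single $f_0\in\contf$, ``chosen large enough that $h>0$ holds $\mu$-a.e.'' No such $f_0$ exists in general: every $f_0\in\contf$ has relatively compact support, whereas on a non-compact carrier the L\'evy measure typically charges sup-measures that vanish on any fixed compact set. (Take $X$ stationary on $\carrier=\R^d$, e.g.\ completely random with Lebesgue control measure; then $\mu$ is carried by scaled indicators of singletons spread over all of $\R^d$, and $\mu\{h=0\}=\infty$ for every choice of $f_0$.) Consequently the set $\{h=1\}$ cannot carry a spectral probability measure and the polar factorisation of $\mu$ as the image of $\lambda\otimes P_Y$ breaks down. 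The paper's fix is precisely to replace your single gauge by $r(\varphi)=\sup_{i\geq1} b_i\varphi(K_i)$, where $\{K_i\}$ runs over a countable base of the \emph{whole} space (so that $r(\varphi)>0$ for every non-zero sup-measure) and the weights $b_i$ are chosen, via the argument from de~Haan's Theorem~1, so that $r<\infty$ holds $\mu$-a.e.; both requirements are needed and neither is automatic. Once such a gauge is in place, the rest of your disintegration goes through as you describe, as does your identification of the integrability of $Y$ and the triviality dichotomy.

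A secondary remark: your construction of $\mu$ by pasting together finite-dimensional exponent measures through a Kolmogorov-type extension on the Polish space of sup-measures is plausible but left entirely unjustified; the paper sidesteps this by invoking Norberg's representation of max-infinitely divisible random sup-measures, which delivers the Poisson process on $\sme'$ and its intensity measure in one step, and then obtains the $1$-homogeneity of that intensity from the theory of stable laws on convex cones (Davydov--Molchanov--Zuyev) rather than from a bare appeal to $\ell(cf)=c\ell(f)$. This is not the essential difficulty, but the existence and local finiteness of the L\'evy measure is itself a citation-worthy step, not a routine extension.
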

\begin{proof}
  \textsl{Sufficiency}. If $X$ is given by the right-hand side of
  \eqref{eq:lepage-sm}, then 
  \begin{align*}
    \int^e fdX=\bigvee_{i\geq1} \Gamma_i^{-1} \int^e fdY_i
  \end{align*}
  is a unit Fr\'echet random variable with the scale parameter
  $\ell(f)$ given by \eqref{eq:tail-x-lepage}, which is finite if
  $f\in\USC$ and $Y$ has integrable values on compact sets.

  \textsl{Necessity.} It suffices to consider the case of an 
  a.s.\ non-trivial random sup-measure $X$.
  Note that a max-stable random sup-measure is necessarily
  max-infinitely divisible.  By \cite[Th.~5.1]{nor86} and noticing that
  the support of the distribution of $X(K)$ is the whole $\R_+$, 
  the sup-measure $X$ can be represented as
  \begin{align}
    \label{eq:norberg}
    X\eqd \bigvee_{i\geq 1} \eta_i,
  \end{align}
  where $\{\eta_i,i\geq1\}$ form a Poisson process with the unique
  intensity measure $\Lambda$ on $\sme'$, that is called the L\'evy
  measure.

  At this point it is useful to view the space $\sme$ as a convex cone
  which is the abelian semigroup with the semigroup operation being
  maximum and the scaling given by scaling the values of sup-measures,
  see \cite{ber:c:r} and \cite{dav:mol:zuy08}. A separating family of
  semicharacters on $(\sme,\vee)$ is given by
  $\chi_{K,a}(\cpc)=\ind_{\cpc(K)\leq a}$, $\cpc\in\sme$, $K\in\sK$,
  and $a>0$. This means that two different sup-measures yield
  different values for a semicharacter from this family. 
  It is easy to see that condition \textbf{(C)} of
  \cite{dav:mol:zuy08} is satisfied, while \eqref{eq:norberg} means
  that the L\'evy measure of $X$ is supported by $\sme$ in the
  terminology of \cite{dav:mol:zuy08}.  By
  \cite[Th.~6.1]{dav:mol:zuy08}, $\Lambda$ is $1$-homogeneous with
  respect to scaling, i.e.\
  \begin{align*}
    \Lambda(\{c \cpc:\; \cpc\in B\})=c^{-1}\Lambda(B),\qquad c>0,
  \end{align*}
  for all Borel $B\subset\sme'$.  Let $\{K_i,i\geq1\}$ be the closures
  of relatively compact sets that form a countable base for the
  topology of $\carrier$.  Then
  \begin{align*}
    -\log\Prob{X(K_i)\leq a_i,\, i=1,\dots,m} =\Lambda(\{\varphi \,:\,
    \max_{i=1,\dots,m}\varphi(K_i) > a_i \}),
  \end{align*} 
  for $a_1,\dots,a_m>0$ and $m\geq1$. By repeating an argument from
  the proof of \cite[Th.~1]{dehaan84}, there exist $b_i>0$, $i\geq1$,
  such that $\Lambda$ is supported by $\sme_b=\{\varphi \in \sme'
  \,:\, r(\varphi)<\infty \}$, where $r(\phi)=\sup_{i\geq 1} b_i
  X(K_i)$.
  Denote $S=\{\varphi \in \sme_b \,:\, r(\varphi)=1\}$ 
  and define the map $T:\sme_b \mapsto (0,\infty)
  \times S$, by letting $T(\varphi)=(r(\varphi),\varphi/r(\varphi))$,
  whose inverse is simply $T^{-1}(r,\varphi)=r\varphi$. By the
  homogeneity property of $\Lambda$ on $\sme_b$ and the homogeneity of
  $r$, it is easily seen that the push-forward of $\Lambda$ under $T$
  is the product measure $u^{-2}du\otimes \pi(d\varphi)$ for a finite
  measure $\pi$ on $S$ given by
  \begin{align*}
    \pi(B)=\Lambda( \{\varphi \,:\, r(\varphi)>1, \,
    \varphi/r(\varphi) \in B \}).
  \end{align*}
  If the $b_i$ are scaled by the same constant, $\pi$ can be adjusted
  to become a probability measure on $\sme_b$.  Conversely, $\Lambda$
  is fully determined by $\pi$ through
  \begin{align*}
    \Lambda(\{\varphi \,:\, \max_{i=1,\dots,m}\varphi(K_i) > a_i \})
    = \int_{S} \bigvee_{i=1}^{m} \frac{\varphi(K_i)}{a_i} \pi(d\varphi).
  \end{align*}
  Finally, \eqref{eq:lepage-sm} follows by letting $Y_i$ be i.i.d.\
  with distribution $\pi$.
\end{proof}

\begin{remark}
  \label{rem:t-vs-gamma}
  The intensity measure $\Lambda$ of the Poisson process
  $\{\eta_i,i\geq1\}$ from \eqref{eq:norberg} is a homogeneous measure
  on $\sme'$. Sometimes, $\Lambda$ is decomposed as the push-forward
  of the product of the measure with density $t^{-2}$ on $(0,\infty)$
  and a not necessarily finite measure $\nu$ on $\sme'$. Then, instead
  of \eqref{eq:lepage-sm}, one obtains the representation
  $\bigvee_{i\geq1} t_i^{-1}Y_i$, where $\{(t_i,Y_i),i\geq1\}$ is the
  Poisson process on $\R_+\times\sme'$ with intensity measure
  $\lambda\otimes\nu$. The special feature of \eqref{eq:lepage-sm} is the fact
  that such a Poisson process can be viewed as the Poisson process on
  the positive half-line marked by i.i.d.\ copies of a random
  sup-measure.
\end{remark}

\begin{remark}
  \label{rem:non-unique-Y}
  The distribution of $Y$ in Theorem~\ref{thr:lepage-sup-measures},
  i.e.\ the probability measure $\pi$ on $\sme'$ that was constructed
  in the proof, is said to be the \emph{spectral measure} of $X$.  The
  spectral measure is not unique, e.g.\ it is possible to replace $Y$
  with $\zeta Y$, where $\zeta$ is any non-negative random variable
  independent of $Y$ with the unit expectation. Two random
  sup-measures, $Y$ and $Y'$, yield the same max-stable random sup-measure if
  $\E\int^e fdY=\E\int^e fdY'$ for all $f\in\USC$.
 
  In case of a countable carrier space
  $\carrier=\{x_i,i\geq1\}$, it means that the sequences
  $\{Y(\{x_i\}),i\geq1\}$ and $\{Y'(\{x_i\}),i\geq1\}$ are zonoid
  equivalent, see \cite{mol:sch:stuc13}. The uniqueness of $\pi$ (and
  $Y$) is achieved if the values of $Y$ are normalised, e.g.\ by
  assuming that $Y\in S$ as introduced in the proof of
  Theorem~\ref{thr:lepage-sup-measures}.  

  If $X(\carrier)$ is a.s.\
  finite, the uniqueness can be achieved by requiring that
  $Y(\carrier)=c$ for a constant $c>0$.  In this case, the proof of
  Theorem~\ref{thr:lepage-sup-measures} simplifies
  using $\varphi(\carrier)$ instead of $r(\varphi)$.\\
\end{remark}

\paragraph{\textbf{\upshape Series representation of CRSMs.}}
The following result characterises CRSMs in terms of their series
representations. Recall that $\sF'=\sF\setminus\{\emptyset\}$.

\begin{theorem}
  \label{thr:tm-sup-measures}
  A random sup-measure $X$ is a CRSM with the extremal
  coefficient functional $\theta$ if and only if
  \begin{align}
    \label{eq:rep-tm-sup-pp}
    X(K)\eqd \bigvee_{i\geq1} t_i^{-1}\ind_{F_i\cap
      K\neq\emptyset},
    \qquad K\in\sK\,,
  \end{align}
  where $\{(t_i,F_i),i\geq1\}$ is the Poisson process on
  $\R_+\times \sF'$ with intensity $\lambda\otimes\nu$ for a locally
  finite measure $\nu$ on $\sF'$ such that
  \begin{align}
    \label{eq:nu-theta-1}
    \nu(\sF_K)=\theta(K),\qquad K\in\sK. 
  \end{align}
\end{theorem}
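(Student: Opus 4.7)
The plan is to verify both implications by identifying the tail dependence functional of the proposed Poisson-based construction with the Choquet integral $\int f\,d\theta$ and then invoking Lemma~\ref{lemma:distr-sup-measure} to conclude equality in law of random sup-measures.

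For sufficiency, suppose $\nu$ is locally finite on $\sF'$ with $\nu(\sF_K)=\theta(K)$, and let $\tilde X$ denote the right-hand side of \eqref{eq:rep-tm-sup-pp}. For any $f\in\USC$, the extremal integral of the indicator sup-measure $\ind_{F\cap\cdot\neq\emptyset}$ against $f$ equals $\sup_{x\in F}f(x)=f^\vee(F)$, so
\begin{align*}
  \int^e f\,d\tilde X=\bigvee_{i\geq 1} t_i^{-1}f^\vee(F_i).
\end{align*}
A standard Laplace-functional computation for the Poisson process $\{(t_i,F_i)\}$ shows this variable to be unit Fr\'echet with scale parameter $\int_{\sF'} f^\vee(F)\,\nu(dF)$. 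By Lemma~\ref{lemma:int-theta-nu} combined with \eqref{eq:nu-theta-1}, this scale parameter equals $\int f\,d\theta$. Hence the tail dependence functional of $\tilde X$ is the Choquet integral with respect to $\theta$; it is comonotonic additive and satisfies $\ell(\ind_K)=\theta(K)$, so $\tilde X$ is a CRSM with extremal coefficient functional $\theta$.

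For necessity, let $X$ be a CRSM with extremal coefficient functional $\theta$. Theorem~\ref{thr:ell} gives $\ell_X(f)=\int f\,d\theta$, and Lemma~\ref{lemma:theta-ca-usc} ensures that $\theta$ is a completely alternating, upper semicontinuous capacity on $\sK$ vanishing at $\emptyset$. The Choquet theorem \eqref{eq:choquet-th} then yields a unique locally finite measure $\nu=\nu_\theta$ on $\sF'$ satisfying \eqref{eq:nu-theta-1}. Applying the sufficiency direction to this $\nu$ produces a random sup-measure $\tilde X$ of the form \eqref{eq:rep-tm-sup-pp} whose tail dependence functional agrees with that of $X$, and Lemma~\ref{lemma:distr-sup-measure} gives $X\eqd\tilde X$.

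The most delicate point I expect is justifying that the maximum in \eqref{eq:rep-tm-sup-pp} is a.s.\ finite for every $K\in\sK$ and that the Poisson Laplace-functional computation extends from step functions, where \eqref{eq:2.2} makes it transparent, to arbitrary $f\in\USC$. Local finiteness of $\nu$, equivalent to $\theta(K)<\infty$ on compacta, implies that for any $K\in\sK$ and any $\eps>0$ only finitely many indices $i$ satisfy $t_i\leq\eps^{-1}$ and $F_i\cap K\neq\emptyset$, so the maximum is well-defined; Lemma~\ref{lemma:approx} then propagates the identity for the Fr\'echet scale parameter from step functions to all of $\USC$ by monotone approximation from above.
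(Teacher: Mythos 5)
Your proposal is correct and follows essentially the same route as the paper's proof: the void-probability (Laplace functional) computation for the Poisson process yielding the scale parameter $\int f^\vee\,d\nu$, the identification $\int f^\vee\,d\nu=\int f\,d\theta$ via Lemma~\ref{lemma:int-theta-nu}, and for necessity the Choquet theorem applied to $\theta$ (guaranteed admissible by Lemma~\ref{lemma:theta-ca-usc}) followed by Theorem~\ref{thr:ell} and Lemma~\ref{lemma:distr-sup-measure}. Your added remarks on the a.s.\ finiteness of the maximum and the monotone approximation from step functions are consistent with the paper's argument.
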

\begin{proof}
  \textsl{Sufficiency}. 
  A random sup-measure given by \eqref{eq:rep-tm-sup-pp} is
  necessarily semi-simple max-stable. The local finiteness of $\nu$
  implies that at most a finite number of pairs $(t_i,F_i)$ satisfy
  $F_i\cap K\neq\emptyset$ and $t_i\leq s$ for any $K\in\sK$ and
  $s\geq0$, so that $X(K)$ is almost surely finite. For any
  $f\in\USC$,
  \begin{align*}
    \int^e fdX
    \eqd \bigvee_{i\geq1} t_i^{-1} f^\vee(F_i)
  \end{align*}
  is the series representation of the unit Fr\'echet random
  variable. In order to find its scale parameter we calculate the void
  probability of the Poisson process $\{(t_i,F_i)\}$ as follows
  \begin{align*}
    \Prob{\int^e fdX< s}&=
    \exp\{-(\lambda\otimes\nu)(\{(t,F):\; f^\vee(F)t^{-1}\geq s\})\}\\
    &=\exp\bigg\{-\int_0^\infty \nu(\{F:\; f^\vee(F)\geq ts\})dt\bigg\}\\
    &=\exp\bigg\{-s^{-1}\int_0^\infty \nu(\{F:\; f^\vee(F)\geq t\})dt\bigg\}\\
    &=\exp\bigg\{-s^{-1}\int f^\vee d\nu\bigg\}.
  \end{align*}
  By Lemma~\ref{lemma:int-theta-nu},
  $\ell(f)=\int fd\theta$ for $\theta$ given by
  \eqref{eq:nu-theta-1}, and so $\ell$ is comonotonic.
  
  \textsl{Necessity}. By Lemma~\ref{lemma:theta-ca-usc} and the
  Choquet theorem, there exists a unique measure $\nu$ on $\sF'$ that
  satisfies \eqref{eq:nu-theta-1}. The random sup-measure constructed
  by \eqref{eq:rep-tm-sup-pp} has the tail dependence functional $\int
  fd\theta$, which equals $\ell(f)$ by Theorem~\ref{thr:ell}.
\end{proof}

\begin{corollary}
  \label{cor:finite-lepage}
  A CRSM $X$ such that $X(\carrier)$ is almost surely
  positive and finite, can be represented as
  \begin{align}
    \label{eq:lepage-tm-finite}
    X(K)\eqd \theta(\carrier)\bigvee_{i\geq1}
    \Gamma_i^{-1}\ind_{\Xi_i\cap K\neq\emptyset},
  \end{align}
  where $\{\Xi_i,i\geq1\}$ is a sequence of i.i.d.\ a.s.\ non-empty
  random closed sets in $\carrier$ with the capacity functional
  $\Prob{\Xi_1\cap K\neq\emptyset}=\theta(K)/\theta(\carrier)$ 
  and independent of the sequence $\{\Gamma_i,i\geq 1\}$.
\end{corollary}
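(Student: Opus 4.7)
The plan is to specialize Theorem~\ref{thr:tm-sup-measures} to the case of a finite spectral measure $\nu$. First I would note that the hypothesis $X(\carrier)\in(0,\infty)$ almost surely, together with the fact that $X(\carrier)$ is unit Fr\'echet with scale parameter $\theta(\carrier)=\lim_{n\to\infty}\theta(K_n)$ along any compact exhaustion $K_n\uparrow\carrier$, forces $\theta(\carrier)\in(0,\infty)$. Since $\sF_{K_n}\uparrow\sF'$ and $\nu(\sF_{K_n})=\theta(K_n)$, this gives $\nu(\sF')=\theta(\carrier)<\infty$, so $\nu$ is a finite measure on $\sF'$. Set $\mu:=\nu/\theta(\carrier)$; this is a probability measure on $\sF'$, and by the Choquet theorem it is the law of an a.s.\ non-empty random closed set $\Xi_1$ with capacity functional $\theta(K)/\theta(\carrier)$.

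Next I would invoke the standard marking description of a Poisson process with a product intensity. Because $\nu=\theta(\carrier)\mu$ is finite, the Poisson process $\{(t_i,F_i),\,i\ge1\}$ on $\R_+\times\sF'$ with intensity $\lambda\otimes\nu$ has the same distribution as the Poisson process obtained by (i) sampling a homogeneous Poisson process on $\R_+$ of rate $\theta(\carrier)$, and (ii) independently marking each arrival by an i.i.d.\ sample $\Xi_i$ of $\mu$. Writing the arrival times of that rate-$\theta(\carrier)$ process as $t_i=\Gamma_i/\theta(\carrier)$, where $\{\Gamma_i\}$ are the successive arrivals of a unit-rate Poisson process independent of $\{\Xi_i\}$, we get $t_i^{-1}=\theta(\carrier)\,\Gamma_i^{-1}$. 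Substituting into representation~\eqref{eq:rep-tm-sup-pp} and factoring out the constant $\theta(\carrier)$ from the maximum yields~\eqref{eq:lepage-tm-finite}.

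The argument is essentially a reparametrization of the Poisson process supplied by Theorem~\ref{thr:tm-sup-measures}; there is no real obstacle. The only step requiring care is the verification that $\nu$ is finite, which is where the assumption that $X(\carrier)$ is almost surely finite (equivalently $\theta(\carrier)<\infty$) enters, allowing the normalization by $\theta(\carrier)$ to produce a bona fide probability measure $\mu$ and hence the random closed sets $\Xi_i$.
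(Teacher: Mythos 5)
Your proposal is correct and follows essentially the same route as the paper: both reduce Theorem~\ref{thr:tm-sup-measures} to the case of a finite measure $\nu$ and then identify the Poisson process with intensity $\lambda\otimes\nu$ as a unit-rate Poisson process on $\R_+$, rescaled by $\theta(\carrier)^{-1}$ and independently marked by i.i.d.\ random closed sets with the normalised capacity functional. The only difference is that you spell out the verification that $\theta(\carrier)\in(0,\infty)$ and hence $\nu(\sF')<\infty$, which the paper simply asserts.
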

\begin{proof}
  Since the measure $\nu$ on $\sF'$ related to $\theta$ by
  \eqref{eq:nu-theta-1} is finite and non-vanishing, the Poisson process
  $\{(t_i,F_i)\}$ with the intensity $\lambda\otimes\nu$ can be viewed
  as the unit intensity Poisson process $\{\Gamma_i,i\geq1\}$ on
  $\R_+$ scaled by $\theta(\carrier)^{-1}$ and independently marked by
  a sequence of random elements in $\sF'$ that are distributed
  according to the normalised $\nu$.
\end{proof}

The following result characterises CRSMs as those having the spectral
measure supported by the family $\smi'$ of non-trivial scaled indicator
sup-measures.

\begin{theorem}
  \label{thr:lepage-tm}
  A non-trivial random sup-measure $X$ is a CRSM if and only if
  \eqref{eq:lepage-sm} holds with 
  $\{Y_i,i\geq1\}$ being i.i.d.\ copies of an integrable random 
  sup-measure $Y$ with distribution supported by $\smi'$ 
  and independent of the sequence $\{\Gamma_i,i\geq 1\}$.
  The extremal coefficient functional of $X$ is
  \begin{align}
    \label{eq:theta-Y}
    \theta(K)=\E Y(K),\qquad K\in\sK. 
  \end{align}
\end{theorem}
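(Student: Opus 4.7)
The plan is to combine the LePage representation from Theorem~\ref{thr:lepage-sup-measures} with the CRSM Poisson representation from Theorem~\ref{thr:tm-sup-measures} and translate between them.

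For sufficiency, I would write $Y=C\,\ind_{F\cap\cdot\neq\emptyset}$ with random $C>0$ and random $F\in\sF'$. A direct computation from \eqref{eq:sup-integral-strict} gives $\int^{e}f\,dY=Cf^\vee(F)$ for every $f\in\USC$, so \eqref{eq:tail-x-lepage} reduces the tail dependence functional of $X$ to $\ell(f)=\E[Cf^\vee(F)]$. The core observation I would exploit is that $f\mapsto f^\vee(F)$ is itself comonotonic additive for each closed $F$: for comonotonic $f,g\in\USC$, the suprema of $f$, $g$, and $f+g$ over $F$ are attained on the compact set $F\cap\overline{\supp(f+g)}$ by upper semicontinuity, and if $x^{\ast}\in F$ maximises $f$ while $y\in F$ maximises $g$, then either $g(y)=g(x^{\ast})$ and $x^{\ast}$ is a common maximiser, or $g(y)>g(x^{\ast})$, in which case the inequality $(f(x^{\ast})-f(y))(g(x^{\ast})-g(y))\geq 0$ forces $f(y)=f(x^{\ast})$ and $y$ becomes a common maximiser. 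Hence $(f+g)^\vee(F)=f^\vee(F)+g^\vee(F)$; taking expectations makes $\ell$ comonotonic additive, so $X$ is a CRSM by definition, and the formula $\theta(K)=\E Y(K)$ drops out from $\ell(\ind_K)=\E[C\ind_{F\cap K\neq\emptyset}]$.

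For necessity, I would first invoke Theorem~\ref{thr:tm-sup-measures} to obtain $X\eqd\bigvee_i t_i^{-1}\ind_{F_i\cap\cdot\neq\emptyset}$ with $\{(t_i,F_i)\}$ Poisson on $\R_+\times\sF'$ of intensity $\lambda\otimes\nu$ and $\nu(\sF_K)=\theta(K)$, and then recast this Poisson process in marked form. Since $\nu$ is locally finite and hence $\sigma$-finite on $\sF'$, I would fix a strictly positive measurable $h:\sF'\to(0,\infty)$ with $\int h\,d\nu=1$, set $P_F=h\cdot\nu$, and define $Y=(1/h(F))\ind_{F\cap\cdot\neq\emptyset}$ for $F\sim P_F$. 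Then $Y\in\smi'$ almost surely and $\E Y(K)=\int\ind_{F\cap K\neq\emptyset}\,d\nu=\theta(K)<\infty$, so $Y$ is integrable. A short change of variables shows that the process $\{(\Gamma_i/C_i,F_i)\}$, with iid copies $(C_i,F_i)$ of $(1/h(F),F)$ and an independent unit-intensity Poisson process $\{\Gamma_i\}$, is Poisson on $\R_+\times\sF'$ with intensity $\lambda\otimes\nu$; the substantive identity is $\E[C\,\phi(F)]=\int\phi\,d\nu$ for every non-negative measurable $\phi$, which is immediate from $C=1/h(F)$ and $dP_F=h\,d\nu$. Hence $\bigvee_i\Gamma_i^{-1}Y_i=\bigvee_i(\Gamma_i/C_i)^{-1}\ind_{F_i\cap\cdot\neq\emptyset}\eqd X$, which is the required LePage representation~\eqref{eq:lepage-sm}.

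The main obstacle I anticipate is the case $\theta(\carrier)=\nu(\sF')=\infty$, where the direct normalisation of Corollary~\ref{cor:finite-lepage} is unavailable; the auxiliary density $h$ is precisely the device that absorbs the infinite total mass of $\nu$ into the random scalar $C=1/h(F)$ while keeping $Y$ integrable on every compact set, and the only non-routine step is verifying that this reshuffling preserves the Poisson intensity $\lambda\otimes\nu$ on $\R_+\times\sF'$.
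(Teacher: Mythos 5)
Your proof is correct, and the necessity half takes a genuinely different route from the paper's. For sufficiency both arguments are essentially the same: reduce to $\ell(f)=\E[\tau f^\vee(\Xi)]$ and use comonotonic additivity of $f\mapsto f^\vee(F)$; you spell out the common-maximiser argument that the paper only asserts (it states the identity $(f+g)^\vee(F)=f^\vee(F)+g^\vee(F)$ without proof in the analogous Proposition~\ref{prob:completely-random}), and your case analysis is sound. For necessity the paper localises: it takes the general LePage representation of Theorem~\ref{thr:lepage-sup-measures}, applies Corollary~\ref{cor:finite-lepage} to the restriction of $X$ to subsets of an arbitrary compact $K_0$, and invokes uniqueness of the L\'evy measure in \eqref{eq:norberg} to conclude that the spectral measure is supported by $\smi'$, letting $K_0$ grow at the end. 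You instead start from the Poisson representation of Theorem~\ref{thr:tm-sup-measures} and construct the spectral sup-measure $Y$ explicitly by a change of density: pick $h>0$ with $\int h\,d\nu=1$ (available since $\sF'=\bigcup_n\sF_{K_n}$ makes $\nu$ $\sigma$-finite and $\nu\neq 0$ by non-triviality), set $F\sim h\cdot\nu$, $C=1/h(F)$, and check that $\{(\Gamma_i/C_i,F_i)\}$ has intensity $\lambda\otimes\nu$ via $\E[C\phi(F)]=\int\phi\,d\nu$; the computation you flag as the only non-routine step does go through. Your route is more constructive and handles $\nu(\sF')=\infty$ in one stroke without the localisation/uniqueness argument, at the cost of introducing the auxiliary density $h$ (so the $Y$ you produce is one admissible choice rather than a canonical one, which is consistent with the non-uniqueness noted in Remark~\ref{rem:non-unique-Y}); the paper's route shows in addition that \emph{every} spectral measure of a CRSM is supported by $\smi'$. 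The only point both treatments leave implicit is the measurability of the parametrisation $Y=C\,\ind_{F\cap\cdot\neq\emptyset}\leftrightarrow(C,F)$, which is harmless since $C=Y(\carrier\,)$ restricted appropriately and $F=\{x:\,Y(\{x\})>0\}$.
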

\begin{proof}
  \textsl{Sufficiency} is easy to see noticing that if
  $Y(K)=\tau\ind_{\Xi\cap K\neq\emptyset}$, then $\ell(f)=\E[\tau
  f^\vee(\Xi)]$ is comonotonic additive. By \eqref{eq:tail-x-lepage},
  \begin{align*}
    \theta(K)=\E \int^e \ind_K dY=\E Y(K).
  \end{align*}
  
  \textsl{Necessity.} 
  First, $X$ admits the representation given by \eqref{eq:lepage-sm}.
  Since $X(K_0)$ is a.s.\ finite, Corollary~\ref{cor:finite-lepage}
  applies. Therefore, $X(K)$, $K\subset K_0$, admits the
  representation as the max-series built from scaled indicator
  random sup-measures. Since the L\'evy measure of $X$, i.e.\ the intensity of
  the Poisson process that appears in \eqref{eq:norberg} is unique,
  the corresponding spectral measure $\pi$ is supported by
  $\smi'$. Thus, $Y(K)$, $K\subset K_0$, almost surely belongs to the
  family $\smi'$. The conclusion follows from the fact that $K_0$ is
  arbitrary.
\end{proof}

\begin{remark}
  \label{rem:lepage-tm-non-integrable}
  The random sup-measure $Y$ in Theorem~\ref{thr:lepage-tm} can be
  represented as $Y(K)=\tau\ind_{\Xi\cap K\neq\emptyset}$.  If
  $Y(\carrier)=\tau$ is integrable, then $\theta$ is finite and the
  LePage series \eqref{eq:lepage-tm-finite} yields a version of
  $X$. Thus, the most interesting case of Theorem~\ref{thr:lepage-tm}
  corresponds to non-integrable $\tau$, where the dependency between
  $\tau$ and $\Xi$ ensures that $Y(K)$ is integrable for all
  $K\in\sK$.  For example, if $\carrier=\R_+$ and $\Xi=[\tau,\infty)$,
  then $\E Y(K)=\E[\tau\ind_{\tau\leq \sup K}]<\infty$ for $K\in\sK$,
  no matter if $\tau$ is integrable or not.
\end{remark}

\begin{example}
  \label{eq:two-integrals}
  Consider a sup-measure $\cpc(K)=\sup\{g(x):\; x\in K\}$ for an upper
  semicontinuous function $g:\carrier\mapsto[0,1]$ and let
  $Y(K)=\ind_{\Xi\cap K\neq\emptyset}$ with random closed set $\Xi$
  that has the capacity functional $\cpc$, that is $\Xi=\{x:\;
  g(x)\geq U\}$ for the uniform random variable $U$ in $[0,1]$. Then
  \eqref{eq:lepage-sm} with $Y_i$ being i.i.d.\ copies of
  $Y$ yields the CRSM $X$ with the extremal coefficient functional
  $\cpc(K)$. If $Y_i$ are chosen to be deterministic and equal $\cpc$,
  then \eqref{eq:lepage-sm} yields the max-stable random sup-measure
  $\widetilde{X}(K)=\zeta\cpc(K)$, where $\zeta$ is the unit Fr\'echet
  random variable with scale parameter one. Thus, $X$ and $\widetilde{X}$
  share the same extremal coefficient functional, while $\widetilde{X}$
  has the tail dependence functional $\int^e fd\cpc$, which is
  not comonotonic additive and so it is not a CRSM, and the CRSM $X$ has
  the tail dependence functional $\int fd\cpc$. Their extremal
  coefficients coincide, since the Choquet and extremal integrals
  return the same value on indicator functions. 
\end{example}

\begin{corollary} 
  Let $Y$ be an integrable random sup-measure.  Then $Y\in\smi$ a.s.\
  if and only if
  \begin{align}
    \label{eq:ext-choquet}
    \E \int^e fdY=\E \int fdY,\qquad f\in\USC.
  \end{align}
\end{corollary}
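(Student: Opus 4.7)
The plan is to prove the equivalence directly by a pathwise comparison of $\int^e f dY$ and $\int f dY$, then transfer the resulting equalities through a countable family of test functions. Sufficiency is immediate: if $Y(K) = \tau \ind_{\Xi \cap K \neq \emptyset}$ almost surely, then both integrals reduce by direct computation to $\tau f^\vee(\Xi)$, so \eqref{eq:ext-choquet} follows upon taking expectations.

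For necessity, I would first record the pathwise bound $\int^e f d\cpc \leq \int f d\cpc$ for every sup-measure $\cpc$ and every $f \in \USC$. This follows from \eqref{eq:sup-integral-strict} and \eqref{eq:choquet-int}: since $h(t) := \cpc(\{f \geq t\})$ is non-increasing in $t$, one has $t h(t) \leq \int_0^t h(s)\, ds \leq \int_0^\infty h(s)\, ds$, and the supremum over $t>0$ of the left-hand side is $\int^e f d\cpc$ while the right-hand side is $\int f d\cpc$. Applied pathwise to $Y$, hypothesis \eqref{eq:ext-choquet} forces the non-negative random variable $\int f dY - \int^e f dY$ to have vanishing expectation, hence to vanish almost surely, for each individual $f \in \USC$.

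To upgrade to an $f$-uniform statement, I would introduce a countable family of test functions. Since $\carrier$ is locally compact Hausdorff and second countable, it admits a countable base $\{B_n\}$ of relatively compact open sets whose closures form a countable family $\mathcal{K}_0$ of compacts. Consider $\mathcal{F}_0 = \{ a \ind_{K_1} \vee b \ind_{K_2} : a, b \in \mathbb{Q}_+,\ K_1, K_2 \in \mathcal{K}_0,\ K_1 \cap K_2 = \emptyset\}$. A short calculation using \eqref{eq:2.2} shows that for $a > b > 0$ the quantity $\int f d\cpc - \int^e f d\cpc$ is strictly positive precisely when $0 < \cpc(K_1) < \cpc(K_2)$; swapping $K_1$ and $K_2$ within $\mathcal{F}_0$ gives the symmetric statement. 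Intersecting the countably many null sets, outside a single null set one obtains the dichotomy: for any two disjoint $K_1, K_2 \in \mathcal{K}_0$, either $\cpc(K_1) = \cpc(K_2)$ or $\cpc(K_1) \cpc(K_2) = 0$.

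This dichotomy then forces the sup-derivative $g(x) := \cpc(\{x\})$ to take at most one positive value $\tau$, yielding $g = \tau \ind_F$ with $F = \{g = \tau\}$ closed, which is exactly $\cpc \in \smi$. Indeed, if $g(x_1) = g_1$ and $g(x_2) = g_2$ were two distinct positive values, then upper semicontinuity of $g$, Hausdorff separation, and local compactness would allow one to choose disjoint base elements $B_n \ni x_1$ and $B_m \ni x_2$ whose compact closures satisfy $g_1 \leq \cpc(\overline{B_n}) \leq g_1 + \varepsilon$ and $g_2 \leq \cpc(\overline{B_m}) \leq g_2 + \varepsilon$ for any small $\varepsilon$, contradicting the dichotomy. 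I expect this final extraction to be the delicate step, as it must reconcile the fixed countable base with the upper semicontinuity of $g$ so that the $\cpc$-values on the chosen closures remain strictly distinct; once this is in place, the preceding calculation gives the contradiction and hence $Y \in \smi$ almost surely.
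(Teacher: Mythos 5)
Your proof is correct, but it takes a genuinely different route from the paper's. For the direction ``$Y\in\smi$ a.s.\ implies \eqref{eq:ext-choquet}'' the paper passes through the LePage construction: it identifies $\E\int^e f\,dY$ as the tail dependence functional of the CRSM built by \eqref{eq:lepage-sm} and then combines \eqref{eq:theta-Y} with \eqref{eq:choquet-infinite} and Fubini; your pathwise computation of both integrals as $\tau f^\vee(\Xi)$ is more elementary and equally valid. For the converse the paper again builds the max-stable sup-measure $X$ from $Y$, notes that the right-hand side of \eqref{eq:ext-choquet} is comonotonic additive so that $X$ is a CRSM, and then invokes the necessity part of Theorem~\ref{thr:lepage-tm} (hence the uniqueness of the L\'evy measure) to push the indicator structure back onto $Y$. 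You instead prove a purely deterministic statement: the pointwise bound $\int^e f\,d\cpc\le\int f\,d\cpc$, valid for every sup-measure, turns the hypothesis into almost-sure pathwise equalities over a countable test class, and the explicit computation for $f=a\ind_{K_1}\vee b\ind_{K_2}$ with disjoint compacts yields the dichotomy that forces the sup-derivative to take at most one positive value, i.e.\ $\cpc\in\smi$. This buys self-containedness — it bypasses the spectral-measure machinery and its non-uniqueness subtleties entirely — and it isolates the stronger deterministic fact that a single sup-measure satisfies $\int^e f\,d\cpc=\int f\,d\cpc$ for all $f\in\USC$ precisely when it is a scaled indicator; the price is the topological bookkeeping in the last step (producing disjoint compact base neighbourhoods shrinking to $x_1$ and $x_2$ whose $\cpc$-values approach the sup-derivative), which is standard in a locally compact Hausdorff second countable space via the upper semicontinuity $\cpc(K_n)\downarrow\cpc(K)$ for $K_n\downarrow K$, and which you correctly flag and resolve.
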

\begin{proof}
  \textsl{Necessity.}  Since the left-hand side of
  \eqref{eq:ext-choquet} is the tail dependence function $\ell(f)$ of a CRSM
  constructed by \eqref{eq:lepage-sm}, it is comonotonic
  additive. The statement follows from \eqref{eq:theta-Y} and
  \eqref{eq:choquet-infinite}, so that
  \begin{align*}
    \ell(f)=\int fd\theta=\int_0^\infty \E Y(\{f\geq t \}) dt
    =\E \int fdY. 
  \end{align*}

  \textsl{Sufficiency.}  If (\ref{eq:ext-choquet}) holds, the
  left-hand side of (\ref{eq:ext-choquet}) is the tail dependence
  functional of a random sup-measure $X$ constructed by
  (\ref{eq:lepage-sm}). Since the right-hand side of
  (\ref{eq:ext-choquet}) is comonotonic additive, $X$ is a CRSM. It
  follows from Theorem~\ref{thr:lepage-tm} that $Y\in \smi$ a.s.
\end{proof}

\vspace{10mm}
\section{Dual representations}
\label{sec:dual-representation}

The following result provides a dual representation for tail
dependence functionals of max-stable random sup-measures. Denote by $\MM$ the
family of Radon measures on the Borel $\sigma$-algebra
$\sB$ in $\carrier$.

\begin{theorem}
  \label{thr:duality}
  Let $X$ be a max-stable random sup-measure. Then
  \begin{align}
    \label{eq:ell-dual}
    \ell(f)=\sup_{\mu\in\bM} \int fd\mu\,,\qquad f\in\USC,
  \end{align}
  for a convex family 
  \begin{align}
    \label{eq:ell-dual-measures}
    \bM=\{\mu\in\MM:\; \int fd\mu\leq \ell(f),\; f\in\USC\}.  
  \end{align}
\end{theorem}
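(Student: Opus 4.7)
Convexity of $\bM$ is immediate from linearity of integration in $\mu$, and the bound $\sup_{\mu\in\bM}\int f d\mu \leq \ell(f)$ is built into the definition of $\bM$. The substance is the reverse bound: for each $f_0 \in \USC$, I need to exhibit some $\mu \in \bM$ with $\int f_0 d\mu = \ell(f_0)$.

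My plan is a Hahn--Banach argument on the real vector space $C_c(\carrier)$ of real-valued continuous functions with compact support. First I would introduce the sublinear functional $p(h) := \ell(h^+)$; sublinearity uses $(h_1+h_2)^+ \leq h_1^+ + h_2^+$ together with monotonicity, homogeneity and subadditivity of $\ell$ on $\USC$. For $f_0 \in \contf$ the one-dimensional functional $L_0(cf_0) := c\ell(f_0)$ on $\R f_0$ sits beneath $p$ (equality at $c \geq 0$, and at $c < 0$ one uses $(cf_0)^+ = 0$), so Hahn--Banach extends it to a linear $L \leq p$ on all of $C_c(\carrier)$ with $L(f_0) = \ell(f_0)$. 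Positivity of $L$ on non-negative $h$ follows from $p(-h) = 0$, and the Riesz--Markov theorem produces a Radon measure $\mu$ with $L(h) = \int h d\mu$. The inequality $\int h d\mu \leq \ell(h)$ would then extend from $\contf$ to all of $\USC$ by approximating $h \in \USC$ from above by $h^{(m)} \in \contf$ and combining Lemma~\ref{lemma:approx} for $\ell$ with monotone convergence under $\mu$. Hence $\mu \in \bM$ and $\int f_0 d\mu = \ell(f_0)$ for continuous $f_0$.

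To handle a general $f_0 \in \USC$, I would approximate it from above by $f_0^{(n)} \in \contf$ with supports in a fixed compact $K_0$, obtain corresponding $\mu_n \in \bM$ with $\int f_0^{(n)} d\mu_n = \ell(f_0^{(n)})$ from the previous step, and pass to a limit. The uniform bound $\mu_n(K) \leq \ell(\ind_K) = \theta(K) < \infty$ for every compact $K$ gives local tightness and mass-boundedness, so after restricting to compacts and diagonalising a vague limit $\mu_{n_k} \to \mu$ exists, and $\mu$ again lies in $\bM$ by the same $\contf$-to-$\USC$ approximation. The decisive diagonal estimate is
\[
\ell(f_0^{(n_k)}) = \int f_0^{(n_k)} d\mu_{n_k} \leq \int f_0^{(m)} d\mu_{n_k}
\]
(valid for $n_k \geq m$ since then $f_0^{(n_k)} \leq f_0^{(m)}$); letting $k \to \infty$, with $f_0^{(m)}$ continuous and compactly supported on the right and upper semicontinuity of $\ell$ on the left, gives $\ell(f_0) \leq \int f_0^{(m)} d\mu$, and then $m \to \infty$ with monotone convergence under $\mu$ yields $\ell(f_0) \leq \int f_0 d\mu$, matching the reverse bound from $\mu \in \bM$.

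The hard part will be this diagonal limit: one cannot deduce $\int f_0^{(n_k)} d\mu_{n_k} \to \int f_0 d\mu$ directly from vague convergence, because $f_0$ is only upper semicontinuous. The trick is to split the limit into two successive monotone steps by inserting the continuous surrogates $f_0^{(m)}$ between $f_0$ and $f_0^{(n_k)}$, which turns the obstacle into a straightforward comparison.
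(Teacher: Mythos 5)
Your argument is correct, but it takes a genuinely different route from the paper. The paper's proof is essentially a citation: it observes that $\ell$ restricted to $\contf$ is a capacity in the sense of Fuglede, invokes \cite[Th.~5.3]{fug71} for the dual representation on $\contf$, checks via upper approximation and the upper semicontinuity of $\ell$ that Fuglede's canonical extension to $\USC$ coincides with $\ell$, and then applies \cite[Th.~5.5]{fug71}. Your Hahn--Banach construction with the sublinear majorant $p(h)=\ell(h^+)$, positivity from $p(-h)=0$, and Riesz--Markov re-proves the $\contf$ case of Fuglede's theorem from scratch; the ingredients you need are all available in the paper (homogeneity and upper semicontinuity from Theorem~\ref{thr:ell-sup-measure}, subadditivity from the unnumbered proposition in Section~\ref{sec:max-stable-random}, and monotonicity, which you should note follows from the first-order max-alternation inequality $\ell(f)\leq\ell(f\vee g)$). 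Your handling of general $f_0\in\USC$ via vague compactness is also sound: the bound $\mu_n(K)\leq\ell(\ind_K)=\theta(K)$ gives the local boundedness needed for a vague limit, and inserting the continuous surrogates $f_0^{(m)}$ between $f_0$ and $f_0^{(n_k)}$ correctly circumvents the failure of $\int f_0\,d\mu_{n_k}\to\int f_0\,d\mu$ for merely upper semicontinuous integrands. In fact your argument yields slightly more than the statement requires, namely that the supremum in \eqref{eq:ell-dual} is attained for every $f\in\USC$. The trade-off is self-containedness versus brevity: the paper's proof is a few lines resting on \cite{fug71}, whereas yours is longer but elementary and independent of that reference.
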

\begin{proof}
  The tail dependence functional restricted to the family $\contf$ of
  continuous functions on $\carrier$ with compact support is a
  capacity in the sense of \cite[Def.~4.1]{fug71}. By
  \cite[Th.~5.3]{fug71}, \eqref{eq:ell-dual} holds for all 
  $f \in \contf$ with $\bM$ replaced by
  \begin{align*}
    \bM_c=\{\mu\in\MM:\; \int fd\mu\leq \ell(f),\; f\in\contf\}.  
  \end{align*}
  It follows from \cite[Th.~3.13]{al:bor06} and Urysohn's lemma that, 
  for all $f \in \USC$, there exists a
  sequence of functions $\{f_n,n\geq1\}$ from $\contf$ approximating $f$
  from above. Then the upper semicontinuity and Fatou's
  lemma yield that $\bM_c=\bM$. Hence, \eqref{eq:ell-dual} holds for
  all $f \in \contf$.

  In \cite[Def.~4.2]{fug71}, the functional on $\contf$ is extended to
  $\USC$ by approximation from above.  In view of the
  existence of a sequence of continuous functions approximating
  $f\in\USC$ from above, and the
  upper semicontinuity of $\ell$, we deduce that this extension of
  $\ell$ from $\contf$ to $\USC$ coincides with the originally defined
  $\ell$. By \cite[Th.~5.5]{fug71}, $\ell(f)$ is given by
  \eqref{eq:ell-dual}.
\end{proof}

\begin{remark}
  The functional \eqref{eq:ell-dual} constructed for an arbitrary
  convex family $\bM$ may fail to satisfy the complete alternation
  property, and so is not necessarily the tail dependence functional
  of a max-stable random sup-measure.
\end{remark}

\begin{proposition}
  \label{prop:tm-sets-dual}
  The functional \eqref{eq:ell-dual} is the tail dependence functional
  of a CRSM $X$ with extremal coefficient functional $\theta$ 
  if and only if $\bM=\bM_\theta$, where 
  \begin{align}\label{eq:theta-dual-measures}
    \bM_\theta=\{\mu\in\MM:\; \mu(K)\leq \theta(K),\; K\in \sK\}.  
  \end{align}
\end{proposition}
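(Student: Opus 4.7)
The plan is to reduce both directions of the equivalence to the classical Schmeidler-type dual representation of the Choquet integral with respect to a submodular capacity, namely
\begin{align*}
  \int f d\theta = \sup_{\mu\in\bM_\theta} \int f d\mu,\qquad f\in\USC.
\end{align*}
This identity is available to us because, by Lemma~\ref{lemma:theta-ca-usc}, an extremal coefficient functional $\theta$ is completely alternating and upper semicontinuous on $\sK$; in particular, it is strongly subadditive (equivalently, $2$-alternating or concave), which is exactly the hypothesis that makes the Choquet integral a support functional of the core $\bM_\theta$ of Radon measures dominated by $\theta$ on compact sets. Once this duality is in place, the proposition is nearly immediate.

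For the \emph{only if} direction, assume $X$ is a CRSM with extremal coefficient functional $\theta$. Theorem~\ref{thr:ell} gives $\ell(f)=\int f d\theta$, so the duality yields $\bM_\theta\subseteq\bM$ directly from \eqref{eq:ell-dual-measures}. For the reverse inclusion, I would test the defining inequality of $\bM$ on the indicator functions $f=\ind_K$, which belong to $\USC$ for any $K\in\sK$; this gives $\mu(K)=\int \ind_K d\mu\leq \ell(\ind_K)=\theta(K)$ for all $K\in\sK$, so $\mu\in\bM_\theta$.

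For the \emph{if} direction, assume $\bM=\bM_\theta$ for some functional $\theta$ satisfying the hypotheses (in particular an extremal coefficient functional). Then the duality gives $\ell(f)=\sup_{\mu\in\bM_\theta}\int f d\mu=\int f d\theta$, and Theorem~\ref{thr:ell} identifies this $\ell$ as the tail dependence functional of a (necessarily unique) CRSM with extremal coefficient functional $\theta$.

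The main obstacle is establishing the duality $\int f d\theta=\sup_{\mu\in\bM_\theta}\int f d\mu$ in this locally compact, possibly unbounded setting. The inequality ``$\geq$'' is the easy half and reduces, for any $\mu\in\bM_\theta$ and $f\in\USC$, to the pointwise bound $\mu(\{f\geq t\})\leq \theta(\{f\geq t\})$ (the superlevel sets of $f\in\USC$ are closed subsets of the compact support closure of $f$, hence compact) followed by integration in $t$ using \eqref{eq:choquet-int}. The nontrivial ``$\leq$'' direction is the Schmeidler selection statement; to handle the possibly infinite $\theta(\carrier)$ I would localise by restricting to a compact set $K_n$ containing the support of $f$, apply the bounded Schmeidler representation to the finite capacity $\theta\restriction K_n$ to produce a finite measure $\mu_n$ on $K_n$ attaining $\int f d\theta$, and then regard $\mu_n$ as an element of $\bM_\theta$ by extension by zero. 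The compatibility of $\mu_n$ across $n$ is not needed because only the supremum matters.
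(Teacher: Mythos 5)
Your proof is correct and follows essentially the same route as the paper: both directions hinge on the duality $\int f\,d\theta=\sup_{\mu\in\bM_\theta}\int f\,d\mu$ for the strongly subadditive upper semicontinuous capacity $\theta$, which the paper imports ready-made from Graf's Radon--Nikodym theorem for capacities (Prop.~2.3 there) instead of re-deriving it from Schmeidler's theorem by localisation as you sketch. The only cosmetic difference is that for the inclusion $\bM_\theta\subseteq\bM$ you apply the layer-cake bound $\mu(\{f\geq t\})\leq\theta(\{f\geq t\})$ directly, whereas the paper discretises $f$ by step functions on nested compact sets and invokes the comonotonic additivity of $\ell$ --- the same mechanism in discrete form.
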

\begin{proof}
  \textsl{Necessity.} By letting $f=\ind_K$ in
  \eqref{eq:ell-dual-measures}, it is easily seen that
  $\bM\subset\bM_\theta$. If $X$ is a CRSM, then its tail
  dependence functional has the dual representation
  \eqref{eq:ell-dual} with the family $\bM$ given by
  \eqref{eq:ell-dual-measures}. By Lemma~\ref{lemma:approx} and
  Fatou's lemma, $\bM$ is the family of all $\mu\in\MM$ such that
  $\int fd\mu \leq \ell(f)$ for all step-functions $f=\sum
  a_i\ind_{K_i}$ with $a_1,\dots,a_n>0$ and $K_1\supset
  K_2\supset\cdots\supset K_n$. The comonotonic additivity of $\ell$
  yields that $\int fd\mu\leq \ell(f)$ for such functions $f$ if and
  only if $\mu(K)\leq \ell(\ind_K)=\theta(K)$ for all $K\in\sK$, i.e.\
  $\mu\in\bM_\theta$ whenever $\mu\in\bM$.

  \textsl{Sufficiency.} If $\bM=\bM_\theta$, then
  \cite[Prop.~2.3]{graf80} yields that $\ell(f)= \int fd\theta$, which
  is the tail dependence functional of the CRSM with extremal
  coefficient functional $\theta$, cf.\ Theorem~\ref{thr:ell}.
\end{proof}

\begin{corollary}
  \label{cor:domination}
  Among all laws of max-stable random sup-measures sharing the same extremal coefficient 
  functional $\theta$, the (necessarily unique) CRSM law has the largest tail dependence functional.
\end{corollary}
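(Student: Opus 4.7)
The plan is to combine the dual representation Theorem~\ref{thr:duality} with the characterisation of the CRSM dual family given in Proposition~\ref{prop:tm-sets-dual}, and conclude by a set inclusion argument. First I would set up notation: let $X$ be an arbitrary max-stable random sup-measure with extremal coefficient functional $\theta$ and tail dependence functional $\ell$, and let $\ell_\theta(f)=\int f d\theta$ denote the CRSM tail dependence functional associated with $\theta$ (which exists and is unique in distribution by Lemma~\ref{lemma:theta-ca-usc} and Theorem~\ref{thr:ell}).

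Next I would invoke Theorem~\ref{thr:duality} to write
\begin{align*}
  \ell(f)=\sup_{\mu\in\bM}\int f d\mu,\qquad f\in\USC,
\end{align*}
where $\bM$ is the convex family in \eqref{eq:ell-dual-measures}. The key observation is the inclusion $\bM\subseteq\bM_\theta$: indeed, for any $\mu\in\bM$ and any $K\in\sK$, testing against $f=\ind_K\in\USC$ yields $\mu(K)\leq \ell(\ind_K)=\theta(K)$, which is precisely the defining inequality of $\bM_\theta$ in \eqref{eq:theta-dual-measures}.

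By Proposition~\ref{prop:tm-sets-dual} applied to the CRSM with extremal coefficient functional $\theta$, the dual family is exactly $\bM_\theta$ and the associated tail dependence functional admits the dual representation
\begin{align*}
  \ell_\theta(f)=\sup_{\mu\in\bM_\theta}\int f d\mu,\qquad f\in\USC.
\end{align*}
Combining the inclusion $\bM\subseteq\bM_\theta$ with both dual formulas gives $\ell(f)\leq \ell_\theta(f)$ for every $f\in\USC$, which is the desired domination. Uniqueness of the dominating law comes for free from Theorem~\ref{thr:ell}, which says $\theta$ determines the CRSM law.

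There is no genuine obstacle here: all the heavy lifting has been done in Theorem~\ref{thr:duality} and Proposition~\ref{prop:tm-sets-dual}, and the argument reduces to recognising that passing from $\ell$ to its values on indicators $\theta$ only enlarges the dual family, hence enlarges the supremum. The only point worth double-checking is that the dual representation for CRSMs is formulated for all $f\in\USC$ (not just continuous $f$), which is exactly what Proposition~\ref{prop:tm-sets-dual} supplies.
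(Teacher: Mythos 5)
Your argument is correct and is essentially the paper's own proof: the paper likewise derives the domination from Theorem~\ref{thr:duality} and Proposition~\ref{prop:tm-sets-dual} via the inclusion $\bM\subseteq\bM_\theta$, obtained by testing \eqref{eq:ell-dual-measures} against indicators $\ind_K$. Your write-up merely spells out the same steps in more detail.
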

\begin{proof}   
  The assertion follows from Theorem~\ref{thr:duality} and
  Proposition~\ref{prop:tm-sets-dual}, since $\bM_\theta$ from
  \eqref{eq:theta-dual-measures} includes the family $\bM$ given by
  \eqref{eq:ell-dual-measures} if $\ell(\ind_K)=\theta(K)$ for $K \in
  \sK$.
\end{proof}

\begin{remark}
  If $\theta(\carrier)=1$, then $\bM_\theta$ can be further restricted
  to consist of probability distributions of all selections of the
  random closed set $\Xi$ with the capacity functional $\theta$, that
  is random elements $\xi$ in $\carrier$ such that $\xi$ and $\Xi$ can
  be realised on the same probability space to ensure that $\xi\in\Xi$
  a.s.
\end{remark}

\begin{remark}
  \label{rk:cash-inv}
  The value $X(\carrier)$ is a.s.\ finite if and only if the total
  mass of all measures from $\bM$ in \eqref{eq:ell-dual} is uniformly
  bounded. However, even in this case, $\ell(f+a)$
  is not necessarily equal to $\ell(f)+a\ell(1)$ for $a\in\R_+$, since 
  the measures $\mu$ in \eqref{eq:ell-dual} may have varying total
  masses. Max-stable random sup-measures satisfying
  $\ell(f+a)=\ell(f)+a\ell(1)$ for all $a\in\R_+$ form a family
  sandwiched between the CRSM and general max-stable random sup-measures.
  If $\ell(1)=1$, then the functional $\ell(-f)$ has the properties of
  a coherent risk measure, see \cite{delb12,foel:sch04}. In
  particular, the subadditivity property shows that diversification
  reduces risks, and $\ell(-(f+a))=\ell(-f)-a$ is called the
  cash-invariance property. This property makes it possible to extend
  $\ell$ onto the family of all bounded measurable functions. 
\end{remark}

\vspace{10mm}
\section{Complete randomness}
\label{sec:complete-randomness}

Recall that a random sup-measure is said to be \emph{completely
  random} if it assumes jointly independent values on disjoint
sets. Hence, the tail dependence functional $\ell$ of a max-stable completely random sup-measure is finitely additive on linear combinations of indicator functions of disjoint sets, and, by approximation, is
finitely additive on $\USC$. The
upper semicontinuity property yields that $\ell(f)=\int fd\mu$ for a Radon
measure $\mu$ (called \emph{control measure}) 
that necessarily coincides with the extremal coefficient
functional $\theta$. Conversely, if $\theta$ is finitely additive, then it
corresponds to a max-stable completely random sup-measure. This yields the
following result.

\begin{proposition}
  \label{prop:cr-iff-theta-add} 
  Let $X$ be a max-stable random sup-measure 
  with extremal coefficient functional
  $\theta$.  Then the following are equivalent:
  \begin{enumerate}[(i)]
  \item $X$ is completely random.
  \item $\theta$ is finitely additive.
  \item $\theta$ is a Radon measure $\mu$.
  \item The tail dependence functional of $X$ admits the
    representation~\eqref{eq:ell-dual} with $\bM$ being a singleton
    $\bM=\{\mu\}$.
  \end{enumerate}
\end{proposition}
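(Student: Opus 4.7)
My plan is to prove the equivalences as the cycle (i)$\Rightarrow$(ii)$\Rightarrow$(iii)$\Rightarrow$(iv)$\Rightarrow$(i), with the real work concentrated in (iii)$\Rightarrow$(iv). For (i)$\Rightarrow$(ii), I fix disjoint $K_1, K_2 \in \sK$ and observe that $X(K_1 \cup K_2) = X(K_1) \vee X(K_2)$ by maxitivity; complete randomness makes this the maximum of two independent unit Fr\'echet variables with scales $\theta(K_1)$ and $\theta(K_2)$, which is unit Fr\'echet with scale $\theta(K_1) + \theta(K_2)$, so finite additivity of $\theta$ follows by induction. For (ii)$\Rightarrow$(iii), I combine the upper semicontinuity and complete alternation of $\theta$ from Lemma~\ref{lemma:theta-ca-usc} with finite additivity: upper semicontinuity on compacts amounts to continuity from above along $K_n \downarrow K$, and together with additivity and local finiteness this allows a standard extension of $\theta$ to a unique $\sigma$-additive Radon measure $\mu$ on $\sB$ with $\mu|_\sK = \theta$.

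The main step is (iii)$\Rightarrow$(iv). Since $\theta = \mu$ is a Radon measure, the Choquet integral $\int f\,d\theta$ coincides with the Lebesgue integral $\int f\,d\mu$, which is linear and hence comonotonic additive. By Theorem~\ref{thr:ell}, $f \mapsto \int f\,d\mu$ is the tail dependence functional of the unique CRSM with extremal coefficient $\theta$, and Corollary~\ref{cor:domination} gives $\ell(f) \leq \int f\,d\mu$ for our $X$. For the matching lower bound I appeal to Theorem~\ref{thr:duality}, writing $\ell(f) = \sup_{\mu' \in \bM} \int f\,d\mu'$ with $\bM \subseteq \{\mu' \in \MM : \mu'(K) \leq \mu(K) \text{ for all } K \in \sK\}$; by inner regularity each such $\mu'$ in fact satisfies $\mu' \leq \mu$ on all Borel sets. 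Given nested compacts $L_1 \supset L_2 \supset \cdots \supset L_m$ and $\eps > 0$, the identity $\sup_{\mu' \in \bM} \mu'(L_1) = \ell(\ind_{L_1}) = \mu(L_1)$ produces some $\mu'_\eps \in \bM$ with $\mu'_\eps(L_1) \geq \mu(L_1) - \eps$; since $\mu'_\eps(L_1 \setminus L_k) \leq \mu(L_1 \setminus L_k)$ and both $\mu, \mu'_\eps$ are additive, one obtains $\mu'_\eps(L_k) = \mu'_\eps(L_1) - \mu'_\eps(L_1 \setminus L_k) \geq \mu(L_k) - \eps$ for every $k$. Consequently $\int f\,d\mu'_\eps \geq \int f\,d\mu - \eps\sum_k b_k$ for the step function $f = \sum_k b_k \ind_{L_k}$, and sending $\eps \to 0$ yields $\ell(f) \geq \int f\,d\mu$. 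Approximating an arbitrary $f \in \USC$ from above by nested step functions via the layer-cake decomposition and passing to the limit with Lemma~\ref{lemma:approx} on the left and monotone convergence on the right extends the identity $\ell(f) = \int f\,d\mu$ to all of $\USC$, so (iv) holds with $\bM = \{\mu\}$.

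Finally, for (iv)$\Rightarrow$(i), if $\ell(f) = \int f\,d\mu$ then for disjoint $K_1, \ldots, K_n$ and $u_i \geq 0$ the scale parameter $\ell\bigl(\sum_i u_i \ind_{K_i}\bigr) = \sum_i u_i \mu(K_i) = \sum_i u_i \theta(K_i)$ is precisely the tail dependence function of independent unit Fr\'echet variables with scales $\theta(K_i)$; Lemma~\ref{lemma:distr-sup-measure} then forces joint independence of $X(K_1), \ldots, X(K_n)$, i.e.\ complete randomness. The main obstacle is the reverse inequality in (iii)$\Rightarrow$(iv): Theorem~\ref{thr:duality} only supplies measures $\mu'$ componentwise dominated by $\mu$, so one cannot pick $\mu'=\mu$ outright, and the additivity of $\mu$ is exactly what propagates near-optimality at the top $L_1$ of the nested chain down to each $L_k$ simultaneously---a propagation that would fail if $\theta$ were only subadditive.
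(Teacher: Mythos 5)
Your proof is correct, but it is organised quite differently from the paper's. The paper disposes of the proposition in the paragraph preceding its statement: complete randomness makes $\ell$ finitely additive on max-linear combinations of indicators of disjoint sets, hence (by approximation) additive on all of $\USC$, and the upper semicontinuity of $\ell$ then yields a Riesz-type representation $\ell(f)=\int f\,d\mu$ for a Radon measure $\mu$ that must equal $\theta$; the converse direction is only asserted (``if $\theta$ is finitely additive, then it corresponds to a max-stable completely random sup-measure''). You instead route everything through $\theta$: the forward implications (i)$\Rightarrow$(ii)$\Rightarrow$(iii) via maxitivity, independence of Fr\'echet maxima and the standard extension of a regular content to a Radon measure, and then the substantive implication (iii)$\Rightarrow$(iv) via the dual representation of Theorem~\ref{thr:duality} combined with Corollary~\ref{cor:domination} for the upper bound and an $\eps$-propagation along nested chains $L_1\supset\cdots\supset L_m$ for the lower bound. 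That last argument is exactly the point the paper glosses over: the equivalence requires that \emph{every} max-stable $X$ whose $\theta$ is additive has $\ell=\int\cdot\,d\mu$, not merely that some completely random sup-measure with that $\theta$ exists, and your observation that additivity of $\mu$ transfers near-optimality at $L_1$ to all the $L_k$ simultaneously supplies a clean proof of this. (One could alternatively extract the same lower bound directly from the order-two max-complete alternation of $\ell$, which is closer in spirit to the paper's toolkit, but your duality argument is equally valid.) The paper's approach is shorter because the Riesz representation for additive upper semicontinuous functionals does the work in one stroke; yours is longer but makes the logical dependence on Theorems~\ref{thr:duality} and~\ref{thr:ell} and Corollary~\ref{cor:domination} explicit and fully justifies the implication (ii)$\Rightarrow$(i).
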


Each max-stable completely random sup-measure $X$ is a CRSM,
and each CRSM becomes completely random if uplifted to the space
$\sF'$ of non-empty closed sets.

\begin{proposition}
  \label{prob:completely-random}
  A max-stable random sup-measure $X$ on $\carrier$ is a CRSM if and only if
  $X(K)=Z(\sF_K)$, $K\in\sK$, for a max-stable completely random 
  sup-measure $Z$ on $\sF'$.
\end{proposition}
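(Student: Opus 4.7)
The plan is to obtain both implications from the LePage-type series representation of Theorem~\ref{thr:tm-sup-measures}, reinterpreted on the space $\sF'$ of non-empty closed subsets of $\carrier$, which I equip with the Fell topology so that it becomes locally compact Hausdorff second countable. The key observation is that a completely random max-stable sup-measure on any such state space admits a Poisson--LePage representation whose atoms are scaled indicators of singletons, and this is exactly the shape of the measure $\lambda\otimes\nu$ on $\R_+\times\sF'$ that drives a general CRSM.

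For necessity, I would start from a CRSM $X$ on $\carrier$ with extremal coefficient functional $\theta$. Theorem~\ref{thr:tm-sup-measures} supplies a locally finite measure $\nu$ on $\sF'$ with $\nu(\sF_K)=\theta(K)$ for all $K\in\sK$, together with a Poisson process $\{(t_i,F_i),i\geq1\}$ on $\R_+\times\sF'$ of intensity $\lambda\otimes\nu$, such that
\begin{align*}
  X(K)\eqd \bigvee_{i\geq1} t_i^{-1}\ind_{F_i\cap K\neq\emptyset}\,,
  \qquad K\in\sK.
\end{align*}
I would then define $Z(A):=\bigvee_{i\geq1}t_i^{-1}\ind_{F_i\in A}$ for Borel $A\subset\sF'$. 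Because disjoint Borel subsets of $\sF'$ collect contributions from disjoint portions of the Poisson point process, $Z$ is a max-stable completely random sup-measure on $\sF'$ with control measure $\nu$, and by construction $Z(\sF_K)=X(K)$ jointly in $K\in\sK$.

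For sufficiency, I would assume $X(K)=Z(\sF_K)$ for some max-stable completely random sup-measure $Z$ on $\sF'$ with control measure $\mu$, and invoke the Poisson representation of a completely random max-stable sup-measure on a locally compact Hausdorff second countable space: $Z(A)\eqd\bigvee_{i\geq1}t_i^{-1}\ind_{F_i\in A}$, where $\{(t_i,F_i)\}$ is Poisson on $\R_+\times\sF'$ of intensity $\lambda\otimes\mu$. Specialising to $A=\sF_K$ reproduces the representation \eqref{eq:rep-tm-sup-pp} with extremal coefficient functional $K\mapsto\mu(\sF_K)$, so the converse direction of Theorem~\ref{thr:tm-sup-measures} identifies $X$ as a CRSM.

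The main technical point that needs care is the change of carrier space: I need to confirm that $\sF'$ under the Fell topology fits the framework of Section~\ref{sec:capac-sup-meas}, and that the locally finite measure $\nu$ on $\sF'$ produced by the Choquet theorem genuinely qualifies as the control measure of a completely random sup-measure on $\sF'$ realising $Z(\sF_K)=X(K)$. Once these set-up issues are settled, the argument is simply a translation between the Poisson descriptions on $\carrier$ and on $\sF'$.
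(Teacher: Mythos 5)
Your proof is correct, and your necessity direction (CRSM $\Rightarrow$ existence of $Z$) is essentially the paper's: both take the Poisson process $\{(t_i,F_i)\}$ from Theorem~\ref{thr:tm-sup-measures} and read off $Z(\sM)=\sup\{t_i^{-1}:F_i\in\sM\}$, with complete randomness coming from the independence properties of the Poisson process. Your sufficiency direction, however, takes a genuinely different route. You first establish a Poisson--LePage representation of the completely random $Z$ with atoms that are scaled indicators of singletons $\{F\}\subset\sF'$, push it through $A=\sF_K$ to recover \eqref{eq:rep-tm-sup-pp}, and then cite the converse of Theorem~\ref{thr:tm-sup-measures}. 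The paper instead stays at the level of tail dependence functionals: from $X(K)=Z(\sF_K)$ and (the argument of) Lemma~\ref{lemma:int-theta-nu} it gets $\int^e f\,dX=\int^e f^\vee\,dZ$, hence $\ell(f)=\int f^\vee\,d\nu$ with $\nu$ the control measure of $Z$, and comonotonic additivity drops out of the elementary identity $(f+g)^\vee=f^\vee+g^\vee$ for comonotonic $f,g$. The paper's route is shorter and sidesteps the one ingredient your argument leans on without proof: that every max-stable completely random sup-measure on the carrier $\sF'$ has Lévy measure concentrated on scaled singleton indicators. That fact is true and provable within the paper's toolkit --- by Proposition~\ref{prop:cr-iff-theta-add} the extremal coefficient functional of $Z$ is a Radon measure $\mu$, and the uniqueness in the Choquet theorem forces the associated measure on non-empty closed subsets of $\sF'$ to be the image of $\mu$ under $F\mapsto\{F\}$ --- but you should say this explicitly, together with the set-up points you already flag (that $\sF'$ with the Fell topology is locally compact Hausdorff second countable, that each $\sF_K$ is compact in $\sF'$ and every compact subset of $\sF'$ sits inside some $\sF_K$, so finiteness of $\mu$ on the $\sF_K$ gives local finiteness). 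What your approach buys is a symmetric, fully constructive correspondence between the two Poisson descriptions; what the paper's buys is economy, since it never needs the structure of the Lévy measure of $Z$.
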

\begin{proof}
  It follows from \eqref{eq:rep-tm-sup-pp} that $X(K)$ is obtained as
  $Z(\sF_K)$ for 
  \begin{align*}
    Z(\sM)=\sup\{t_i^{-1}:\; F_i\in\sM\} 
  \end{align*}
  for each measurable $\sM\subset\sF'$. Since $\{(t_i,F_i)\}$ is a
  Poisson process, the random sup-measure $Z$ is completely random.

  In the other direction, the equality $X(K)=Z(\sF_K)$ for all
  $K\in\sK$ yields that 
  \begin{align*}
    \int^e f d X = \int^e f^\vee d Z,
  \end{align*}
  by Lemma~\ref{lemma:int-theta-nu}, so that the tail dependence
  functional of $X$ is given by $\ell(f) = \int f^\vee d \nu$,
  where $\nu$ is the control measure of $Z$. 
  Since $(f+g)^\vee(F)=f^\vee(F)+g^\vee(F)$ for
  comonotonic functions $f$ and $g$, the functional $\ell$ is
  comonotonic. 
\end{proof}

\begin{remark}
  Proposition~\ref{prob:completely-random} together with
  Lemma~\ref{lemma:int-theta-nu} can be used to replace the integral
  $\int^e fdX$ with $\int^e f^\vee dZ$, where the latter integral is
  taken for a completely random sup-measure and so can be extended for
  all integrands $f$, such that $f^\vee$ is integrable with respect to
  the control measure of $Z$, see \cite{stoev:taq05}.
\end{remark}

\begin{theorem}
  \label{thr:cr-transform}
  For each CRSM $X$, there is a set-valued function
  $F:[0,1]\mapsto\sF$ such that $X(K)=Z(F^-(K))$ for a completely
  random sup-measure $Z$ on $[0,1]$ and $F^-(K)=\{u\in[0,1]:\;
  F(u)\cap K\neq\emptyset\}$. 
\end{theorem}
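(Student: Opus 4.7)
The plan is to parameterise the CRSM $X$ by pulling back the completely random realisation on $\sF'$ (provided by Proposition~\ref{prob:completely-random}) through a measurable multifunction into $[0,1]$. First I would invoke Proposition~\ref{prob:completely-random} together with Proposition~\ref{prop:cr-iff-theta-add} to represent $X(K) = Z_0(\sF_K)$ for a max-stable completely random sup-measure $Z_0$ on $\sF'$ whose control measure $\nu$ is the unique locally finite measure on $\sF'$ satisfying $\nu(\sF_K) = \theta(K)$, as delivered by the Choquet theorem \eqref{eq:choquet-th}. Equivalently, by Theorem~\ref{thr:tm-sup-measures}, $X$ is driven by a Poisson process $\{(t_i, F_i)\}$ on $\R_+ \times \sF'$ with intensity $\lambda \otimes \nu$.

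The heart of the argument is to construct a Borel measurable map $F : [0,1] \to \sF$ together with a Radon measure $m$ on $[0,1]$ such that the pushforward satisfies $F_* m = \nu$. Since $\carrier$ is locally compact Hausdorff second countable, $\sF$ equipped with the Fell topology is compact metrisable (hence Polish), and $\nu$ is $\sigma$-finite on the $\sigma$-compact set $\sF'$. When $\theta(\carrier) = \nu(\sF') < \infty$, the Borel isomorphism theorem yields a Borel map $F : [0,1] \to \sF'$ with $F_*(\nu(\sF') \lambda) = \nu$, where $\lambda$ is Lebesgue measure on $[0,1]$. In the infinite case I would partition $\sF'$ into countably many Borel pieces of finite $\nu$-mass and glue together parameterisations on disjoint subintervals of $[0,1]$, allowing $m$ to be $\sigma$-finite. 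Measurability of $F^-(K) = F^{-1}(\sF_K)$ for every $K \in \sK$ is then automatic, since $\sF_K$ is Borel in the Fell topology.

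Next I would let $Z$ be the completely random max-stable sup-measure on $[0,1]$ with control measure $m$, constructed from a Poisson process $\{(t_i, u_i)\}$ on $\R_+ \times [0,1]$ of intensity $\lambda \otimes m$ via $Z(A) = \bigvee_i t_i^{-1} \ind_{u_i \in A}$. Applying the Poisson mapping theorem to $(t,u) \mapsto (t, F(u))$ sends $\lambda \otimes m$ to $\lambda \otimes F_* m = \lambda \otimes \nu$, so the marked process $\{(t_i, F(u_i))\}$ has the same law as the Poisson process driving $X$ in Theorem~\ref{thr:tm-sup-measures}. Consequently
\begin{align*}
  Z(F^-(K)) = \bigvee_i t_i^{-1} \ind_{F(u_i) \cap K \neq \emptyset} \eqd X(K), \qquad K \in \sK,
\end{align*}
and this identification extends to joint finite-dimensional distributions, or equivalently to the distributions of $\int^e f \, dX$ for $f \in \USC$ in view of Lemma~\ref{lemma:distr-sup-measure}.

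The main obstacle is the measurable parameterisation in the second step: realising $\nu$ as the pushforward of a measure on $[0,1]$ under a Borel set-valued map. This hinges on the Polishness of $(\sF, \text{Fell})$ and the $\sigma$-finiteness of $\nu$; the case $\theta(\carrier) < \infty$ is a direct application of the Borel isomorphism theorem, while the general case calls for a countable decomposition together with an appropriate $\sigma$-finite $m$ on $[0,1]$. Once this parameterisation is in place, the remainder is a clean combination of the Poisson mapping theorem with Proposition~\ref{prob:completely-random}.
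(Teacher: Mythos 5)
Your argument is correct, but it follows a genuinely different route from the paper's. The paper first establishes that $X$, viewed as a process indexed by $\sK$, is separable in probability (Condition S, via Corollary~\ref{cor:limits} and the upper semicontinuity of $\theta$), then invokes de~Haan's spectral representation theorem to write $X(K)\eqd\bigvee_i\Gamma_i^{-1}f_K(U_i)$ with $\{(\Gamma_i,U_i)\}$ Poisson on $\R_+\times[0,1]$, and finally uses Theorem~\ref{thr:lepage-tm} to identify the spectral functions as $f_K(u)=\tau(u)\ind_{F(u)\cap K\neq\emptyset}$; the measurable parameterisation over $[0,1]$ is thus outsourced to de~Haan's theorem, at the price of having to verify separability first. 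You instead start from the Poisson representation of Theorem~\ref{thr:tm-sup-measures} on $\R_+\times\sF'$ and build the parameterisation by hand: $\sF$ with the Fell topology is compact metrisable, $\sF'$ is Polish and $\sigma$-compact, $\nu$ is $\sigma$-finite since $\nu(\sF_{K_n})=\theta(K_n)<\infty$ for a compact exhaustion, so a Borel map $F$ with $F_*m=\nu$ exists (Borel isomorphism plus a quantile construction in the finite case, a countable gluing in general), and the Poisson mapping theorem transports $\lambda\otimes m$ to $\lambda\otimes\nu$ (the image intensity is diffuse and $\sigma$-finite, so the mapped process is again Poisson). This is more self-contained, avoids Condition S entirely, and exhibits $F$ and the control measure $m$ explicitly; the cost is the standard-Borel machinery you correctly flag as the crux. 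One shared caveat, not a defect of your argument relative to the paper's: when $\theta(\carrier)=\infty$ the control measure $m$ cannot be Radon on the compact interval $[0,1]$, so $Z$ is only finite on sets of the form $F^-(K)$, $K\in\sK$ --- exactly as in the paper's own construction.
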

\begin{proof}
  Corollary~\ref{cor:limits} and the upper semicontinuity of $\theta$
  yield that $X$ is separable in probability as a process indexed by $\sK$, 
  that is, it satisfies
  Condition S, see \cite{stoev:taq05} and
  \cite{sam:taq94}. Applying \cite[Th.~3]{dehaan84}, we obtain that 
  \begin{displaymath}
    X(K)\eqd \bigvee_{i\geq1} \Gamma_i^{-1} f_K(U_i),
    \qquad K\in\sK\,,
  \end{displaymath}
  for a Poisson process $\{(\Gamma_i,U_i)\}$ on $\R_+\times[0,1]$. By
  Theorem~\ref{thr:lepage-tm}, $X$ is a CRSM if and only if
  $f_K(U_i)=\tau_i\ind_{\Xi_i\cap K\neq\emptyset}=\tau_i\ind_{U_i\in F^-(K)}$, where
  $\Xi_i=F(U_i)$ for some set-valued function $F$. 
  Thus, $X(K)=Z(F^-(K))$, $K \in \sK$ for the  completely random 
  sup-measure $Z(A)=\bigvee_{i\geq1} \Gamma_i^{-1} \tau_i \ind_{U_i \in A}$.
\end{proof}

\vspace{8mm}
\section{Max-stable processes, separability and continuity}
\label{sec:max-stable-processes}

\paragraph{\textbf{\upshape Max-stable processes.}}
The sup-derivative $\xi(x)=X(\{x\})$, $x \in \carrier$, of a max-stable random sup-measure is a max-stable process on $\carrier$ with upper semicontinuous paths. Conversely, sup-integrals of max-stable process with unit Fr{\'e}chet marginals and upper semicontinuous paths are max-stable random sup-measures.
Sup-derivatives of CRSMs are called \emph{TM processes}
in \cite{str:sch15}. If $\carrier$ is finite, then the values of a
CRSM on its points build a TM random vector, see
Example~\ref{ex:tm-vectors}.

It should be noted that information on a random sup-measure $X$ can be lost when passing to  its sup-derivative $\xi$. For instance, $\xi(x)$ may almost surely vanish for all $x\in\carrier$ while $X$ is positive almost surely on all compact balls (with positive radius). This is the case e.g.\ if $X$ is completely random with a non-atomic control measure that is positive on such balls. Because of this, the max-stable random sup-measures provide a more general setting  compared to max-stable processes as studied by their finite-dimensional distributions.\\

\paragraph{\textbf{\upshape Separability.}} 
A random sup-measure $X$ (and the corresponding
functionals $\ell$ and $\theta$) is called \emph{separable} if the
distribution of $X$ is uniquely determined by the finite-dimensional
distributions of its sup-derivative $\xi(x)$ for $x$ from a countable
set $D\subset\carrier$, that is
\begin{align}
  \label{eq:x-separant}
  X(G)=\sup_{x\in D\cap G} X(\{x\})\qquad \text{a.s.}, \quad G \in \sG.
\end{align}
By expressing the both sides of \eqref{eq:x-separant} using the LePage
series \eqref{eq:rep-tm-sup-pp}, it is easily seen that a CRSM $X$ is
separable if and only if
\begin{align}
  \label{eq:separable}
  \nu(\sF_G)=\nu(\sF_{D\cap G}),\qquad G\in \sG,
\end{align}
where $\nu$ is the measure on $\sF'$ associated with the extremal
coefficient functional $\theta$ of $X$ by \eqref{eq:nu-theta-1}.

Let $\sI$ be the family of finite subsets of $\carrier$. A completely
alternating functional $\theta$ on $\sI$ with $\theta(\emptyset)=0$
can be extended to the capacity on $\sK$ by letting
\begin{align}
  \label{eq:t-tilde-1}
  \tilde\theta(G)&=\sup\{\theta(I):\; I\subset G,\; I\in\sI\},\qquad
  G\in\sG,\\
  \label{eq:t-tilde-2}
  \tilde\theta(K)&=\inf\{\tilde\theta(G):\; K\subset G,\;G\in\sG\},\qquad K\in\sK.
\end{align}

\begin{proposition}
  \label{prop:separab-1}
  Let $\theta$ be a completely alternating functional on $\sI$ 
  with $\theta(\emptyset)=0$. Then
  $\tilde\theta$ given by \eqref{eq:t-tilde-1} and
  \eqref{eq:t-tilde-2} is the smallest extremal coefficient functional
  that dominates $\theta$. The CRSM with the extremal coefficient
  functional $\tilde\theta$ is separable. Finally, $\theta$ is the
  restriction on $\sI$ of a separable extremal coefficient functional
  if and only if $\theta$ and $\tilde\theta$ coincide on $\sI$. 
\end{proposition}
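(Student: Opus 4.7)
The plan is to establish the three substantive claims in turn---that $\tilde\theta$ is an extremal coefficient functional dominating $\theta$, that it is minimal among such, and that the associated CRSM is separable---from which the iff assertion follows almost formally. The first and technically heaviest step is to verify that $\tilde\theta$ is an extremal coefficient functional. Introduce the auxiliary set function $\theta^{*}(A) := \sup\{\theta(I) :\, I \subset A,\, I \in \sI\}$ defined on all subsets of $\carrier$, so that $\tilde\theta(G) = \theta^{*}(G)$ on $\sG$ and $\tilde\theta(K) = \inf\{\theta^{*}(G) :\, K \subset G \in \sG\}$ on $\sK$. The core point is that $\theta^{*}$ itself is completely alternating on subsets: given $A, A_1, \ldots, A_n \subset \carrier$ and $\eps > 0$, use the decomposition $\theta^{*}(A \cup \bigcup_{i \in S} A_i) = \sup\{\theta(J_0 \cup \bigcup_{i \in S} J_i) :\, J_0 \subset A,\, J_i \subset A_i \text{ finite}\}$ to pick, for every $S \subset \{1,\ldots,n\}$ of even cardinality, finite sets $J_0^{(S)} \subset A$ and $(J_i^{(S)})_{i \in S}$ realising the supremum within $\eps$; then aggregate $I_0 = \bigcup_S J_0^{(S)}$ and $I_i = \bigcup_{S \ni i} J_i^{(S)}$. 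Monotonicity of $\theta$ gives $\theta^{*}(A^S) - \eps < \theta(I^S) \leq \theta^{*}(A^S)$ for even $S$ and $\theta(I^S) \leq \theta^{*}(A^S)$ for odd $S$, where $I^S = I_0 \cup \bigcup_{i \in S} I_i$, and complete alternation of $\theta$ applied to $I_0,I_1,\ldots,I_n$ delivers $\sum_S (-1)^{|S|} \theta^{*}(A^S) \leq \eps \cdot 2^{n-1}$, which vanishes as $\eps \downarrow 0$. A mirror-image step at the level of open neighbourhoods then passes complete alternation from $\theta^{*}$ to $\tilde\theta$ on $\sK$: for each $S$ select $G^{(S)} \supset K$ and $(G_i^{(S)})_{i \in S} \supset K_i$ with $\theta^{*}(G^{(S)} \cup \bigcup_{i \in S} G_i^{(S)}) < \tilde\theta(K^S) + \eps$, form common neighbourhoods $G = \bigcap_S G^{(S)}$, $G_i = \bigcap_{S \ni i} G_i^{(S)}$, and insert into $\sum_S (-1)^{|S|} \theta^{*}(G^S) \leq 0$ with total error at most $2^n \eps$. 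Upper semicontinuity of $\tilde\theta$ on $\sK$ is built into the outer-regular definition (for $K_n \downarrow K$ and $G \supset K$ open with $\theta^{*}(G) < \tilde\theta(K)+\eps$, eventually $K_n \subset G$), $\tilde\theta(\emptyset)=0$ is immediate, and Lemma~\ref{lemma:theta-ca-usc} identifies $\tilde\theta$ as an extremal coefficient functional; dominance $\tilde\theta(I) \geq \theta(I)$ on $\sI$ is built into $\theta^{*}$.

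For minimality, let $\theta'$ be an arbitrary extremal coefficient functional with $\theta'(I) \geq \theta(I)$ for all $I \in \sI$, and let $\nu'$ be the measure on $\sF'$ associated with $\theta'$ through \eqref{eq:choquet-th}. For open $G$ the natural extension $\theta'(G) := \nu'(\sF_G)$ equals $\sup\{\theta'(I) :\, I \subset G,\, I \in \sI\}$ (directed limit along finite subsets of $G$), hence dominates $\sup\{\theta(I):\, I \subset G,\, I \in \sI\} = \tilde\theta(G)$; outer regularity of $\theta'$ on compacts---valid because $\carrier$ is locally compact Hausdorff second countable, so every $K \in \sK$ admits a sequence of relatively compact open neighbourhoods $G_n \downarrow K$---transfers the inequality to $\theta'(K) \geq \tilde\theta(K)$ on $\sK$.

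For separability, exploit second countability of $\carrier$: fix a countable base $\{U_n\}_{n \geq 1}$ of open sets and, for each $n$, an increasing sequence of finite sets $J_{n,k} \subset U_n$ with $\theta(J_{n,k}) \uparrow \tilde\theta(U_n)$; set $D = \bigcup_{n,k} J_{n,k}$, which is countable. Writing $\nu$ for the measure on $\sF'$ associated with $\tilde\theta$, monotone convergence yields $\nu(\sF_{D \cap U_n}) \geq \sup_k \nu(\sF_{J_{n,k}}) = \sup_k \tilde\theta(J_{n,k}) \geq \sup_k \theta(J_{n,k}) = \tilde\theta(U_n) = \nu(\sF_{U_n})$; the trivial reverse inequality forces equality, so \eqref{eq:separable} holds on the base. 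For arbitrary open $G$, writing $G = \bigcup_{U_n \subset G} U_n$ and noting that any $F \in \sF_G \setminus \sF_{D \cap G}$ meets some base element $U_n \subset G$ without meeting $D \cap U_n$, one has $\sF_G \setminus \sF_{D \cap G} \subset \bigcup_{U_n \subset G}(\sF_{U_n} \setminus \sF_{D \cap U_n})$, and countable subadditivity makes the left-hand side $\nu$-null, verifying \eqref{eq:separable} in full.

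The iff claim is now a formality. If $\theta = \tilde\theta$ on $\sI$, then $\tilde\theta$ itself is a separable extremal coefficient functional extending $\theta$; conversely, if any extremal coefficient functional $\theta'$ restricts to $\theta$ on $\sI$, minimality gives $\theta' \geq \tilde\theta$, hence $\theta(I) = \theta'(I) \geq \tilde\theta(I) \geq \theta(I)$ for $I \in \sI$, forcing $\tilde\theta = \theta$ on $\sI$. I expect the main obstacle to be the two-level $\eps$-approximation underlying complete alternation of $\tilde\theta$: at each of the two extension steps one must arrange a single aggregated choice of approximating finite sets (respectively open neighbourhoods) that simultaneously handles all $2^{n-1}$ even (respectively all $2^n$) subsets $S \subset \{1,\ldots,n\}$ with a controlled $O(\eps)$ total error.
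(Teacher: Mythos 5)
Your proof is correct and, on the two claims the paper actually argues (minimality and separability), it follows essentially the same route: minimality by extending a competing extremal coefficient functional $\theta'$ to open sets through its associated measure $\nu'$ and then using outer regularity on compacts, and separability by assembling a countable separant $D$ from finite sets realising $\tilde\theta(U_n)$ for a countable base $\{U_n\}$ --- your covering argument $\sF_G\setminus\sF_{D\cap G}\subset\bigcup_{U_n\subset G}(\sF_{U_n}\setminus\sF_{D\cap U_n})$ makes explicit what the paper dismisses as ``easily extended''. Where you genuinely go beyond the paper is in verifying that $\tilde\theta$ is an extremal coefficient functional at all: the two-level $\eps$-aggregation transferring complete alternation from $\theta$ on $\sI$ to $\theta^{*}$ on arbitrary sets and then to $\tilde\theta$ on $\sK$ is sound (the aggregated sets $I_0,\dots,I_n$, resp.\ the intersected neighbourhoods, do land in the right sets, and only the even, resp.\ odd, subsets $S$ need the $\eps$-approximation), and the paper's proof omits this step entirely, as it does the final equivalence. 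One side remark in your minimality step is false, though harmlessly so: the asserted identity $\nu'(\sF_G)=\sup\{\theta'(I):I\subset G,\ I\in\sI\}$ fails in general, e.g.\ for $\theta'(K)=\Prob{\xi\in K}$ with $\xi$ non-atomically distributed, where the right-hand side is $0$; the family $\{\sF_I\}$ is directed but not a countable monotone union, so measures need not pass to the limit. Your argument survives because you only use the trivially valid inequality $\nu'(\sF_G)\ge\theta'(I)\ge\theta(I)$ for finite $I\subset G$. Finally, both you and the paper implicitly assume $\tilde\theta$ is finite on compact sets (needed to invoke Lemma~\ref{lemma:theta-ca-usc} and for the continuity-from-above step behind outer regularity); this is not automatic from the hypotheses on $\theta$ and deserves at least a remark.
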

\begin{proof}
  Let $\theta'$ be another extremal coefficient functional that
  dominates $\theta$ on $\sI$. Then $\theta'$ dominates $\tilde\theta$
  on $\sG$ and so on $\sK$. Let $\nu$ be the measure on $\sF'$
  determined by $\tilde\theta$. 
  Let $B$ be any set from a countable base of the topology on $\carrier$,
  and let $\{I_n\}$ be an increasing sequence of finite sets such that
  $\theta(I_n)\uparrow\tilde\theta(B)$. Since $I_n\uparrow D_B=\cup I_n$, 
  we have $\theta(I_n)\uparrow \nu(\sF_{D_B})$.
  Therefore, $\tilde\theta(B)=\nu(\sF_{D_B})$. Finally
  \eqref{eq:separable} holds for $D$ being the union of $D_B$ over all
  $B$ from the countable base of the topology and $G$ also belonging
  to the base of topology. Its validity can be then easily extended
  for all open $G$.
\end{proof}

\paragraph{\textbf{\upshape Continuity.}} 
The series representations of max-stable
random sup-measures yield the corresponding series representations for
max-stable processes.  Since these series for TM processes involve
indicator functions, it is easy to see that TM processes are never
path continuous unless they are a.s.\ constant.

\begin{proposition}
  \label{prop:bound-contin}
  If $X$ is a CRSM with the extremal coefficient functional
  $\theta$, then, for all $K_1,K_2\in\sK$,
  \begin{align*}
    \Prob{X(K_1) - X(K_2)\leq  \eps} 
    \geq \exp\left\{- \frac{1}{\eps} 
      \left(\theta(K_1\cup K_2)-\theta(K_2)\right) \right\}. 
  \end{align*}
  In particular,
  \begin{align}
    \label{eq:bound-difference}
    \Prob{|X(K_1) - X(K_2)|> \eps} 
    \leq \frac{1}{\eps}\left({2 \theta(K_1 \cup K_2) - \theta(K_1) - \theta(K_2)}\right).
  \end{align}
\end{proposition}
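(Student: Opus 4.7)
The plan is to exploit the Poisson-process representation of a CRSM given in Theorem~\ref{thr:tm-sup-measures} and to bound the one-sided difference $X(K_1)-X(K_2)$ by the max-contribution of Poisson atoms whose closed marks hit $K_1$ but miss $K_2$.

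Concretely, I realise $X$ on a single probability space via
\begin{align*}
X(K)=\bigvee_{i\geq 1} t_i^{-1}\ind_{F_i\cap K\neq\emptyset},\qquad K\in\sK,
\end{align*}
where $\{(t_i,F_i)\}$ is Poisson on $\R_+\times\sF'$ with intensity $\lambda\otimes\nu$ and $\nu(\sF_K)=\theta(K)$. Splitting the atoms contributing to $X(K_1)$ according to whether $F_i$ meets $K_2$ or not, I obtain
\begin{align*}
X(K_1)=\max\bigl(A,B\bigr),\qquad
A=\bigvee_{F_i\cap K_1\neq\emptyset,\,F_i\cap K_2=\emptyset} t_i^{-1},\quad
B=\bigvee_{F_i\cap K_1\neq\emptyset,\,F_i\cap K_2\neq\emptyset} t_i^{-1}.
\end{align*}
Since every $F_i$ contributing to $B$ also meets $K_2$, we have $B\leq X(K_2)$. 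Consequently, on the event $\{A\leq\eps\}$,
\begin{align*}
X(K_1)\leq\max(\eps,X(K_2))\leq X(K_2)+\eps,
\end{align*}
so $\Prob{X(K_1)-X(K_2)\leq\eps}\geq\Prob{A\leq\eps}$.

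Now $A$ is the maximum of $t_i^{-1}$ over atoms of a restriction of the Poisson process to $\R_+\times(\sF_{K_1}\setminus\sF_{K_2})$. Using $\sF_{K_1}\cup\sF_{K_2}=\sF_{K_1\cup K_2}$ and $\sF_{K_2}\subset\sF_{K_1\cup K_2}$, this restriction has intensity of total $\nu$-mass
\begin{align*}
\nu(\sF_{K_1}\setminus\sF_{K_2})=\theta(K_1\cup K_2)-\theta(K_2).
\end{align*}
A standard void-probability computation for the Poisson process then yields that $A$ is unit Fr\'echet with scale $\theta(K_1\cup K_2)-\theta(K_2)$, and in particular
\begin{align*}
\Prob{A\leq\eps}=\exp\left\{-\frac{1}{\eps}\bigl(\theta(K_1\cup K_2)-\theta(K_2)\bigr)\right\},
\end{align*}
which is the first claim. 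For the second claim, I apply the just-proved bound symmetrically in $(K_1,K_2)$, combine via the union bound $\Prob{|X(K_1)-X(K_2)|>\eps}\leq\Prob{X(K_1)-X(K_2)>\eps}+\Prob{X(K_2)-X(K_1)>\eps}$, and use $1-e^{-x}\leq x$ on each term. The main (mild) subtlety is the first step: recognising that atoms hitting both $K_1$ and $K_2$ automatically contribute equally to both sides and therefore cancel in the difference, leaving only the one-sided remainder $A$ to control.
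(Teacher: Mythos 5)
Your proof is correct, but it takes a genuinely different route from the paper's. The paper never invokes the Poisson representation here: it writes down the bivariate distribution function $\Prob{X(K_1)\leq p\eps,\,X(K_2)\leq q\eps}$ explicitly from the Choquet-integral form of $\ell$, lower-bounds $\Prob{X(K_1)-X(K_2)\leq\eps}$ by the telescoping sum of the probabilities of the disjoint events $\{X(K_1)\leq k\eps,\ (k-1)\eps< X(K_2)\leq k\eps\}$ for $k=1,\dots,n$, factors out $\exp\{-(\theta_{12}-\theta_2)/(\eps k)\}\geq \exp\{-(\theta_{12}-\theta_2)/\eps\}$, and lets $n\to\infty$ so that the remaining telescoping sum tends to $1$. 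Your argument instead realises $X$ via Theorem~\ref{thr:tm-sup-measures}, splits the atoms hitting $K_1$ into those that also hit $K_2$ (whose contribution $B$ is dominated by $X(K_2)$ and hence cancels in the difference) and those in $\sF_{K_1}\setminus\sF_{K_2}$, and reduces the claim to the void probability of the restricted Poisson process, whose total $\nu$-mass is $\nu(\sF_{K_1\cup K_2})-\nu(\sF_{K_2})=\theta(K_1\cup K_2)-\theta(K_2)$. Your route is shorter and more conceptual --- it makes transparent \emph{why} the exponent is the ``excess mass'' $\theta(K_1\cup K_2)-\theta(K_2)$ --- at the price of importing the series representation; the paper's route is self-contained at the level of finite-dimensional distributions and uses only the formula $\ell(f)=\int f\,d\theta$. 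The deduction of \eqref{eq:bound-difference} by symmetry, the union bound and $1-e^{-x}\leq x$ is the same implicit step in both. One cosmetic point: when you write $\Prob{A\leq\eps}=\exp\{-\eps^{-1}(\theta(K_1\cup K_2)-\theta(K_2))\}$ you are using that the event $\{A\leq\eps\}$ coincides, up to a null set, with the void event $\{$no atom with $t_i<1/\eps$ in $\sF_{K_1}\setminus\sF_{K_2}\}$; this is harmless since atoms with $t_i=1/\eps$ exactly occur with probability zero, but is worth a word.
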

\begin{proof}
  Since $X$ is a CRSM, 
  \begin{align*}
    \Prob{X(K_1)\leq p\eps, \, X(K_2)\leq q \eps}=\exp\left\{-
      \frac{\theta_{12}-\theta_2}{p\eps}
        -\frac{\theta_{12}-\theta_1}{q\eps}
        -\frac{\theta_1+\theta_2-\theta_{12}}{(p \wedge q)\eps}\right\},
  \end{align*}
  where $\theta_i=\theta(K_i)$ and $\theta_{ij}=\theta(K_i\cup
  K_j)$ and $p\wedge q=\min(p,q)$. Hence, for any $n\geq1$,
  \begin{align*}
    &\P\{X(K_1) - X(K_2)\leq \eps\}\\
    &\geq \sum_{k=1}^n \Prob{X(K_1)\leq k\eps, \, X(K_2)\leq k \eps} - \Prob{X(K_1)\leq k\eps, \, X(K_2)\leq (k-1) \eps}\\
    &= \sum_{k=1}^n \exp\left\{-\frac{1}{\eps}
      \left(\frac{\theta_{12}-\theta_2}{k}+\frac{\theta_2}{k}\right)\right\}
    -\exp\left\{-\frac{1}{\eps} \left(\frac{\theta_{12}-\theta_2}{k}
        +\frac{\theta_2}{k-1}\right)\right\} \\
    &= \sum_{k=1}^n \exp\left\{-\frac{\theta_{12}-\theta_2}{\eps
        k}\right\} \left[\exp\left\{-\frac{\theta_2}{\eps k}\right\}
      -\exp\left\{-\frac{\theta_2}{\eps(k-1)}\right\} \right]\\
    &\geq \exp\left\{-\frac{\theta_{12}-\theta_2}{\eps} \right\}
    \sum_{k=1}^n \left[ \exp\left\{-\frac{\theta_2}{\eps k}\right\}
      -\exp\left\{-\frac{\theta_2}{\eps(k-1)}\right\} \right],
  \end{align*}
  where the last telescoping sum equals
  $\exp\left\{-\theta_2/(\eps n)\right\}$ and converges to $1$.
\end{proof}

\begin{corollary}
  \label{cor:limits}
  If $X$ is a CRSM, then $X(K_n)$ converges in probability to
  $X(K)$ for $K\in\sK$ and a sequence $K_n\in\sK$, $n\geq1$, if and
  only if $\theta(K_n)\to\theta(K)$ and $\theta(K_n\cup
  K)\to\theta(K)$.
\end{corollary}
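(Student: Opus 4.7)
The plan is to prove the two directions separately, with the easier direction being the one from the conditions to convergence in probability via the tail bound already established in Proposition~\ref{prop:bound-contin}.

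For the sufficient direction, I would apply the inequality \eqref{eq:bound-difference} with $K_1=K_n$ and $K_2=K$, which yields
\begin{align*}
  \Prob{|X(K_n)-X(K)|>\eps}
  \leq \frac{1}{\eps}\bigl(2\theta(K_n\cup K)-\theta(K_n)-\theta(K)\bigr).
\end{align*}
Under the hypotheses $\theta(K_n)\to\theta(K)$ and $\theta(K_n\cup K)\to\theta(K)$, the right-hand side tends to $0$, giving convergence in probability.

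For the necessary direction, the key observation is that $X(K_n)$ is unit Fr\'echet with scale parameter $\theta(K_n)$ and $X(K)$ is unit Fr\'echet with scale parameter $\theta(K)$. Convergence in probability entails convergence in distribution, and since the scale parameter is a continuous functional of the Fr\'echet law, $\theta(K_n)\to\theta(K)$ follows immediately. To obtain $\theta(K_n\cup K)\to\theta(K)$, I would use the maxitivity property \eqref{eq:maxitive} of the sup-measure $X$, namely $X(K_n\cup K)=X(K_n)\vee X(K)$. The map $(u,v)\mapsto u\vee v$ is continuous, so $X(K_n)\vee X(K)\to X(K)\vee X(K)=X(K)$ in probability. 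Hence $X(K_n\cup K)$, which is unit Fr\'echet with scale $\theta(K_n\cup K)$, converges in distribution to unit Fr\'echet with scale $\theta(K)$, forcing $\theta(K_n\cup K)\to\theta(K)$.

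There is no real obstacle here; both directions are short. The only small care needed is to invoke maxitivity of the random sup-measure to identify $X(K_n\cup K)$ with $X(K_n)\vee X(K)$ in the necessary direction, and to observe that Proposition~\ref{prop:bound-contin} delivers the sufficient direction essentially for free.
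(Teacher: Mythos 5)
Your proof is correct and follows essentially the same route as the paper: sufficiency from the bound \eqref{eq:bound-difference}, and necessity by passing from convergence in probability to convergence in distribution of the unit Fr\'echet laws, using maxitivity to identify $X(K_n\cup K)$ with $X(K_n)\vee X(K)$. You merely spell out the maxitivity step that the paper's proof leaves implicit.
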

\begin{proof}
  Sufficiency follows from \eqref{eq:bound-difference}. For the
  necessity, note that the convergence in probability yields the
  convergence in distribution, and so $\theta(K_n)\to\theta(K)$. Since
  $X(K_n\cup K)\to X(K)$ in probability, $\theta(K_n\cup K)\to
  \theta(K)$.
\end{proof}

\begin{corollary}
  A CRSM is continuous in probability in the Hausdorff metric if and
  only if its extremal coefficient functional is continuous in the
  Hausdorff metric. Then $X$ is almost surely continuous at each
  $K\in\sK$ that coincides with the closure of its interior.
\end{corollary}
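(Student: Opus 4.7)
The plan is to establish the equivalence via Corollary~\ref{cor:limits} and then to handle the a.s.\ continuity assertion by analysing the L\'evy measure $\nu$ associated with $\theta$ through Theorem~\ref{thr:tm-sup-measures}.

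For the iff, I first observe that $K_n\to K$ in the Hausdorff metric implies $K_n\cup K\to K$ in the Hausdorff metric, since
\[
d_H(K_n\cup K,K)=\sup_{x\in K_n}d(x,K)\le d_H(K_n,K).
\]
Sufficiency then follows at once from Corollary~\ref{cor:limits}: Hausdorff continuity of $\theta$ gives both $\theta(K_n)\to\theta(K)$ and $\theta(K_n\cup K)\to\theta(K)$, hence $X(K_n)\to X(K)$ in probability. For necessity, convergence in probability entails convergence in distribution of the unit Fr\'echet variables $X(K_n)$ with scale $\theta(K_n)$ to $X(K)$ with scale $\theta(K)$, forcing $\theta(K_n)\to\theta(K)$.

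For the second assertion I interpret ``$X$ is a.s.\ continuous at $K$'' in the natural sense for a sup-measure, namely $X(K)=X(\mathrm{int}(K))$ almost surely. Using $K=\overline{\mathrm{int}(K)}$, I choose the inner approximation $K^{(m)}=\{x\in K:\;d(x,K^c)\ge 1/m\}$. These are compact with $\bigcup_m K^{(m)}=\mathrm{int}(K)$, and an open-cover argument on the compact set $K$ (using $K\subset(\mathrm{int}(K))^\eps$ for every $\eps>0$ by the density hypothesis) shows $K^{(m)}\to K$ in the Hausdorff metric. Hausdorff continuity of $\theta$ then yields $\theta(K^{(m)})\to\theta(K)$. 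Writing $\theta(L)=\nu(\sF_L)$ and noting $\sF_{K^{(m)}}\uparrow\sF_{\mathrm{int}(K)}$, this forces $\nu(\sF_{\mathrm{int}(K)})=\nu(\sF_K)$, so $\nu$ assigns zero mass to $\sF_K\setminus\sF_{\mathrm{int}(K)}$, i.e.\ to closed sets touching $K$ only along its boundary.

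Plugging this into the LePage-type series~\eqref{eq:rep-tm-sup-pp}, the Poisson process $\{(t_i,F_i)\}$ with intensity $\lambda\otimes\nu$ almost surely has no point with $F_i\in\sF_K\setminus\sF_{\mathrm{int}(K)}$, since this set has intensity zero on every vertical slab $[0,T]$. Hence $\ind_{F_i\cap K\ne\emptyset}=\ind_{F_i\cap\mathrm{int}(K)\ne\emptyset}$ a.s.\ for every $i$, giving $X(K)=X(\mathrm{int}(K))$ a.s. The main difficulty I expect is identifying the right measure-theoretic consequence of Hausdorff continuity of $\theta$ at $K=\overline{\mathrm{int}(K)}$; without this geometric hypothesis the inner approximation does not converge back to $K$ in the Hausdorff metric and the argument collapses.
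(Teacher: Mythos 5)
Your proof of the equivalence is correct and is exactly what the paper intends: Corollary~\ref{cor:limits} plus the observation $d_H(K_n\cup K,K)\le d_H(K_n,K)$ in one direction, and convergence of scale parameters under convergence in distribution in the other. For the second assertion your argument is a genuinely different route from the paper's. The paper sandwiches $K^{-\eps_n}\subset K_n\subset K^{\eps_n}$ for $K_n\to K$ and upgrades the convergence in probability of the two monotone sequences $X(K^{\pm\eps_n})$ to almost sure convergence. You instead work on the level of the measure $\nu$ from Theorem~\ref{thr:tm-sup-measures}: Hausdorff continuity of $\theta$ along the inner approximation $K^{(m)}\uparrow\mathrm{int}(K)$ forces $\nu(\sF_K\setminus\sF_{\mathrm{int}(K)})=0$, so almost surely no Poisson point $F_i$ in \eqref{eq:rep-tm-sup-pp} touches $K$ only along its boundary, whence $X(K)=X(\mathrm{int}(K))$ a.s. This is correct (the Dini-type argument for $K^{(m)}\to K$ in $d_H$ under $K=\overline{\mathrm{int}(K)}$, the identity $\sF_{K^{(m)}}\uparrow\sF_{\mathrm{int}(K)}$, and the finiteness $\nu(\sF_K)=\theta(K)<\infty$ all check out), it uses the continuity hypothesis only along one explicit sequence, and it explains structurally \emph{why} regular closedness matters. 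A shorter route to the same lemma would be: $X(K^{(m)})\uparrow X(\mathrm{int}(K))$ surely, while $X(K^{(m)})\to X(K)$ in probability by the first part.

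The gap is that you have quietly replaced the assertion by a weaker one. In the context of the first sentence, ``$X$ is almost surely continuous at $K$'' means a.s.\ continuity of the map $K'\mapsto X(K')$ at the point $K$ of $(\sK,d_H)$, i.e.\ $X(K_n)\to X(K)$ a.s.\ for $K_n\to K$ in the Hausdorff metric; your conclusion $X(K)=X(\mathrm{int}(K))$ a.s.\ is an inner-regularity statement that does not mention the Hausdorff metric at all. It is the essential ingredient, but to finish one must still combine it with the two halves of a sandwich: the upper bound $X(K_n)\le X(K^{\eps_n})\downarrow X(K)$, which holds surely by upper semicontinuity of sup-measures along $K^{\eps_n}\downarrow K$, and the lower bound $X(K_n)\ge X(K^{-\eps_n})\uparrow X(\mathrm{int}(K))=X(K)$, which requires the inclusion $K^{-\eps_n}\subset K_n$ that the paper extracts (for regular closed $K$) from Hausdorff convergence. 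Without this final step your write-up proves a correct and relevant lemma but not the continuity statement as the corollary is meant to be read; you should either supply the sandwich or state explicitly that you are proving the inner-regularity reformulation.
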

\begin{proof}
  If $K$ is regular closed and $K_n$ converges to $K$ in the Hausdorff
  metric, then
  \begin{math}
    K^{-\eps_n}\subset K_n\subset K^{\eps_n}
  \end{math}  
  for a sequence $\eps_n\downarrow 0$, where $K^r=\{x:\; B_r(x)\cap
  K\neq\emptyset\}$ and $K^{-r}=\{x:\; B_r(x)\subset K\}$ for the
  closed ball $B_r(x)$ of radius $r$ centred at $x$. Note that
  both $X(K^{-\eps_n})$ and $X(K^{\eps_n})$ are monotone sequences
  that converge in probability to $X(K)$ and so almost surely as well.  
\end{proof}

\begin{corollary}
  The TM process $\xi(x)=X(\{x\})$, $x\in \carrier$, of a CRSM $X$ is
  stochastically continuous if and only if $\theta(\{x\})$,
  $x\in\carrier$, is continuous.
\end{corollary}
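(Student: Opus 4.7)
The plan is to apply Corollary~\ref{cor:limits} to the compact sets $K_n=\{x_n\}$ and $K=\{x\}$ for an arbitrary sequence $x_n\to x$ in $\carrier$. That corollary reduces the convergence $\xi(x_n)=X(\{x_n\})\to X(\{x\})=\xi(x)$ in probability to the two conditions $\theta(\{x_n\})\to\theta(\{x\})$ and $\theta(\{x_n,x\})\to\theta(\{x\})$, so the whole argument boils down to relating these two conditions to continuity of $x\mapsto\theta(\{x\})$.

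For the \emph{only if} direction I would argue from the marginal distributions. Since $X$ is max-stable, $\xi(x_n)$ is unit Fr\'echet with scale parameter $\theta(\{x_n\})$ and likewise for $\xi(x)$. Stochastic continuity forces $\xi(x_n)\to\xi(x)$ in distribution, and convergence of the cumulative distribution functions $\exp\{-\theta(\{x_n\})/t\}$ for each $t>0$ immediately yields $\theta(\{x_n\})\to\theta(\{x\})$, which is the required continuity of $x\mapsto\theta(\{x\})$ at $x$.

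For the \emph{if} direction the continuity hypothesis directly supplies $\theta(\{x_n\})\to\theta(\{x\})$, so the only nontrivial step, and the one I expect to be the main obstacle, is to recover $\theta(\{x_n,x\})\to\theta(\{x\})$ from pointwise continuity of the singleton mass alone. The plan here is to exploit the upper semicontinuity of $\theta$ as a capacity on $\sK$: since $\carrier$ is locally compact Hausdorff second countable (hence metrizable), I choose a nested base of compact neighbourhoods $V_k\downarrow\{x\}$, observe that $\{x_n,x\}\subset V_k$ for all sufficiently large $n$, use monotonicity to obtain $\theta(\{x_n,x\})\leq\theta(V_k)$, and then pass to $\limsup_n$ followed by $k\to\infty$, invoking $\theta(V_k)\downarrow\theta(\{x\})$. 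Combined with the trivial lower bound $\theta(\{x_n,x\})\geq\theta(\{x\})$ from monotonicity, this yields the required convergence and completes the argument.
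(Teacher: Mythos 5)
Your argument is correct and is essentially the route the paper intends: the corollary is left unproved as the direct specialization of Corollary~\ref{cor:limits} to singletons, and your sandwich $\theta(\{x\})\leq\theta(\{x_n,x\})\leq\theta(V_k)$ with $V_k\downarrow\{x\}$, using monotonicity and the upper semicontinuity of the capacity $\theta$ on $\sK$, is exactly what is needed to dispose of the second condition $\theta(\{x_n,x\})\to\theta(\{x\})$. Your treatment also makes the nice point explicit that $x\mapsto\theta(\{x\})$ is automatically upper semicontinuous for any CRSM, so the stated continuity hypothesis really only supplies lower semicontinuity.
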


\vspace{10mm}
\section{Coupling and continuous choice}
\label{sec:ordering-coupling}

\paragraph{\textbf{\upshape Ordered coupling.}} Two random sup-measures $X$ and $X'$ are said to admit the \emph{ordered coupling} (notation $X\preceq X'$) if
they can be realised on the same probability space so that with
probability one $X(K)\leq X'(K)$ for all $K\in\sK$. For this, one
needs that
\begin{align*}
  \Prob{X(K_1)\geq t_1,\dots,X(K_m)\geq t_m}
  \geq \Prob{X'(K_1)\geq t_1,\dots,X'(K_m)\geq t_m}
\end{align*}
for all $m\geq1$ and $K_1,\dots,K_m\in\sK$, see e.g.\
\cite{kam:kre:obr77}.

\begin{theorem}
  \label{prop:coupling}
  Let $X$ be a max-stable random sup-measure $X$, such that $X(\carrier)$ is
  a.s.\ finite. Then there exist unique CRSMs $X_*$ and $X^*$, such
  that $X_*\preceq X\preceq X^*$, and for any other CRSMs $X'$ and
  $X''$ such that $X' \preceq X \preceq X''$, we have also $X'\preceq
  X_*$ and $X^*\preceq X''$.
\end{theorem}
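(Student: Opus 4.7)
The plan is to build $X^*$ and $X_*$ separately, with the lower envelope being the more delicate of the two.

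For the upper envelope, take $X^*$ to be the (unique) CRSM whose extremal coefficient functional equals $\theta_X$, the ECF of $X$; this is well defined by Lemma~\ref{lemma:theta-ca-usc} and Theorem~\ref{thr:ell}. Corollary~\ref{cor:domination} yields
\[
  \ell_{X^*}(f) \;=\; \int f\,d\theta_X \;\geq\; \ell_X(f) \qquad \text{for all } f \in \USC.
\]
Translating this inequality to joint lower tails via $\P\{X(K_i) \leq t_i\ \forall i\} = \exp(-\ell_X(\bigvee_i t_i^{-1}\ind_{K_i}))$ gives stochastic domination of the finite-dimensional distributions, which is then promoted to an a.s.\ ordered coupling by a Kamae--Krengel--O'Brien argument applied to a countable separating family of compacts (the required separability being supplied by Section~\ref{sec:max-stable-processes}). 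Minimality of $X^*$ among CRSMs dominating $X$ is automatic: any such $X''$ satisfies $\theta_{X''}(K) = \ell_{X''}(\ind_K) \geq \ell_X(\ind_K) = \theta_X(K)$ for every $K\in\sK$, so monotonicity of the Choquet integral in its capacity gives $\ell_{X^*}\le\ell_{X''}$, and the same coupling argument yields $X^*\preceq X''$.

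For the lower envelope, apply Theorem~\ref{thr:lepage-sup-measures} to write $X \eqd \bigvee_{i\geq 1}\Gamma_i^{-1}Y_i$ with $Y_i$ i.i.d.\ copies of an integrable random sup-measure $Y$. Since $X(\carrier)$ is a.s.\ finite, Remark~\ref{rem:non-unique-Y} lets us normalize so that $Y(\carrier)\equiv c$ for some constant $c>0$, pinning down the spectral measure. For each sample $Y_i$, let $\Xi_i$ be a measurable selection from the closed set on which $Y_i$ attains (or approaches) its maximum $c$, and define the scaled indicator sup-measure
\[
  Y_i^*(K) \;:=\; c\,\ind_{\Xi_i \cap K \neq \emptyset} \;\in\; \smi,
\]
so that $Y_i^*(K)\le Y_i(K)$ pointwise in $K$. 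Setting $X_* := \bigvee_{i\geq 1}\Gamma_i^{-1}Y_i^*$ produces a CRSM by Theorem~\ref{thr:lepage-tm}, and the pointwise inequality realized on the common Poisson probability space delivers $X_*\preceq X$.

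The main obstacle is maximality of $X_*$ among all CRSMs dominated by $X$. The natural route is through the dual representation of Section~\ref{sec:dual-representation}: given any CRSM $X'$ with $X'\preceq X$ and ECF $\theta'$, Proposition~\ref{prop:tm-sets-dual} combined with Theorem~\ref{thr:duality} gives $\bM_{\theta'}\subseteq\bM_X$, and one then wishes to conclude $\theta'\le\theta_*$ pointwise, where $\theta_*(K)=\E[c\,\ind_{\Xi\cap K\ne\emptyset}]$ is the ECF of the $X_*$ constructed above. The delicate step is verifying that the argmax-based choice of $\Xi_i$ produces the pointwise-largest admissible ECF; this should follow by extracting, from each $\mu\in\bM_X$, a scaled-indicator envelope tied to the normalized spectral sup-measure, and then showing that the resulting capacity cannot exceed $\theta_*$. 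Once maximality is secured, uniqueness of both $X_*$ and $X^*$ follows automatically from their extremal characterizations.
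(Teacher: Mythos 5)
Your construction of the upper envelope $X^*$ is wrong. You take $X^*$ to be the CRSM with the \emph{same} extremal coefficient functional $\theta_X$ as $X$. Corollary~\ref{cor:domination} only gives $\ell_{X^*}\geq\ell_X$, i.e.\ domination of the joint \emph{lower-orthant} probabilities; this is not the usual stochastic order needed to invoke Kamae--Krengel--O'Brien, and in fact no ordered coupling $X\preceq X^*$ can exist for your candidate unless $X$ is already a CRSM. Indeed, for every $K\in\sK$ the variables $X(K)$ and $X^*(K)$ are both unit Fr\'echet with the \emph{same} scale $\theta_X(K)$; if additionally $X(K)\leq X^*(K)$ a.s., then $X(K)=X^*(K)$ a.s., and since this holds for all $K$ the two sup-measures would coincide in law. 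A concrete instance is the bivariate logistic vector with $\ell(u_1,u_2)=(u_1^2+u_2^2)^{1/2}$, which shares its extremal coefficients with a TM vector without being one. The correct minimal dominating CRSM necessarily has a \emph{strictly larger} extremal coefficient functional: the paper builds it from the LePage representation by replacing each $Y_i$ by the smallest element of $\smi$ dominating it, namely $Y_i^*(K)=Y_i(\carrier)\ind_{Y_i(K)>0}$, whose extremal coefficient functional $\E[Y(\carrier)\ind_{Y(K)>0}]$ exceeds $\E Y(K)=\theta_X(K)$ in general; the coupling is then realized pointwise on the common Poisson probability space, with no appeal to a Strassen-type theorem.

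Your lower envelope is close to the paper's construction (replace $Y_i$ by the largest scaled indicator sup-measure it dominates, supported by the argmax set $\{x:\,Y_i(\{x\})=Y_i(\carrier)\}$), but two issues remain. First, if your ``measurable selection'' $\Xi_i$ is a proper subset of the full argmax set, the resulting $X_*$ is strictly smaller than the paper's and maximality fails; you must take the whole argmax set. Second, you explicitly leave the maximality of $X_*$ open (``this should follow by \dots''), yet that is the substance of the assertion; the paper obtains it from the equivalence of $X'\preceq X$ with an ordered coupling $Y'\preceq Y$ of the spectral sup-measures, combined with the extremality of $Y_*$ among elements of $\smi$ dominated by $Y$. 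As it stands, the proposal neither identifies the correct $X^*$ nor completes the argument for $X_*$.
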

\begin{proof}
  The max-stable random sup-measure $X$ admits the LePage representation
  \eqref{eq:lepage-sm}, where $\{Y_i\}$ are i.i.d.\ copies of a
  sup-measure $Y$. Then $X\preceq X'$ for another max-stable random sup-measure $X'$
  if and only if $X'$ has the
  LePage representation with i.i.d.\ random sup-measures $\{Y'_i\}$
  distributed as $Y'$ such that $Y\preceq Y'$.  Since $X(\carrier)$ is
  finite,
  \begin{align*}
    \ell(\ind_\carrier)=\E \int^e \ind_\carrier dY=\E Y(\carrier) <\infty.
  \end{align*}
  Thus, $Y(\carrier)$ is a.s.\ finite. The minimal CRSM $X^*$
  dominating $X$ arises if the corresponding $Y^*$ is chosen to be the
  smallest random sup-measure $Y^*$ with realisations from $\smi$ that
  dominates $Y$, that is $Y^*(K)=Y(\carrier)\ind_{Y(K)>0}$ for
  $K\in\sK$.  Furthermore, $Y_*(K)=Y(\carrier)$ for $K\subset \{x:\;
  Y(\{x\})=Y(\carrier)\}$ and $Y_*(K)=0$ otherwise is the largest
  indicator random sup-measure that is dominated by $Y$. Finally, construct
  the CRSMs $X^*$ and $X_*$ by \eqref{eq:lepage-sm} using i.i.d.\
  copies of $Y^*$ and $Y_*$, respectively.
\end{proof}

\paragraph{\textbf{\upshape Continuous choice.}} 
Upper semicontinuous max-stable processes $\xi(x)$ defined for $x$
from a compact space $\carrier$, have been used to model
\emph{continuous choice} in \cite{res:roy91}. In particular, the
random set
\begin{align*}
  M=\{x\in\carrier:\; \xi(x)=\xi^\vee(\carrier)\}
  =\{x\in\carrier:\; X(\{x\})=X(\carrier)\}
\end{align*}
is the set of optimal choices, where $X=\xi^\vee$ is the sup-integral
of $\xi$. The upper semicontinuity assumption on $X$ and $\xi$ yields
that $M$ is indeed a random closed set \cite[Th.~1.2.27(ii)]{mo1}. 
In the following we relax the compactness assumption on $\carrier$ by
only imposing that the max-stable random sup-measure is finite, so that
$\xi^\vee(\carrier)<\infty$ and $M \neq \emptyset$ almost surely.  The
following theorem immediately recovers and extends a number of results
from \cite{res:roy91}, namely Theorem~4.1 and~4.2 and Corollary~4.1
therein, for not necessarily separable max-stable random sup-measures
on not necessarily compact spaces.  We do not need to assume that $X$
is separable, since $M$ is defined for each $\omega$ from the
probability space.

\begin{theorem}
  \label{thr:r-roy}
  If $X$ is a finite max-stable random sup-measure 
  with the LePage representation
  \eqref{eq:lepage-sm}, then the set of optimal choices $M$ is
  distributed as 
  \begin{align*}M_Y=\{x:\; Y(\{x\})=Y(\carrier)\}\end{align*} 
  and is independent of $X(\carrier)$. The set $M$ is a singleton if and only
  if $M_Y$ is a.s.\ a singleton; in particular, if $X$ is a CRSM, this
  is possible if and only if $X$ is completely random.
\end{theorem}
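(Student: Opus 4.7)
The plan is to exploit the LePage representation \eqref{eq:lepage-sm} by identifying the single Poisson atom at which the finite total mass $X(\carrier)$ is attained. A preparatory normalisation is needed: by \eqref{eq:tail-x-lepage} we have $\E Y(\carrier) = \ell(\ind_\carrier) < \infty$, so $Y(\carrier)$ is a.s.\ finite, and Remark~\ref{rem:non-unique-Y} allows us to replace $Y$ by a version with $Y(\carrier) \equiv c$ for some positive constant $c$. Under this choice the candidate contributions to the total mass reduce to $\Gamma_i^{-1} Y_i(\carrier) = c\Gamma_i^{-1}$, whence $X(\carrier) = c/\Gamma_1$ is attained at the unique index $i^* = 1$, which is a measurable function of the ordered Poisson sequence $\{\Gamma_i\}$ alone.

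The identification $M = M_{Y_1}$ then follows by a squeeze argument. For every $x \in \carrier$ and every $i \geq 2$, monotonicity of the sup-measure and the ordering of $\{\Gamma_i\}$ give $\Gamma_i^{-1} Y_i(\{x\}) \leq c/\Gamma_i \leq c/\Gamma_2 < X(\carrier)$, while $\Gamma_1^{-1} Y_1(\{x\}) \leq c/\Gamma_1 = X(\carrier)$ with equality precisely when $Y_1(\{x\}) = c = Y_1(\carrier)$. Hence $X(\{x\}) = X(\carrier)$ holds if and only if $x \in M_{Y_1}$. Because the marks $\{Y_i\}$ are i.i.d.\ copies of $Y$ independent of $\{\Gamma_i\}$, the mark $Y_1$ is distributed as $Y$ and independent of $\{\Gamma_i\}$; consequently $M = M_{Y_1} \eqd M_Y$ is independent of $X(\carrier) = c/\Gamma_1$.

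For the singleton part, the equivalence ``$M$ is a.s.\ a singleton iff $M_Y$ is'' is immediate from the distributional identity. In the CRSM case, Theorem~\ref{thr:lepage-tm} together with Remark~\ref{rem:lepage-tm-non-integrable} lets us choose $Y$ of the form $Y(K) = \tau \ind_{\Xi \cap K \neq \emptyset}$ with $\Xi$ an a.s.\ non-empty random closed set, so that $M_Y = \Xi$. Thus $M$ is a.s.\ a singleton iff $\Xi$ is; by Theorem~\ref{thr:tm-sup-measures} and the Choquet-theorem uniqueness of $\nu$ in \eqref{eq:nu-theta-1}, this occurs precisely when the L\'evy measure $\nu$ is supported on singletons, equivalently when $\theta$ is additive, which by Proposition~\ref{prop:cr-iff-theta-add} means exactly that $X$ is completely random.

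The main obstacle is to justify the independence cleanly: without normalising $Y$, the winning index $i^*$ would depend on both $\{\Gamma_i\}$ and $\{Y_i(\carrier)\}$, and the size-biased mark at that index would not have the plain distribution of $Y$. The non-uniqueness afforded by Remark~\ref{rem:non-unique-Y} is therefore an essential ingredient: combined with the scale-invariance of the map $y \mapsto M_y$, it turns the winning-atom mark into a genuine independent copy of $Y$ and reconciles the statement with the general ambiguity of the LePage representation.
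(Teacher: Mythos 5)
Your proof is correct and takes a genuinely different route from the paper's. The paper first settles the CRSM case directly from the normalised indicator representation \eqref{eq:lepage-tm-finite}, where $M=\Xi_1$ and $X(\carrier)=\theta(\carrier)\Gamma_1^{-1}$ are manifestly independent, and then reduces the general case to it via the ordered coupling of Theorem~\ref{prop:coupling}: the largest CRSM $X_*$ dominated by $X$, built from $Y_*=Y(\carrier)\ind_{M_Y\cap\,\cdot\,\neq\emptyset}$, shares with $X$ the maximal value and the arg-max set, so the CRSM case applies to it. You instead normalise the spectral measure so that $Y(\carrier)\equiv c$ and run the winning-atom/squeeze argument directly on the general LePage series. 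This avoids the coupling machinery altogether and makes the independence of $M=M_{Y_1}$ from $X(\carrier)=c/\Gamma_1$ completely transparent; what it gives up is the structural insight that the arg-max set of $X$ only sees the ``CRSM shadow'' $X_*$ of $X$. Your treatment of the singleton/CRSM dichotomy via the support of $\nu$ is also fine, and more explicit than the paper's one-line appeal to Proposition~\ref{prop:cr-iff-theta-add}.

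One caveat, which you only half-acknowledge in your last paragraph: the normalisation $Y\mapsto cY/Y(\carrier)$ is a \emph{random} rescaling, so the normalised spectral measure is the $Y(\carrier)$-size-biased version of the law of $Y/Y(\carrier)$, not the original law. Consequently your argument shows $M\eqd M_{Y'}$ for the normalised $Y'$, and this in general differs from the law of $M_Y$ for an arbitrary unnormalised $Y$ in \eqref{eq:lepage-sm}: on a two-point carrier with $Y$ putting mass $2$ on $x_1$ or mass $1$ on $x_2$ with equal probability, one gets $\P(M=\{x_1\})=2/3$ while $\P(M_Y=\{x_1\})=1/2$. The scale-invariance of $y\mapsto M_y$ alone does not absorb this size-biasing, so your closing claim that it ``reconciles the statement with the general ambiguity'' overreaches. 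This is an imprecision inherited from the theorem statement itself rather than a defect of your argument --- the paper's own proof produces exactly the same size-biased distribution, namely $\Prob{M\cap K\neq\emptyset}=\E[Y(\carrier)\ind_{M_Y\cap K\neq\emptyset}]/\E Y(\carrier)$ --- but you should say explicitly that the conclusion $M\eqd M_Y$ is proved for the normalised choice of $Y$.
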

\begin{proof}
  If $X$ is a finite CRSM, then \eqref{eq:lepage-tm-finite}
  yields that
  \begin{align*}
    \xi(x)\eqd \theta(\carrier)\bigvee_{i\geq1} \Gamma_i^{-1} \ind_{x\in\Xi_i}
  \end{align*}
  for an i.i.d.\ sequence $\{\Xi_i\}$ of a.s.\ non-empty random closed
  sets. Then $M=\Xi_1$, and so its distribution can be identified as
  $\Prob{M\cap K\neq\emptyset}=\theta(K)/\theta(\carrier)$,
  $K\in\sK$. Furthermore, the random sets $M=\Xi_1$ and
  $\xi^\vee(\carrier)=\theta(\carrier)\Gamma_1^{-1}$ are independent.
  
  The largest CRSM $X_*$ dominated by $X$ and constructed in
  Theorem~\ref{prop:coupling} shares with $X$ the same values for
  the maximum and the corresponding arg-max set $M$. It suffices to
  note that $X_*$ admits the LePage representation
  \eqref{eq:lepage-tm-finite} and so its value on $\carrier$ and the
  arg-max set are independent, while $M$ has the same distribution as
  $\Xi_1$.

  Finally, $M$ is a singleton if and only if $M_Y$ is a singleton. In
  the CRSM case, this is equivalent to $\theta$ being a Radon measure,
  see Proposition~\ref{prop:cr-iff-theta-add}.
\end{proof}

\vspace{10mm}
\section{Invariance and transformations}
\label{sec:invar-transf}

\paragraph{\textbf{\upshape Bernstein functions.}}

A non-negative function $g$ on $[0,\infty)$ is called a
\emph{Bernstein function} if and only if it is continuous on
$[0,\infty)$ and its derivatives $g^{(n)}$ on $(0,\infty)$
exist and satisfy $(-1)^{(n+1)}g^{(n)} \geq 0$ for all $n\geq 1$. Each
Bernstein function such that $g(0)=0$ can be represented as
\begin{align}
  \label{eq:bernstein}
  g(t) = bt + \int_0^\infty (1-e^{-s t})~\varrho(ds)
\end{align}
for $b \geq 0$ and a Radon measure $\varrho$ on $(0,\infty)$ with
$\int_0^\infty (s \wedge 1)\varrho(ds)<\infty$, where $s\wedge
1=\min(s,1)$, see \cite{sch:son:von10}. Such functions can be also
viewed as the continuous non-negative negative definite functions on
the semigroup being $[0,\infty)$ with the arithmetic addition, see
\cite{ber:c:r}. A wealth of material on Bernstein functions including
many examples can be found in \cite{sch:son:von10}.

\begin{proposition}
  \label{prop:bernstein}
  Let $\theta$ be the extremal coefficient functional of a max-stable
  random sup-measure. If $g$ be a Bernstein function such that $g(0)=0$, then
  the composition $(g\circ\theta)(K)=g(\theta(K))$, $K\in\sK$, is an
  extremal coefficient functional.
\end{proposition}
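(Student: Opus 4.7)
The plan is to verify the three characterising conditions of Lemma~\ref{lemma:theta-ca-usc} for $\cpc:=g\circ\theta$, namely that $\cpc(\emptyset)=0$, that $\cpc$ is upper semicontinuous on $\sK$, and that $\cpc$ is completely alternating in the union semigroup.

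Two of these conditions follow immediately from the defining properties of Bernstein functions. Since $g(0)=0$ and $\theta(\emptyset)=0$, one gets $\cpc(\emptyset)=0$. A Bernstein function is continuous and non-decreasing on $[0,\infty)$ (the latter because $g'\geq 0$), so if $K_n\downarrow K$ in $\sK$, upper semicontinuity of $\theta$ combined with continuity and monotonicity of $g$ yield $\cpc(K_n)\downarrow \cpc(K)$.

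For the complete alternation I would use the integral representation \eqref{eq:bernstein}, which gives
\begin{align*}
\cpc(K)=b\theta(K)+\int_0^\infty \bigl(1-e^{-s\theta(K)}\bigr)\,\varrho(ds),\qquad K\in\sK.
\end{align*}
The linear term $b\theta$ is trivially completely alternating. The essential reduction is to show that, for each fixed $s>0$, the functional $\theta_s(K):=1-e^{-s\theta(K)}$ is completely alternating on $\sK$. Granted this, complete alternation of $\int_0^\infty \theta_s(\cdot)\,\varrho(ds)$ follows by interchanging the successive differences $\Delta_{K_n}\cdots\Delta_{K_1}$ with the integral; since the differences are finite linear combinations and $1-e^{-s\theta(K)}\leq \min(s\theta(K),1)$ with $\int_0^\infty (s\wedge 1)\,\varrho(ds)<\infty$, all resulting terms are absolutely integrable against $\varrho$, so the interchange is legitimate by Fubini.

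To identify $\theta_s$ as a completely alternating upper semicontinuous capacity, I would realise it as the capacity functional of a random closed set. By the Choquet theorem \eqref{eq:choquet-th} applied to $\theta$, there exists a locally finite measure $\nu_\theta$ on $\sF'$ with $\nu_\theta(\sF_K)=\theta(K)$. Let $\Pi_s$ be a Poisson process on $\sF'$ with intensity $s\nu_\theta$ and let $\Xi_s$ denote the closure of the union of the atoms of $\Pi_s$; local finiteness of $\nu_\theta$ ensures that only finitely many atoms meet any given $K\in\sK$, so $\Xi_s$ is a well-defined random closed set, and its void probabilities give
\begin{align*}
\Prob{\Xi_s\cap K\neq\emptyset}=1-\exp\bigl(-s\nu_\theta(\sF_K)\bigr)=1-e^{-s\theta(K)}=\theta_s(K).
\end{align*}
The main obstacle is the interchange step above; once the integrability bound is in hand this is routine, and the rest of the argument assembles the three required properties of $\cpc$ from the building blocks $b\theta$ and $\theta_s$.
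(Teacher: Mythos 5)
Your proof is correct, but it takes a genuinely different route from the paper. The paper disposes of complete alternation in one stroke by citing Berg--Christensen--Ressel: on the idempotent semigroup $(\sK,\cup,\emptyset)$ complete alternation is equivalent to negative definiteness, and Bernstein functions operate on negative definite functions. You instead unpack the representation \eqref{eq:bernstein} and reduce everything to the single family $\theta_s=1-e^{-s\theta}$, which you identify as the hitting functional of the union set of a Poisson process on $\sF'$ with intensity $s\nu_\theta$; complete alternation then comes for free from the Choquet theorem, and the passage back through the integral is just linearity over a finite signed sum of terms that are integrable thanks to $1-e^{-s\theta(K)}\leq\min(s\theta(K),1)$ (invoking Fubini here is overkill but harmless). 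Your argument is essentially the concrete, probabilistic version of the abstract semigroup fact the paper cites -- the proof in Berg--Christensen--Ressel that Bernstein functions preserve negative definiteness runs through the same integral representation. What the paper's route buys is brevity; what yours buys is self-containment (only the Choquet theorem and \eqref{eq:bernstein} are needed, both already central to the paper) and an explicit construction: it exhibits the measure on $\sF'$, namely $b\nu_\theta$ plus the distribution of Poisson union sets, whose hitting functional is $g\circ\theta$, and hence shows how the CRSM with extremal coefficient functional $g\circ\theta$ arises as a Boolean-model-type transformation of the one for $\theta$. The only point worth stating explicitly is that local finiteness of $\nu_\theta$ makes the union of the Poisson atoms already closed (not merely its closure), so that $\Xi_s\cap K\neq\emptyset$ is indeed equivalent to some atom hitting $K$ and the void-probability computation applies; you note the relevant finiteness, so this is a matter of wording rather than a gap.
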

\begin{proof}
  It is easily seen that $g \circ \theta (\emptyset)=0$ and $g \circ
  \theta$ is upper semicontinuous by the continuity of $g$. The
  functional $g \circ \theta$ is also completely alternating, since
  the complete alternation and negative definiteness are equivalent on
  the idempotent (and in particular 2-divisible) semigroup $(\sK,
  \cup, \emptyset)$ \cite[Cor.~4.6.8 and p.~120]{ber:c:r} and
  Bernstein functions preserve this property \cite[Prop.~3.2.9 and
  p.~114]{ber:c:r}. By Lemma~\ref{lemma:theta-ca-usc} the functional
  $g\circ \theta$ is an extremal coefficient functional.
\end{proof}

\begin{example}
  \label{ex:power} 
  The Bernstein functions $g_\alpha(t)=t^\alpha$, $\alpha \in (0,1)$
  can be represented in the form (\ref{eq:bernstein}) with
  $\varrho(ds)=\frac{\alpha}{\Gamma(1-\alpha)} s^{-(\alpha +
    1)} ds$, see \cite[p.~78]{ber:c:r}. Hence, if $\theta$ is an
  extremal coefficient functional, so is $\theta^\alpha$ for $\alpha
  \in (0,1)$.
\end{example}

\paragraph{\textbf{\upshape Rearrangement invariance.}}

Assume that $X$ is a max-stable random sup-measure and let $\mu$ be a Radon
measure on the Borel $\sigma$-algebra in $\carrier$.  The tail
dependence functional $\ell(f)$ (and the max-stable random sup-measure $X$) is
said to be \emph{rearrangement invariant} (or symmetric) with respect
to $\mu$ if $\ell(f_1)=\ell(f_2)$ if $\mu(\{f_1\geq t
\})=\mu(\{f_2\geq t \})$ for all $t>0$. 

If $X$ is a CRSM, then $\ell(f)$ is rearrangement invariant if and
only if $\theta(K)$ is \emph{symmetric}, meaning that
$\theta(K_1)=\theta(K_2)$ whenever $\mu(K_1)=\mu(K_2)$. For general stable sup-measures the symmetry of
$\theta$ does not imply the rearrangement invariance of $\ell$, see
\cite[Ex.~4]{was:kad92}.

\begin{example}
  Let $\mu$ be the counting measure on $\carrier=\{1,\dots,d\}$, so
  that a CRSM $X$ is determined by $X(\{i\})=\xi_i$, $i=1,\dots,d$. The
  rearrangement invariance of $X$ means that $\theta(K)$ depends only
  on $\mu(K)$, i.e. the cardinality of $K$. The normalised functional
  $\theta(K)$ is the capacity functional of a random set
  $\Xi\subset\{1,\dots,d\}$. By the rearrangement invariance,
  conditionally on $\mu(\Xi)=k$, the random set $\Xi$ can equally
  likely be any $k$-tuple of points from $\{1,\dots,d\}$. Thus,
  \begin{align*}
    \theta(K)
    &=c \left(\sum_{k=1}^{d-m} 
      \frac{\binom{d}{k}-\binom{d-m}{k}}{\binom{d}{k}}p_k
      + \sum_{k=d-m+1}^d p_k\right)
      =c \left(1-p_0-\sum_{k=1}^{d-m} 
      \frac{\binom{d-m}{k}}{\binom{d}{k}}p_k\right), 
  \end{align*}
  where $m$ is the cardinality of $K$, $p_0,\dots,p_d$ is a
  probability distribution on $\{0,\dots,d\}$, and
  $c=\theta(\carrier)$. 
\end{example}

\begin{example}
  Let $\carrier$ be a countable set with the discrete topology and the
  counting measure $\mu$. Assume that $\theta(\carrier)$ is finite, so
  that the normalised $\theta$ defines a random closed set $\Xi$. The
  capacity functional of $\Xi$ is rearrangement invariant if and only
  if the sequence $\{\ind_{i\in\Xi},i\geq1\}$ is exchangeable. By the
  de Finetti theorem such a sequence is conditionally i.i.d.\ Thus,
  given a random variable $\zeta\in[0,1]$, $\Xi$ consists of all
  points in $\carrier$, independently chosen with probability
  $\zeta$ and
  \begin{align*}
    \theta(K)=c(1-\E[(1-\zeta)^{\mu(K)}])
  \end{align*}
  for $c>0$ yields all rearrangement invariant finite extremal
  coefficient functionals on a countable space. 
The random set $\Xi$
  with the capacity functional given by the normalised $\theta$ is the
  support of the Cox (doubly stochastic Poisson) process whose
  intensity measure is given by $(-\log(1-\zeta))\mu$. If $\zeta=1$,
  then $\Xi=\carrier$.
\end{example}

Any rearrangement invariant extremal coefficient functional can be
represented as $\theta(K)=g(\mu(K))$ for a monotone function
$g:\R_+\mapsto\R_+$ such that $g(0)=0$ and $\theta$ is completely
alternating. This is the case, for instance, if $g$ is a Bernstein
function.

\begin{example}
  \label{example:avar}
  The rearrangement invariant tail dependence functional $\ell$ of a
  CRSM can be extended to $L^\infty$ and then, applied to $-f$,
  becomes a coherent risk measure of $f$, see \cite{delb12}.  
One of
  the most important coherent risk measures is the average value at
  risk that appears if $\theta(K)=g_\alpha(\mu(K))$ for
  $g_\alpha(t)=\frac{1}{\alpha}(t\wedge \alpha)$ with a fixed
  $\alpha\in(0,1]$ and a probability measure $\mu$, see
  \cite[Ex.~4.65]{foel:sch04}. However, $g_\alpha(\mu(K))$ is not
  alternating of order $3$ and so is not completely alternating and
  consequently is not an extremal coefficient functional. 
  Indeed,
  assume that $\mu$ is non-atomic and fix disjoint sets
  $K_1,K_2,K_3,K_4$ with equal measures $p/4$ for some $p \in (0,1]$
  such that $\alpha \in [(3/4)p,p)$.  Then
  \begin{align*}
    &\Delta_{K_1}\Delta_{K_2}\Delta_{K_3}
    (g_\alpha \circ \mu)(K_{4})
    = \frac{1}{\alpha} \left(\sum_{k=0}^{3} (-1)^k \binom{3}{k}
      \left(\frac{(k+1)p}{4} \wedge \alpha \right)\right)\\
    &= \frac{1}{\alpha} \left(\frac{p}{4} \wedge \alpha
      - 3 \frac{2p}{4} \wedge \alpha
      + 3 \frac{3p}{4} \wedge \alpha
      - \frac{4p}{4} \wedge \alpha \right) =\frac{p}{\alpha}-1 >0.
  \end{align*}
  In particular, this example shows that the convex set $\bM$ of
  probability measures having density with respect to $\mu$ bounded by
  a constant $c>1$ does not yield a tail dependence functional
  by \eqref{eq:ell-dual}. 
\end{example}

Under the assumption that the reference measure $\mu$ is a probability
measure and $\theta(\carrier)=1$, each rearrangement invariant
extremal coefficient functional can be expressed as
\begin{align*}
  \theta(K)=\int_{(0,1]}s^{-1}(\mu(K)\wedge s)\kappa(ds)+
  \ind_{K\neq \emptyset}\kappa(\{0\})
\end{align*}
for a probability measure $\kappa$ on $[0,1]$, see
\cite[Th.~4.87]{foel:sch04} and \cite{kus01}. 
Example~\ref{example:avar} shows that $\kappa$ concentrated at a
single point does not yield a valid extremal coefficient functional.

\begin{example}
  If $g(t)=t^\alpha$ for $\alpha\in(0,1)$, then
  $\kappa(dt)=\alpha(1-\alpha)t^{\alpha-1} dt$. The corresponding
  functional $\theta(K)=\mu(K)^\alpha$ is an extremal coefficient
  functional and $\ell(-f)$ is the proportional hazard risk measure.
\end{example}

\paragraph{\textbf{\upshape Stationarity and self-similarity.}}
\label{sec:stat-self-simil}

A random sup-measure on $\carrier=\R^d$ 
is called \emph{stationary} if $X(\cdot +x) \eqd X(\cdot)$ for all $x
\in \carrier$. It is called \emph{self-similar} with exponent $H$ if
$X(c \,\cdot\,) \eqd c^{H} X$ for all $c>0$. 
Stationary and self-similar random sup-measures are the only possible 
scaling limits of extremal processes on $[0,\infty)$, cf.~\cite{obr:tor:ver90}.

It is immediate that a CRSM is stationary (resp.\
self-similar) if and only if its extremal coefficient functional
satisfies $\theta(K+x)=\theta(K)$ (resp.\ $\theta(cK)=c^H \theta(K)$)
for $K\in \sK$.  However, the non-uniqueness of the spectral measure
$\pi$ in the LePage representation of max-stable random sup-measures
\eqref{eq:lepage-sm} 
implies that non-stationary (or non-selfsimilar) $Y$ may result in
stationary (or self-similar) random sup-measures. In particularly, the CRSM 
given by \eqref{eq:lepage-sm} is stationary if and only if $\E Y(K)=\E Y(K+x)$ 
for all $x\in\R^d$. In other words, the first order
stationarity of $Y$ implies the stationarity of $X$.  

\begin{example}
  Let $\carrier = \R$, and let $\zeta$ be a positive random variable
  with density $(rg(r))^{-1}$, $r>0$, for an appropriately chosen
  function $g$. Set $\Xi=\{\log \zeta\}$ and $\tau=g(\zeta)$ in
  Remark~\ref{rem:lepage-tm-non-integrable}, so that $Y(K)=g(\zeta)
  \ind_{\log \zeta \in K}$, $K\in\sK$. While $\Xi$ is not stationary,
  $\E Y(K)= \int_{0}^\infty \ind_{\log r \in K} \, r^{-1} dr$ equals
  the Lebesgue measure of $K$ and so is translation invariant.
\end{example}

\begin{remark}
  \label{remark:making-stationary}
  A generic construction of stationary CRSM works as follows. 
  Considering the Poisson process
  $\{(t_i,v_i,F_i)\}$ on $\R_+\times\R^d\times\sF'$ with the intensity
  measure $\lambda\otimes\lambda_d\otimes\nu$ (where $\lambda_d$ is
  the Lebesgue measure in $\R^d$) and let
  \begin{equation}
    \label{eq:stat-crsm}
    X(K)=\bigvee_{i\geq1} t_i^{-1}\ind_{(F_i+v_i)\cap
      K\neq\emptyset},\qquad K\in\sK. 
  \end{equation}
  The extremal coefficient functional of $X$ is 
  \begin{align*}
    \theta(K)=\int_{\sF'}\lambda_d(K+\check{F})\nu(dF),
  \end{align*}
  where $\check F=\{-x :\; x \in F\}$.
  If $\theta$ is normalised and corresponds to the random closed set
  $\Xi$, then the latter simplifies to
  $\theta(K)=\E\lambda_d(K+\check{\Xi})$.

  A similar construction with $F_i+v_i$ replaced by $s_iF_i$ on
  $\carrier=\R^d\setminus\{0\}$ for the Poisson process $\{s_i,
  i\geq1\}$ of intensity $\alpha s^{\alpha-1}ds$, $s>0$, yields
  self-similar CRSMs. 
  These constructions can be also applied to obtain stationary and
  self-similar versions of the tail dependence functional of general
  max-stable random sup-measures.
\end{remark}

\vspace{10mm}
\section{Examples of CRSM sup-measures}
\label{sec:examples-tm-sup}

\begin{example}[TM random vectors]
  \label{ex:tm-vectors}
  If $\carrier=\{1,\dots,d\}$ is a finite set, then the CRSM
  corresponds to a semi-simple max-stable random vector,
  whose distribution is uniquely determined by its extremal
  coefficients $\theta(K)=\ell(\ind_K)$, $K \subset \{1,\dots,d\}$.
  The comonotonicity property of $\ell$ is equivalent to
  \begin{align*}
    \ell(u)=(u_d-u_{d-1})\theta(\{d\})+(u_{d-1}-u_{d-2})\theta(\{d-1,d\})
    + \cdots +u_1\theta(\{1,\dots,d\})
  \end{align*}
  for $u_1\leq\cdots\leq u_d$.  Thus, the CRSMs on a finite
  carrier space become TM random vectors studied in
  \cite{str:sch15}. In this case, each CRSM is necessarily
  finite and the series representation \eqref{eq:lepage-tm-finite}
  yields 
  the series representation of TM random vectors from
  \cite{str:sch15}. Proposition~\ref{prob:completely-random} 
  becomes \cite[Eq.~(10)]{str:sch15}. It is easy to see that a stable
  sup-measure $X$ is a CRSM if and only if its finite dimensional
  distributions are TM random vectors.
\end{example}

\begin{example}
  If $\Xi=F$ is a deterministic closed set, then
  $\theta(K)=\ind_{K\cap F\neq\emptyset}$, and the corresponding CRSM
  constructed by \eqref{eq:lepage-tm-finite} is the indicator
  sup-measure $X(K)=\zeta \ind_{K\cap F\neq\emptyset}$, where $\zeta$
  is the unit Fr\'echet random variable.
\end{example}

\begin{example}
  Let $\Xi=[\zeta,\infty)$ on $\carrier=\R_+$, where $\zeta$ is a
  non-negative random variable, so that $\theta(K)=\Prob{\zeta\leq
    \sup K}$ is the capacity functional of $\Xi$. 
   The corresponding
  CRSM is given by $X(K)=\eta(\sup K)$ for the increasing max-stable
  process
  \begin{align*}
    \eta(t)=\bigvee_{i\geq1} \Gamma_i^{-1} \ind_{\zeta_i\leq t},\qquad t\geq0,
  \end{align*}
  where $\{\zeta_i\}$ are i.i.d.\ copies of $\zeta$.
\end{example}

\begin{example}
  Assume that $\carrier=\R^d$ and let $\Xi=\xi+M$, where $\xi$ is a random
  vector and $M$ is a deterministic compact set. The CRSM
  constructed by \eqref{eq:lepage-tm-finite} using i.i.d.\ copies of
  $\Xi$ can be obtained as $X(K)=Z(K+\check{M})$, where $Z$ is a
  completely random sup-measure with the control measure being the
  distribution of $\xi$ and $K+\check{M}=\{x - y : x \in K, y \in
  M\}$. The extremal coefficient functional of $X$ is
  $\theta(K)=\Prob{\xi\in K+\check{M}}$. 
\end{example}

\begin{example}
  \label{ex:perimeter}
  Let $\theta(K)$ be the perimeter of a convex set $K$ in
  $\carrier=\R^2$. The corresponding measure $\nu$ on $\sF'$ such that
  $\nu(\sF_K)=\theta(K)$ is the Haar measure on the affine
  Grassmannian $A(1,2)$ that consists of all lines in $\R^2$, see
  e.g.\ \cite[p.~582]{sch:weil08}. The measure $\lambda\otimes\nu$
  defines a stationary and isotropic marked line process
  $\{(t_i,L_i)\}$ on $\R_+\times A(1,2)$, see
  \cite[p.~124]{sch:weil08}. The LePage series
  \eqref{eq:rep-tm-sup-pp} defines a CRSM $X$ such that $X(K)$ equals
  the maximum of $t_i^{-1}$ for all lines $L_i$ that hit $K$.
\end{example}

\begin{example}
  \label{ex:lacaux}
  Let $\Xi$ be a random closed subset of $\carrier=\R_+$ with
  distribution $\mu$. For $\beta\in (0,1)$, define
  \begin{align*}
    \theta(K)=\int_0^\infty \Prob{\Xi+v \cap K\neq\emptyset}
    \beta v^{\beta-1}d v\,.
  \end{align*}
  The LePage representation of the corresponding CRSM $X$
  turns into 
  \begin{align*}
    X(K)\eqd \bigvee_{i\geq1}t_i^{-1} \ind_{\Xi_i+v_i\cap
      K\neq\emptyset},\qquad K\in\sK,
  \end{align*}
  where $\{(t_i,v_i,\Xi_i)\}$ is the Poisson process on
  $\R_+\times\R_+\times\sF'$ with the intensity $dt\beta
  v^{\beta-1}dvd\mu$.  This sup-measure $X$ is the central object in
  \cite{lac:sam15} for $\Xi$ being the range of a stable subordinator
  of order $(1-\beta)$. Then $\Xi$ coincides in distribution with
  $s\Xi$ for all $s>0$, so that $\theta(sK)=s^\beta \theta(K)$ for all
  $K\in\sK$ and $s>0$. Furthermore, $\theta(K+s)=\theta(K)$ meaning
  that $X$ is stationary. This CRSM $X$ is the sup-vague
  limit of appropriately rescaled random sup-measures arising from a
  stationary symmetric $\alpha$-stable sequence (here $\alpha=1$)
  whose dynamics is
  driven by a Markov chain with regularly varying first entrance time.
\end{example}

\begin{example}
  Let $\theta$ be the capacity functional of the random set $\Xi$
  being the path of the standard Brownian motion $W_t$, $t\geq0$, in
  $\R^d$ for $d\geq 3$ that starts at zero. The corresponding CRSM is
  constructed by \eqref{eq:lepage-tm-finite} and has the tail
  dependence functional $\ell(f)=\E\sup_{t\geq 0} f(W_t)$ for
  $f\in\USC$. 
\end{example}

\begin{example}
  \label{ex:C-additivity}
  Assume that $\carrier=\R^d$.  The measure $\nu$ on $\sF'$ related to
  the extremal coefficient functional by \eqref{eq:nu-theta-1} is
  supported by convex sets if and only if $\theta$ is additive on
  convex sets meaning that
  \begin{align*}
    \theta(K_1\cup K_2)+\theta(K_1\cap K_2)
    =\theta(K_1)+\theta(K_2)
  \end{align*}
  for all convex $K_1,K_2$ such that $K_1\cup K_2$ is also convex, see
  \cite[Th.~5.1.1]{mat75}. This property is also known under the name
  of \emph{C-additivity}, such functional $\theta$ is also called a
  \emph{valuation}, see \cite[Ch.~6]{schn2}. Assuming that $\theta$ is
  monotone and invariant for rigid motions (equivalently $X$ is
  stationary and isotropic), the Hadwiger theorem
  \cite[Th.~4.2.7]{schn2} establishes that 
  \begin{align}
    \label{eq:hadwiger}
    \theta(K)=\sum_{i=0}^d a_i V_i(K)
  \end{align}
  for all convex compact $K$, where $a_1,\dots,a_d\geq0$ and
  $V_0(K),\dots,V_d(K)$ are intrinsic volumes of $K$.

  The functional $a_i V_i(K)$ defines a Poisson process of intensity
  $a_i H_{d-i}$, where $H_{d-i}$ is the normalised Haar measure
  on the affine Grassmannian $A(d-i,d)$ that consists of all
  $(d-i)$-dimensional affine subspaces of $\R^d$. Thus, $\theta$
  yields a measure $\nu$ on $\sF'$ that corresponds to a superposition
  of such processes on Grassmannians of varying dimension, see also
  \cite[Th.~5.4.2]{mat75}. The sets $F_i$ in \eqref{eq:rep-tm-sup-pp}
  are affine subspaces of $\R^d$.
\end{example}

\begin{example}
  Let $\theta(K)=c\E \lambda_d(\Xi+\check{K})$ for a constant $c>0$
  and a random compact set $\Xi$ with distribution $\nu$. The
  corresponding measure $\nu$ on $\sF'$ is translation invariant and
  so can be disintegrated into the product $\lambda_d\otimes
  c\nu$. Then the LePage series from \eqref{eq:rep-tm-sup-pp} can be
  obtained from the Poisson process $\{(t_i,x_i,F_i)\}$ in
  $\R_+\times\R^d\times\sK$ with the intensity measure
  $\lambda\otimes\lambda_d\otimes\nu$, so that \eqref{eq:stat-crsm}
  yields
  a stationary CRSM with the extremal
  coefficient functional $\theta$.  In order to ensure that $X$ is
  finite on compact sets, it is required that the Lebesgue measure of
  the sum of $\Xi$ and the unit ball is integrable.  The
  sup-derivative $\xi$ of $X$ is the so-called \emph{storm process}
  generated by indicator functions, see \cite{lan:bac:bel11}. The random
  set $\Xi$ determines the shape of the random indicator function
  called a storm, while the points $x_i$ control the locations of
  storms whose strengths are then given by $t_i^{-1}$. In
  \cite{lan:bac:bel11} the random set $\Xi$ is chosen to be the Poisson
  polygon.

  The functional $\theta$ is additive on convex sets, and so admits
  the representation \eqref{eq:hadwiger}. For example, if
  $\Xi=B_\xi$ is the ball of random radius $\xi$ centred at the
  origin, then the Steiner formula from convex geometry
  \cite[p.~208]{schn2} yields that
  \begin{align*}
    \theta(K)=\sum_{i=0}^d V_i(K)\E\xi^{d-i}
  \end{align*}
  for each convex compact set $K$.  In particular, $X$ shares the same
  distribution with the CRSM from Example~\ref{ex:C-additivity} (with
  $a_i=\E\xi^{d-i}$) on any chain $K_1\subset K_2\subset \cdots\subset
  K_m$ of convex sets.
\end{example}

\begin{example} 
  Let $W$ be a centred Gaussian process on $\R^d$ with stationary
  increments, that is, the law of $\{W(x+y) - W(y)\}_{x \in \R^d}$
  does not depend on $y \in \R^d$. Specifying $W(0)=0$, the law of $W$ is
  uniquely determined by its variogram $\gamma(x,y)=\E (W(x)-W(y))^2$.
  The Brown--Resnick process associated to $\gamma$ \cite{kab:sch:dH09}
  is defined by the LePage series representation
  \begin{align*}
    \xi(x)=\bigvee_{i\geq 1} \Gamma_i^{-1} \exp\bigg(W_i(x)-\frac{\gamma(x,0)}{2}\bigg),
    \qquad x \in \R^d,
  \end{align*}
  where $\{W_i,i \geq 1\}$ are i.i.d.\ copies of $W$ independent
  of $\{\Gamma_i,i \geq 1\}$.  Its sup-integral $X(K)=\sup\{\xi(x)
  \,:\, x \in K\}$, $K \in \sK$, is a stationary
  max-stable random sup-measure.  However, it is not a CRSM, since its
  sup-derivative $\xi$ has continuous paths.  This also follows from
  the fact that $\xi(x_1),\dots,\xi(x_m)$ follows multivariate
  H\"usler-Reiss distributions \cite{hus:rei89} that are not
  spectrally discrete and so do not correspond to TM random vectors, cf.\
  Example~\ref{ex:tm-vectors}. This has been illustrated in
  \cite[Fig.~3]{str:sch15} by plotting the dependency sets (which are
  a finite-dimensional analogue of the sets of measures $\bM$ in
  \eqref{eq:ell-dual-measures}) and which are not polytopes.
\end{example}

{
\vspace{5mm}
\section*{Acknowledgements}
\label{sec:acknowledgements}
The authors are grateful to Zakhar Kabluchko, Martin Schla\-ther and Yizao Wang for discussions concerning this work and to the anonymous referees and the AE for critical remarks that led to numerous improvements.
}

\vspace{5mm}
\bibliographystyle{plainnat} 

{

}

\end{document}